\newtheorem{theorem}{Theorem}[section]
\newtheorem{lemma}[theorem]{Lemma}
\newtheorem{proposition}[theorem]{Proposition}
\newtheorem{corollary}[theorem]{Corollary}
\theoremstyle{definition}
\newtheorem{definition}[theorem]{Definition}
\newtheorem{notation}[theorem]{Notation}
\newtheorem{remark}[theorem]{Remark}
\newcommand{\cA}{ {\cal A} }
\newcommand{\bB}{ {\mathbb B} }
\newcommand{\cB}{ {\cal B} }
\newcommand{\bC}{ {\mathbb C} }
\newcommand{\cD}{ {\cal D} }
\newcommand{\cF}{ {\cal F} }
\newcommand{\fF}{ {\mathfrak F} }
\newcommand{\cH}{ {\cal H} }
\newcommand{\cK}{ {\cal K} }
\newcommand{\cL}{ {\cal L} }
\newcommand{\cM}{ {\cal M} }
\newcommand{\fM}{ {\mathfrak M} }
\newcommand{\bN}{ {\mathbb N} }
\newcommand{\bR}{ {\mathbb R} }
\newcommand{\cR}{{\cal R}}
\newcommand{\fV}{ {\mathfrak V} }
\newcommand{\bZ}{ {\mathbb Z} }
\newcommand{\dalg}{ \cD_{\mathrm{alg}} }
\newcommand{\ncdirac}{ \delta }
\newcommand{\ncserk}{ \bC_0 \langle \langle z_1, \ldots ,
                           z_k \rangle \rangle }
\newcommand{\cf}{ \mbox{Cf} }
\newcommand{\geqgeq}{ \gg }
\newcommand{\Int}{ \mbox{Int} }
\newcommand{\leqleq}{ \ll }
\newcommand{\oo}{ \mbox{o} }
\newcommand{\opi}{ {\mbox{o}}_{\pi} }
\newcommand{\pvac}{ P_{\Omega} }
\begin{document}

$\ $

\begin{center}
{\bf\Large Multi-variable subordination distributions for}

\vspace{6pt}

{\bf\Large free additive convolution}

\vspace{20pt}

{\large Alexandru Nica
\footnote{Research supported by a Discovery Grant from 
          NSERC, Canada.}    }

\vspace{10pt}

\end{center}

\begin{abstract}
Let $k$ be a positive integer and let $\cD_c (k)$ denote the 
space of joint distributions for $k$-tuples of selfadjoint elements 
in $C^*$-probability space. The paper studies the concept of 
``subordination distribution of $\mu \boxplus \nu$ with respect to 
$\nu$'' for $\mu , \nu \in \cD_c (k)$, where $\boxplus$ is the 
operation of free additive convolution on $\cD_c (k)$. The main 
tools used in this study are combinatorial properties of 
$R$-transforms for joint distributions and a related operator 
model, with operators acting on the full Fock space. 

Multi-variable subordination turns out to have nice relations to a 
process of evolution towards $\boxplus$-infinite divisibility on 
$\cD_c (k)$ that was recently found by Belinschi and Nica
(arXiv:0711.3787). Most notably, one gets better insight into a 
connection which this process was known to have with free Brownian 
motion.
\end{abstract}

\vspace{6pt}

\begin{center}
{\bf\large 1. Introduction and statements of results}
\end{center}
\setcounter{section}{1}

\noindent
The free additive convolution $\boxplus$ is a binary operation 
on the space of probability distributions on $\bR$, reflecting the 
addition operation for free random variables in a non-commutative
probability space. A significant fact in its theory (see \cite{V93}, 
\cite{B98}, \cite{V00}) is that the Cauchy transform of the 
distribution $\mu \boxplus \nu$ is subordinated to the Cauchy 
transforms of $\mu$ and of $\nu$, as analytic functions on the 
upper half-plane $\bC^{+}$. Thus (choosing for instance to discuss
subordination with respect to $\nu$) one has an analytic 
subordination function $\omega : \bC^{+} \to \bC^{+}$ such that 
\[
G_{\mu \boxplus \nu} (z) = G_{\nu} ( \, \omega (z) \, ), \ \ 
\forall \, z \in \bC^{+},
\]
where $G_{\mu \boxplus \nu}$ and $G_{\nu}$ are the Cauchy 
transforms of $\mu \boxplus \nu$ and of $\nu$, respectively.
Moreover, the subordination function $\omega$ can be 
identified as the reciprocal Cauchy transform of a uniquely 
determined probability distribution $\sigma$ on $\bR$. Following
\cite{L07}, this $\sigma$ will be denoted as ``$\mu \boxright \nu$''.
The name used in \cite{L07} for $\sigma = \mu \boxright \nu$ is 
``$s$-free additive convolution of $\mu$ and $\nu$'', in relation
to a suitably tailored concept of ``$s$-freeness'' that is also
introduced in \cite{L07}. Since $s$-freeness is only
marginally addressed in the present paper, $\mu \boxright \nu$ will 
just be called here the {\em subordination distribution} 
of $\mu \boxplus \nu$ with respect to $\nu$.

The goal of the present paper is to introduce and study the 
analogue for $\mu \boxright \nu$ in a multi-variable framework 
where $\mu, \nu$ become joint distributions of $k$-tuples of 
selfadjoint elements in a $C^*$-probability space. The 
particular case $k=1$ corresponds of course to the framework 
of probability distributions on $\bR$ as discussed above, with 
$\mu, \nu$ compactly supported. The main tool used in the paper 
is the $R$-transform for joint distributions. In particular, the 
$k$-variable version of $\mu \boxright \nu$ is introduced in 
Definition \ref{def:1.1} below via an extension of the formula 
which describes $R_{\mu \boxright \nu}$ in terms of $R_{\mu}$ 
and $R_{\nu}$ in the case $k=1$. (The 1-variable motivation 
behind Definition \ref{def:1.1} is presented in Section 2A of 
the paper.)

It is convenient to write the definition for the $k$-variable 
version of $\mu \boxright \nu$ by allowing $\mu$ and $\nu$ 
to be any linear functionals on 
$\bC \langle X_1, \ldots , X_k \rangle$ (the algebra of 
polynomials in non-commuting indeterminates $X_1, \ldots , X_k$)
such that $\mu (1) = \nu (1) = 1$. The set of all such ``purely
algebraic'' distributions will be denoted by $\dalg (k)$. The 
main interest of the paper is in the smaller set of
``non-commutative $C^*$-distributions with compact support''
\[
\cD_c (k) := \left\{
\mu \in \dalg (k) \
\begin{array}{cl}
\vline & \mbox{$\mu$ can appear as joint 
               distribution for a $k$-tuple}            \\
\vline & \mbox{of selfadjoint elements in a 
               $C^*$-probability space}
\end{array}  \right\} .
\]
But in order to define $\boxright$ on $\cD_c (k)$ it comes 
in handy to first define it as a binary operation on $\dalg (k)$, 
and then prove that $\mu \boxright \nu \in \cD_c (k)$ whenever 
$\mu , \nu \in \cD_c (k)$. 

In the next definition and throughout the paper, $k$ is a 
positive integer denoting ``the number of variables'' that one 
is working with.

\vspace{6pt}

\begin{definition}  \label{def:1.1}
Let $\mu, \nu$ be distributions in $\dalg (k)$. The 
{\em subordination distribution} of $\mu \boxplus \nu$ with respect 
to $\nu$ is the distribution $\mu \boxright \nu \in \dalg (k)$ 
uniquely determined by the requirement that its $R$-transform is
\begin{equation}  \label{eqn:1.11}
R_{\mu \boxright \nu}  = 
R_{\mu} \Bigl( \, z_1 (1+M_{\nu}), \ldots , z_k (1+M_{\nu}) 
\, \Bigr) \cdot \bigl( 1+M_{\nu} \bigr)^{-1}.
\end{equation}
In (\ref{eqn:1.11}) $M_{\nu}$ is the moment series of $\nu$ and 
$( 1+M_{\nu} )^{-1}$ is the inverse of $1 + M_{\nu}$ with respect 
to multiplication, in the algebra 
$\bC \langle \langle z_1, \ldots , z_k \rangle \rangle$ of 
power series in the noncommuting indeterminates $z_1, \ldots , z_k$.
(A more detailed review of the notations used here can be found in 
Section 2C of the paper.)
\end{definition}

\begin{remark}  \label{rem:1.2}
$1^o$ From Equation (\ref{eqn:1.11}) it is clear that the 
$R$-transform of $\mu \boxright \nu$ depends linearly on the one 
of $\mu$. Since the $R$-transform linearizes $\boxplus$, this 
amounts to a form of ``$\boxplus$-linearity'' in the way how 
$\mu \boxright \nu$ depends on $\mu$. More precisely one has
\begin{equation}      \label{eqn:1.21}
( \mu_1 \boxplus \mu_2 ) \boxright \nu =
( \mu_1 \boxright \nu ) \boxplus ( \mu_2 \boxright \nu ),
\ \ \forall \, \mu_1, \mu_2, \nu \in \dalg (k),
\end{equation}
or, when looking at $\boxplus$-convolution powers,
\begin{equation}      \label{eqn:1.22}
( \mu^{\boxplus t} ) \boxright \nu =
( \mu \boxright \nu )^{\boxplus t}, 
\ \ \forall \, \mu, \nu \in \dalg (k), \ \forall \, t>0.
\end{equation}

\vspace{6pt}

$2^o$ The series $R_{\mu} \Bigl( \, z_1 (1+M_{\nu}), \ldots , 
z_k (1+M_{\nu}) \, \Bigr)$ appearing in (\ref{eqn:1.11}) bears a 
resemblance to a well-known ``functional equation for the 
$R$-transform'' (see Lecture 16 of \cite{NS06}), which says that 
\[
R_{\mu} \Bigl( \, z_1 (1+M_{\mu}), \ldots , 
z_k (1+M_{\mu}) \, \Bigr) = M_{\mu}, \ \ \forall \, 
\mu \in \dalg (k).
\]
One can actually invoke this functional equation in the 
particular case of Definition \ref{def:1.1} when $\nu = \mu$, to
obtain that 
\begin{equation}     \label{eqn:1.225}
R_{\mu \boxright \mu} = M_{\mu} \cdot (1+M_{\mu})^{-1}, \ \ 
\mu \in \dalg (k).
\end{equation}

The series $M_{\mu} \cdot (1+ M_{\mu})^{-1}$ is called the 
{\em $\eta$-series} of $\mu$, and plays an important role in the
study of connections between free and Boolean probability. In
particular, the relation between $R$-transforms and $\eta$-series
yields a special bijection $\bB : \dalg (k) \to \dalg (k)$, defined 
as follows: for every $\mu \in \dalg (k)$, $\bB ( \mu )$ is the 
unique distribution in $\dalg (k)$ which has 
\begin{equation}     \label{eqn:1.226}
R_{ \bB ( \mu ) } = \eta_{\mu}.
\end{equation}
$\bB$ is called the {\em Boolean Bercovici-Pata bijection} (first put 
into evidence in the 1-variable case in \cite{BP99}, then extended to
multi-variable framework in \cite{BN08}). This bijection has the 
important property that it carries $\cD_c (k)$ into itself and that 
$\bB ( \, \cD_c (k) \, )$ is precisely the set of 
distributions in $\cD_c (k)$ which are infinitely divisible with 
respect to $\boxplus$ (cf. Theorem 1 in \cite{BN08}).

By comparing (\ref{eqn:1.225}) to (\ref{eqn:1.226}), one draws 
the conclusion that 
\begin{equation}   \label{eqn:1.23}  
\mu \boxright \mu  =  \bB ( \mu ), \ \ \forall \, 
\mu \in \dalg (k).
\end{equation}
Equation (\ref{eqn:1.23}) can be generalized to a nice formula 
describing $\mu_1 \boxright \mu_2$ in the case when both $\mu_1$ 
and $\mu_2$ are $\boxplus$-convolution powers of the same $\mu$; 
see Proposition \ref{prop:5.3} below.

\vspace{6pt}

$3^o$ One can rewrite Equation (\ref{eqn:1.11}) as 
\begin{equation}  \label{eqn:1.24}
R_{\mu \boxright \nu}  \cdot \bigl( 1+M_{\nu} \bigr) =
R_{\mu} \Bigl( \, z_1 (1+M_{\nu}), \ldots , z_k (1+M_{\nu}) 
\, \Bigr) ,
\end{equation}
and then one can equate coefficients in the series on the two sides
of (\ref{eqn:1.24}), in order to obtain an explicit combinatorial 
formula for the coefficients of $R_{\mu \boxright \nu}$. This 
in turn can be used to obtain an explicit formula for the 
moments of $\mu \boxright \nu$, which is stated next. In Theorem
\ref{thm:1.3}, $NC(n)$ is the set of non-crossing partitions of 
$\{ 1, \ldots , n \}$ (cf. review of $NC(n)$ terminology 
in Section 2B of the paper). The notation 
``$( i_1, \ldots , i_n ) \mid V$'' stands for
``$( i_{v(1)}, \ldots , i_{v(p)} )$'', where 
$V = \{ v(1), \ldots , v(p) \}$ is a non-empty subset of 
$\{ 1, \ldots , n \}$ (listed with $v(1) < \cdots < v(p)$) and
$i_1, \ldots , i_n$ are some indices in $\{ 1, \ldots , k \}$.
\end{remark}

\begin{theorem}  \label{thm:1.3}
Let $\mu, \nu$ be distributions in $\dalg (k)$. For every $n \geq 1$
and $1 \leq i_1, \ldots , i_n \leq k$ let us denote the coefficients
of $z_{i_1} \cdots z_{i_n}$ in the series $R_{\mu}$ and $R_{\nu}$ 
by $\alpha_{(i_1, \ldots , i_n)}$ and $\beta_{(i_1, \ldots , i_n)}$,
respectively. Then for every $n \geq 1$ and 
$1 \leq i_1, \ldots , i_n \leq k$ one has
\begin{equation}  \label{eqn:1.31}
( \mu \boxright \nu ) ( X_{i_1} \cdots X_{i_n} ) = 
\end{equation}
\[
\sum_{\pi \in NC(n)} \ 
\Bigl( \, \prod_{  \begin{array}{c}
{\scriptstyle V \ outer }                 \\
{\scriptstyle block \ of \ \pi}
\end{array}  } \ \ 
\alpha_{ ( i_1, \ldots , i_n ) |V } \, \Bigr) \cdot
\Bigl( \, \prod_{  \begin{array}{c}
{\scriptstyle W \ inner }                 \\
{\scriptstyle block \ of \ \pi}
\end{array}  } \ \ 
\alpha_{ ( i_1, \ldots , i_n ) |W } +
\beta_{ ( i_1, \ldots , i_n ) |W } \, \Bigr) .
\]
\end{theorem}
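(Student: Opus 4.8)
The plan is to pass through the free cumulants of $\mu \boxright \nu$. Write $\gamma_{(i_1,\ldots,i_n)}$ for the coefficient of $z_{i_1}\cdots z_{i_n}$ in $R_{\mu \boxright \nu}$; by the multivariable moment--cumulant formula expressing moments through the coefficients of the $R$-transform (cf.\ \cite{NS06}), we have $(\mu \boxright \nu)(X_{i_1}\cdots X_{i_n}) = \sum_{\sigma \in NC(n)} \prod_{B \in \sigma} \gamma_{(i_1,\ldots,i_n)|B}$, so it suffices to (i) extract a closed combinatorial formula for the $\gamma$'s from the defining relation (\ref{eqn:1.24}), and then (ii) re-expand this double sum over non-crossing partitions into the single sum (\ref{eqn:1.31}).

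For part (i) I would follow the recipe in Remark \ref{rem:1.2}, item $3^o$, and equate coefficients of $z_{i_1}\cdots z_{i_n}$ in (\ref{eqn:1.24}). The left side $R_{\mu \boxright \nu}\cdot(1+M_\nu)$ contributes $\gamma_{(i_1,\ldots,i_n)} + \sum_{j=1}^{n-1}\gamma_{(i_1,\ldots,i_j)}\,\nu(X_{i_{j+1}}\cdots X_{i_n})$. On the right, each monomial of $R_\mu\bigl(z_1(1+M_\nu),\ldots,z_k(1+M_\nu)\bigr)$ singles out a ``marked'' subword $(i_{p_1},\ldots,i_{p_m})$ with $p_1=1$, carrying the factor $\alpha_{(i_1,\ldots,i_n)|\{p_1,\ldots,p_m\}}$, while the gaps between and after the marked positions are filled by moments of $\nu$; replacing each such $\nu$-moment by its own moment--cumulant expansion in the $\beta$'s turns the right-hand coefficient into $\sum_{\pi \in NC(n)} \alpha_{(i_1,\ldots,i_n)|P}\prod_{W\neq P}\beta_{(i_1,\ldots,i_n)|W}$, where $P$ is the block of $\pi$ containing $1$. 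Grouping this sum according to $\max P$ and recognizing the part to the right of $\max P$ as a $\nu$-moment matches it against the left side; an induction on $n$ then gives
\[
\gamma_{(i_1,\ldots,i_n)} = \sum_{\substack{\pi \in NC(n)\\ 1,n \text{ in the same block}}} \alpha_{(i_1,\ldots,i_n)|V_0}\ \prod_{\substack{W \in \pi\\ W \neq V_0}}\beta_{(i_1,\ldots,i_n)|W},
\]
with $V_0$ the (necessarily outer) block containing both $1$ and $n$.

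For part (ii) I substitute this into $\sum_{\sigma}\prod_B \gamma_{(i_1,\ldots,i_n)|B}$. Each factor $\gamma_{(i_1,\ldots,i_n)|B}$ is itself a sum over non-crossing partitions of $B$ in which the block through $\min B$ and $\max B$ carries $\alpha$ and every other block carries $\beta$; merging $\sigma$ with these per-block partitions produces a single $\pi \in NC(n)$ with $\pi \leq \sigma$, and the standard fact that refining each block of $\sigma$ by a non-crossing partition again yields a non-crossing partition (and that every such $\pi$ arises uniquely) makes this a bijection. It then remains, for fixed $\pi$, to sum over admissible $\sigma \geq \pi$. The rule is ``nearest enclosing $\alpha$-block'': every block of $\pi$ is either the spanning ($\alpha$-weighted) block of its $\sigma$-block or is absorbed as a nested ($\beta$-weighted) block, and a block may be absorbed only when it is enclosed by some $\alpha$-block. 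An outer block of $\pi$ is enclosed by nothing, so it is forced to be $\alpha$-weighted, whereas each inner block is enclosed by an outermost (hence $\alpha$) block and is therefore free to take either role; summing over the two roles replaces its weight by $\alpha + \beta$. This converts $\sum_{\text{admissible }\sigma \geq \pi}(\text{weight})$ into $\bigl(\prod_{V\text{ outer}}\alpha_{(i_1,\ldots,i_n)|V}\bigr)\bigl(\prod_{W\text{ inner}}(\alpha+\beta)_{(i_1,\ldots,i_n)|W}\bigr)$, and summing over $\pi$ gives (\ref{eqn:1.31}).

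The main obstacle is the reorganization in part (ii): one must verify that the ``nearest enclosing $\alpha$-block'' rule is an honest bijection between admissible $\sigma \geq \pi$ and the colorings of the inner blocks of $\pi$ by $\{\alpha,\beta\}$. The delicate points are that the partition $\sigma$ reconstructed from a coloring is again non-crossing, and that the outer blocks are always forced into the $\alpha$-class -- and it is precisely these two facts that produce the outer/inner dichotomy of (\ref{eqn:1.31}). By comparison, part (i) is routine bookkeeping with (\ref{eqn:1.24}), the only care being the correct insertion of the moment--cumulant expansions of the $\nu$-moments.
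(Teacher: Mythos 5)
Your proposal is correct, but it takes a genuinely different route from the paper's. Your part (i) coincides with the paper's own first step (Lemma 3.2 together with Proposition 3.3): the same induction on $n$ obtained by equating coefficients in (\ref{eqn:1.24}), followed by expanding the $\nu$-moments sitting in the gaps via the free moment--cumulant formula, yielding that $\cf_{(i_1,\ldots,i_n)}(R_{\mu\boxright\nu})$ is the sum over $\pi$ with $1,n$ in one block of $\alpha$ on that block times $\beta$ on the rest. The divergence is in part (ii): you feed these free cumulants directly into the free moment--cumulant formula (\ref{eqn:2.141}) and reorganize the resulting double sum, whereas the paper never does this -- it passes through Boolean cumulants instead, extending Proposition 3.3 to generalized coefficients (Lemma 3.6), computing $\eta_{\mu\boxright\nu}$ (Proposition 3.7, which is where Proposition 2.8 is used), and then reading off the moments from the Boolean moment--cumulant formula (\ref{eqn:2.142}), where the sum over interval partitions $\rho \geqgeq \pi$ collapses to a single term. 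The price of your shortcut is exactly the obstacle you flag: after identifying your ``admissible $\sigma \geq \pi$'' with the paper's relation $\pi \leqleq \sigma$, you need the bijection between $\{\sigma \in NC(n) : \pi \leqleq \sigma\}$ and the subsets of blocks of $\pi$ containing \emph{all} outer blocks, for an \emph{arbitrary} $\pi$, while Proposition 2.8 is stated only for $\pi \leqleq 1_n$. This stronger statement does follow from Proposition 2.8: if $\hat{\rho}$ is the unique interval partition with $\hat{\rho} \geqgeq \pi$ (Remark 2.9), then every $\sigma \geqgeq \pi$ satisfies $\sigma \leq \hat{\rho}$ (each block $B$ of $\sigma$ has $\min(B), \max(B)$ in a common block of $\pi$, hence in a common interval block of $\hat{\rho}$, hence $B$ lies in that interval), so by the factorization (\ref{eqn:2.51}) the set $\{\sigma : \sigma \geqgeq \pi\}$ splits as a product over the blocks $I$ of $\hat{\rho}$, the restriction of $\pi$ to each $I$ has a unique outer block, inner/outer status of blocks of $\pi$ is unchanged under this restriction, and Proposition 2.8 applies blockwise; with that lemma in hand your ``nearest enclosing $\alpha$-block'' analysis is exactly right and the sum over $\sigma \geqgeq \pi$ factors into $\prod_{V \, \mathrm{outer}} \alpha \cdot \prod_{W \, \mathrm{inner}} (\alpha + \beta)$. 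As for what each approach buys: yours is shorter and avoids $\eta$-series entirely; the paper's Boolean detour produces along the way the explicit formula for $\eta_{\mu\boxright\nu}$ and the identity $\bB(\mu \boxright \nu) = \mu \boxright (\mu \boxplus \nu)$ of Remark 3.8, which are reused in Sections 5 and 6 (Propositions 5.1, 6.2), so its extra machinery is amortized over the rest of the paper.
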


Based on the moment formula from Theorem \ref{thm:1.3} one can
find an ``operator model on the full Fock space'' for 
$\boxright$. This is a recipe which starts from the data stored in 
the $R$-transforms $R_{\mu}$ and $R_{\nu}$, and uses creation and 
annihilation operators on the full Fock space over $\bC^{2k}$ in 
order to produce a $k$-tuple of operators with distribution 
$\mu \boxright \nu$. The precise description of how this works
appears in Theorem \ref{thm:4.4} of the paper. Once the full 
Fock space model is in place it is easy to see that one can in 
fact upgrade it to a more general operator model for 
$\boxright$, not making specific reference to the full Fock space,
and described as follows.

\begin{theorem}   \label{thm:1.4} 
Let $\cH$ be a Hilbert space, let $\Omega$ be a unit-vector in 
$\cH$, and let $\varphi$ be the vector-state defined by $\Omega$
on $B( \cH )$. Suppose that 
$A_1, \ldots , A_k, B_1, \ldots , B_k \in B( \cH )$ are such that
$\{ A_1, \ldots , A_k \}$ is free from $\{ B_1, \ldots , B_k \}$ 
in $( \, B( \cH ), \varphi \, )$, and let $\mu , \nu$ denote the 
joint distributions of the $k$-tuples $A_1, \ldots , A_k$ and 
respectively $B_1, \ldots , B_k$. Let moreover $P \in B( \cH )$ 
denote the orthogonal projection onto the 1-dimensional subspace 
$\bC \Omega$ of $\cH$, and consider the operators
\begin{equation}    \label{eqn:1.41}
C_i := A_i + (1-P) B_i \, (1-P) \in B( \cH ), \ \ 
1 \leq i \leq k.
\end{equation}
Then the joint distribution of $C_1, \ldots , C_k$ with respect
to $\varphi$ is equal to $\mu \boxright \nu$.
\end{theorem}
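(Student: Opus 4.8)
The plan is to deduce Theorem \ref{thm:1.4} from the full Fock space model of Theorem \ref{thm:4.4} by an abstract ``universality'' argument. The point is that the joint distribution of $C_1, \ldots , C_k$ with respect to $\varphi$ depends only on the pair $(\mu , \nu)$ and on the freeness of $\{ A_1, \ldots , A_k \}$ from $\{ B_1, \ldots , B_k \}$, and not on the particular Hilbert space $\cH$ or unit-vector $\Omega$ used to carry this data. Granting such a universality statement, the theorem follows immediately: the construction underlying Theorem \ref{thm:4.4} already supplies one concrete instance of the present set-up (operators on the full Fock space over $\bC^{2k}$, with the $A_i$ distributed according to $\mu$, the $B_i$ distributed according to $\nu$, the two $k$-tuples free, $P$ the projection onto the vacuum, and $C_i = A_i + (1-P) B_i (1-P)$) in which the joint distribution of the $C_i$ is known to be $\mu \boxright \nu$; universality then forces the same conclusion in the general set-up of Theorem \ref{thm:1.4}.

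First I would record the single algebraic fact that drives everything: since $P$ is the rank-one projection onto $\bC \Omega$, one has $P \xi = \langle \Omega , \xi \rangle \, \Omega$ for all $\xi \in \cH$, and consequently
\[
\varphi ( R \, P \, S ) = \varphi (R) \, \varphi (S), \qquad \forall \, R, S \in B( \cH ) .
\]
Indeed $P S \Omega = \varphi (S) \, \Omega$, so pairing against $R^{*} \Omega$ gives $\varphi (RPS) = \varphi (S) \, \langle \Omega , R \Omega \rangle = \varphi (R) \, \varphi (S)$. Next I would prove the universality statement itself. Expanding each factor $C_{i_j} = A_{i_j} + (1-P) B_{i_j} (1-P)$, and then expanding every occurrence of $1-P$, writes $\varphi ( C_{i_1} \cdots C_{i_n} )$ as a finite signed sum of quantities $\varphi (W)$, where each $W$ is a word in the letters $A_1, \ldots , A_k, B_1, \ldots , B_k$ and $P$. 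Applying the displayed identity repeatedly at each occurrence of $P$ (allowing $R$ or $S$ to be $1$ so as to absorb a leading or trailing $P$) factors every such $\varphi (W)$ into a product of values $\varphi (W')$ taken on $P$-free words $W'$, i.e.\ on words in the $A_i$ and $B_i$ alone. Finally, because $\{ A_1, \ldots , A_k \}$ is free from $\{ B_1, \ldots , B_k \}$, the joint distribution of the $2k$-tuple $(A_1, \ldots , A_k, B_1, \ldots , B_k)$ is the free product of $\mu$ and $\nu$, so each $\varphi (W')$ is determined by $\mu$ and $\nu$. Hence every moment $\varphi ( C_{i_1} \cdots C_{i_n} )$ is a fixed universal expression in the moments of $\mu$ and of $\nu$, as claimed.

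The bulk of the work, and the only genuinely delicate point, is to confirm that the concrete operators produced in Theorem \ref{thm:4.4} really do fit the template of Theorem \ref{thm:1.4}: that the creation/annihilation operators on $\cF ( \bC^{2k} )$ split into an $A$-part of distribution $\mu$ and a $B$-part of distribution $\nu$, that these two parts are free, and that the combination realizing $\mu \boxright \nu$ is precisely $A_i + (1-P) B_i (1-P)$ with $P$ the vacuum projection. Once this identification is in place, no further computation is needed. One could instead bypass Theorem \ref{thm:4.4} and prove Theorem \ref{thm:1.4} directly, by carrying the expansion above all the way down and matching the resulting sum against the non-crossing-partition formula (\ref{eqn:1.31}) of Theorem \ref{thm:1.3}: there the constraint $(1-P)\Omega = 0$ at the two ends of each surviving word is what distinguishes outer blocks (which see only the cumulants $\alpha$ of $\mu$) from inner blocks (which see the cumulants $\alpha + \beta$ of $\mu \boxplus \nu$). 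I would prefer the reduction to Theorem \ref{thm:4.4}, since it isolates the single identity $\varphi (RPS) = \varphi (R) \varphi (S)$ and avoids re-deriving the combinatorics.
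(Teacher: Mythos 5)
Your key identity $\varphi (RPS) = \varphi (R)\, \varphi (S)$ and the universality argument built on it are correct, and they are essentially the paper's own mechanism: this identity is the $n=2$ case of Lemma \ref{lemma:4.7}, which the paper uses (inside Lemma \ref{lemma:4.8}, in the form of a convergence-in-distribution statement) for exactly the same purpose, namely to show that the joint distribution of $C_1, \ldots , C_k$ is determined by the pair $( \mu , \nu )$ alone. The genuine gap is in your reduction step. You assert that the construction of Theorem \ref{thm:4.4} ``already supplies one concrete instance'' of the set-up for the given pair $( \mu , \nu )$, but Theorem \ref{thm:4.4} carries the standing hypothesis that $R_{\mu}$ and $R_{\nu}$ are \emph{polynomials}; that hypothesis is what makes the element $T$ of Equation (\ref{eqn:4.42}) a finite sum, hence a bounded operator on the full Fock space. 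A general pair $\mu , \nu \in \cD_c (k)$ (already for $k=1$: most compactly supported measures on $\bR$) has non-polynomial $R$-transforms, and then there is no Fock-space instance available to feed into your universality statement. So, as written, your argument proves Theorem \ref{thm:1.4} only for distributions whose $R$-transforms are polynomials.

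Closing this gap is precisely what the rest of the paper's Section 4 does: it truncates the $R$-transforms to obtain $\mu_N , \nu_N$ as in (\ref{eqn:4.92}), applies Theorem \ref{thm:4.4} to those, and then passes to the limit $N \to \infty$ in the proof of Theorem \ref{thm:4.9}, using Lemma \ref{lemma:4.8} to see that adjoining the vacuum-projection does not spoil convergence in distribution, and Remark \ref{rem:3.11} for the continuity of $\boxright$ under limits in distribution. Within your framework there is an even quicker, limit-free patch which you should add: the universal expression you construct for $\varphi ( C_{i_1} \cdots C_{i_n} )$ involves only moments of $\mu$ and $\nu$ of order at most $n$ (each word surviving the expansion contains exactly $n$ letters from the $A$'s and $B$'s), and the same is true of the moment formula (\ref{eqn:1.31}) for $\mu \boxright \nu$, which involves only free cumulants of order at most $n$. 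Truncating $R_{\mu}$ and $R_{\nu}$ at degree $N \geq n$ changes neither of these finite families, by the moment-cumulant formula (\ref{eqn:2.141}); hence the equality of the two expressions for the truncated pair $( \mu_N , \nu_N )$ -- which your argument does establish, via Theorem \ref{thm:4.4} together with the identification recorded in Remark \ref{rem:4.45} -- forces the equality for $( \mu , \nu )$ itself. With this one additional step your proof is complete, and is in fact a slightly more elementary (purely algebraic, no limits) variant of the paper's argument.
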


\vspace{6pt}

Now, any given pair of distributions 
$\mu, \nu \in \cD_c (k)$ can be made to appear in the setting of
Theorem \ref{thm:1.4}, in such a way that the operators 
$A_1, \ldots , A_k$, 
$B_1, \ldots , B_k$ involved in the theorem are all selfadjoint. 
(This is done via a standard free product construction -- cf.
Remark \ref{rem:4.10} below.)
Since in this case the operators $C_1, \ldots , C_k$ 
from Equation (\ref{eqn:1.41}) are selfadjoint as well, one thus 
obtains the following corollary, giving the desired fact that 
$\boxright$ can be defined as a binary operation on $\cD_c (k)$.

\begin{corollary}  \label{cor:1.6}
If $\mu, \nu$ are in $\cD_c (k)$ then $\mu \boxright \nu$ belongs
to $\cD_c (k)$ as well.
\end{corollary}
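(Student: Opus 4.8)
The plan is to realize the abstract distributions $\mu, \nu \in \cD_c (k)$ by concrete selfadjoint operators so that the hypotheses of Theorem \ref{thm:1.4} are met, and then to observe that the operators $C_i$ produced by the theorem are automatically selfadjoint. First I would invoke the very definition of $\cD_c (k)$ to choose $C^*$-probability spaces $(\cA , \tau)$ and $(\cB , \rho)$ together with selfadjoint $k$-tuples $A_1, \ldots , A_k \in \cA$ having joint distribution $\mu$ and $B_1, \ldots , B_k \in \cB$ having joint distribution $\nu$.

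Next I would pass to the reduced free product $(\cA , \tau) * (\cB , \rho)$, i.e.\ the standard free product construction referred to in Remark \ref{rem:4.10}. This yields a single $C^*$-probability space inside which (copies of) the two tuples live, with $\{ A_1, \ldots , A_k \}$ free from $\{ B_1, \ldots , B_k \}$ and with the individual joint distributions $\mu$ and $\nu$ preserved. Applying the GNS construction to the free-product state then realizes this space inside $B( \cH )$ for a Hilbert space $\cH$, with the state implemented by a cyclic unit-vector $\Omega$, that is, as the vector-state $\varphi$. At this point all the data required by Theorem \ref{thm:1.4} are in place, with $P$ the orthogonal projection onto $\bC \Omega$.

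With the operators $C_i := A_i + (1-P) B_i (1-P)$ as in Equation (\ref{eqn:1.41}), the key observation is that $C_i^* = C_i$. Indeed $A_i^* = A_i$ and $B_i^* = B_i$ by construction, while $P$, being an orthogonal projection, satisfies $P^* = P$ and hence $(1-P)^* = 1-P$; taking adjoints in (\ref{eqn:1.41}) therefore returns $C_i$ unchanged. Thus $C_1, \ldots , C_k$ is a $k$-tuple of selfadjoint elements of the $C^*$-probability space $( B( \cH ), \varphi )$.

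Finally, Theorem \ref{thm:1.4} identifies the joint distribution of $C_1, \ldots , C_k$ with respect to $\varphi$ as $\mu \boxright \nu$. Since this tuple is selfadjoint and lives in a $C^*$-probability space, its joint distribution lies in $\cD_c (k)$ by definition, giving $\mu \boxright \nu \in \cD_c (k)$ as claimed. The entire content of the argument is carried by Theorem \ref{thm:1.4}; the only additional ingredient is the free product realization, which is completely standard, so there is no genuine obstacle to overcome here beyond recording the adjoint computation.
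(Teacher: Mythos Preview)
Your proposal is correct and follows essentially the same approach as the paper: realize $\mu$ and $\nu$ by selfadjoint $k$-tuples that are free with respect to a vector-state, apply Theorem~\ref{thm:1.4}, and observe that the resulting $C_i$ are selfadjoint. The only cosmetic difference is that the paper (in Remark~\ref{rem:4.10}) first applies GNS to each factor and then forms the free product of Hilbert spaces, whereas you take the reduced free product of $C^*$-probability spaces and then apply GNS; these are equivalent standard constructions and lead to the same conclusion.
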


\begin{remark}   \label{rem:1.5}
In the 1-variable framework, the study of $\boxright$ was 
started in \cite{L07}. That paper gives an operator model for 
$\mu \boxright \nu$ obtained via an ``$s$-free product'' 
construction for Hilbert spaces, and where $\mu \boxright \nu$ 
appears as the distribution of the sum of two ``$s$-free operators'' 
with distributions $\mu$ and $\nu$, respectively.
By using Theorem \ref{thm:1.4}, it is easy to find a $k$-variable 
analogue for this fact -- that is, one can make $\mu \boxright \nu$ 
appear as the distribution of the sum of two $s$-free $k$-tuples on an 
$s$-free product Hilbert space. The way how this is done is outlined
in Remark \ref{rem:4.11} below. 
\end{remark}

$\ $

The next part of the introduction (from Remark \ref{rem:1.7} to 
Proposition \ref{prop:1.10}) explains how $\boxright$ relates to
the work in \cite{BN09} concerning evolution towards 
$\boxplus$-infinite divisibility and its connection to the 
free Brownian motion.

\begin{remark}   \label{rem:1.7}
Here is a brief summary of relevant results from \cite{BN09}. One
considers a family of bijective transformations 
$\{ \bB_t \mid t \geq 0 \}$ of $\dalg (k)$ defined by 
\[
\bB_t ( \mu ) = \Bigl( \, \mu^{\boxplus (1+t)} \, 
                \Bigr)^{\uplus 1/(1+t)}, \ \ 
\forall \, t \geq 0, \ \forall \, \mu \in \dalg (k),
\] 
where the $\boxplus$-powers and $\uplus$-powers are taken in
connection to free and respectively Boolean convolution. The
transformations $\bB_t$ form a semigroup 
($\bB_{s+t} = \bB_s \circ \bB_t$, $\forall \, s,t \geq 0$), each 
of them carries $\cD_c (k)$ into itself, and at $t=1$ one has 
$\bB_1 = \bB$, the Boolean Bercovici-Pata bijection that was also 
encountered in Remark \ref{rem:1.2}.2. Thus for a fixed 
$\mu \in \cD_c (k)$ the process $\{ \bB_t ( \mu ) \mid t \geq 0 \}$ 
can be viewed as some kind of 
``evolution of $\mu$ towards $\boxplus$-infinite divisibility''
(since $\bB_t ( \mu )$ is infinitely divisible for all $t \geq 1$).

On the other hand let us recall that the free Brownian motion 
started at a distribution $\nu \in \cD_c (k)$ is the process
$\{ \nu \boxplus \gamma^{\boxplus t} \mid t \geq 0 \}$, where 
$\gamma \in \cD_c (k)$ is the joint distribution of a standard 
semicircular system (a free family of $k$ centered semicircular 
elements of variance 1). The paper \cite{BN09} puts into 
evidence a certain transformation 
$\Phi : \dalg (k) \to \dalg (k)$ which carries $\cD_c (k)$ into 
itself and has the property that 
\begin{equation}     \label{eqn:1.71}
\Phi ( \nu \boxplus \gamma^{\boxplus t} ) = \bB_t \bigl( \, 
\Phi ( \nu ) \, \bigr), \ \ \forall \, \nu \in \dalg (k), \
\forall \, t \geq 0.
\end{equation}
In other words, (\ref{eqn:1.71}) says that a relation of the form
``$\Phi ( \nu ) = \mu$'' is not affected when $\nu$ evolves under 
the free Brownian motion while $\mu$ evolves under the action of
the semigroup $\{ \bB_t \mid t \geq 0 \}$. The transformation 
$\Phi$ from \cite{BN09} turns out to be related to subordination 
distributions, as follows.
\end{remark}

\begin{theorem}   \label{thm:1.8}
For every distribution $\nu \in \dalg (k)$ one has that 
\begin{equation}    \label{eqn:1.81}
\gamma \boxright \nu = \bB \bigl( \, \Phi ( \nu ) \, \bigr),
\end{equation}
where $\gamma$ is as above (the joint distribution of a standard 
semicircular system) and $\bB$ is the Boolean Bercovici-Pata 
bijection.
\end{theorem}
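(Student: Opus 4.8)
The plan is to verify the identity (\ref{eqn:1.81}) by comparing the $R$-transforms of its two sides, since a distribution in $\dalg (k)$ is uniquely determined by its $R$-transform. First I would compute the left-hand side directly from Definition \ref{def:1.1}. The only input needed about $\gamma$ is that a standard semicircular system has all free cumulants zero except the second-order ones $\kappa_2 (X_i, X_j) = \delta_{ij}$, so that $R_{\gamma} = z_1^2 + \cdots + z_k^2$. Substituting $\mu = \gamma$ into (\ref{eqn:1.11}) then gives
\[
R_{\gamma \boxright \nu} = \Bigl( \sum_{i=1}^k z_i (1+M_\nu) \, z_i (1+M_\nu) \Bigr) (1+M_\nu)^{-1} = \sum_{i=1}^k z_i \, (1+M_\nu) \, z_i ,
\]
the point being that exactly one factor of $(1+M_\nu)$ survives the cancellation against $(1+M_\nu)^{-1}$. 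This is a short formal-power-series manipulation and I expect no difficulty here.

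Next I would rewrite the right-hand side. By the defining property (\ref{eqn:1.226}) of the Boolean Bercovici--Pata bijection, $R_{\bB ( \Phi ( \nu ) )} = \eta_{\Phi ( \nu )} = M_{\Phi ( \nu )} \, (1 + M_{\Phi ( \nu )})^{-1}$. Hence, again by uniqueness of $R$-transforms, the theorem is equivalent to the single identity
\[
\eta_{\Phi ( \nu )} = \sum_{i=1}^k z_i \, (1+M_\nu) \, z_i .
\]
Everything therefore comes down to computing the $\eta$-series (equivalently, the Boolean cumulants) of $\Phi ( \nu )$ and checking that it has precisely this form.

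The decisive step is to supply the explicit description of $\Phi$ from \cite{BN09}. The only information about $\Phi$ recorded so far is the intertwining relation (\ref{eqn:1.71}), which is not by itself enough; what is needed is the closed formula that \cite{BN09} gives for the moment series (or $\eta$-series) of $\Phi ( \nu )$ in terms of $M_\nu$. Importing that formula and matching it against the displayed expression is where the real content lies, and I expect this to be the main obstacle: if \cite{BN09} records $\eta_{\Phi ( \nu )}$ directly then the identity is immediate, but if it records $M_{\Phi ( \nu )}$ or $R_{\Phi ( \nu )}$ instead, one must pass between these transforms using $\eta_\rho = M_\rho (1+M_\rho)^{-1}$ together with the functional equation $R_\rho \bigl( z_1 (1+M_\rho), \ldots , z_k (1+M_\rho) \bigr) = M_\rho$, and verify that the resulting series reduces exactly to $\sum_i z_i (1+M_\nu) z_i$. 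As a consistency check for the whole scheme one can take $\nu = \delta_0$ (with $M_{\delta_0} = 0$): the left-hand computation then gives $R_{\gamma \boxright \delta_0} = \sum_i z_i^2 = R_\gamma$, so $\gamma \boxright \delta_0 = \gamma$, while the right-hand side predicts $\gamma = \bB ( \Phi ( \delta_0 ) )$ with $\eta_{\Phi ( \delta_0 )} = \sum_i z_i^2$ --- consistent, since $\bB$ carries this $\eta$-series into the semicircular $R$-transform.
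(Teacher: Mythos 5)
Your proposal is correct, and at the structural level it is the same proof as the paper's: the paper establishes $R_{\gamma \boxright \nu} = \sum_{i=1}^k z_i (1+M_{\nu}) z_i$ (Proposition \ref{prop:5.4}) and then observes (Corollary \ref{cor:5.5}) that \cite{BN09} \emph{defines} $\Phi ( \nu )$ precisely by the prescription $\eta_{\Phi ( \nu )} = \sum_{i=1}^k z_i (1+M_{\nu}) z_i$ (Equation (\ref{eqn:5.81})), so that $R_{ \bB ( \Phi ( \nu ) ) } = \eta_{\Phi ( \nu )} = R_{\gamma \boxright \nu}$ and the theorem follows. In particular, the obstacle you flagged as the possible crux --- in what form \cite{BN09} records $\Phi$ --- dissolves in the most favourable way: the $\eta$-series is recorded directly, and it is exactly the series your computation produces, so no passage between transforms is needed. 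The one genuine difference is how the left-hand side is computed. You obtain $R_{\gamma \boxright \nu}$ by substituting $R_{\gamma} = z_1^2 + \cdots + z_k^2$ into the defining formula (\ref{eqn:1.11}) and cancelling one right-hand factor $(1+M_{\nu})$ against $(1+M_{\nu})^{-1}$; this one-line manipulation in $\ncserk$ is perfectly rigorous, since Definition \ref{def:1.1} is exactly that formula. The paper instead re-derives the identity combinatorially from Proposition \ref{prop:3.3}: since $R_{\gamma}$ has only the quadratic coefficients $\delta_{ij}$, in the sum over $\pi \leqleq 1_n$ the unique outer block (coloured by $R_{\gamma}$) must be the pair $\{1,n\}$ with $i_1 = i_n$, and the sum over the inner blocks reassembles the moment $\nu ( X_{i_2} \cdots X_{i_{n-1}} )$ via the moment-cumulant formula (\ref{eqn:2.141}). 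Your route is shorter and bypasses the Section 3 machinery entirely; the paper's route costs more but stays inside the cumulant formalism it has already built and uses elsewhere (e.g.\ in Proposition \ref{prop:5.1}), and it proves nothing more in this instance. Your consistency check at $\nu = \delta$ agrees with the paper's Equation (\ref{eqn:5.22}).
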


\begin{remark}   \label{rem:1.9}

$1^o$ Equation (\ref{eqn:1.81}) thus offers an alternative 
description for $\Phi$:
\begin{equation}    \label{eqn:1.91}
\Phi ( \nu ) = \bB^{-1} \bigl( \, \gamma \boxright \nu \, \bigr),
\ \ \nu \in \dalg (k).
\end{equation}
It is worth noting that the two main properties of $\Phi$ obtained 
in \cite{BN09} (formula (\ref{eqn:1.71}) and the fact that $\Phi$ 
maps $\cD_c (k)$ into itself) are very easy to derive by starting 
from this description and by invoking the suitable properties of 
subordination distributions; see Proposition \ref{prop:5.6}.

$2^o$ It is also worth noting that one has a simple explicit 
formula for how $\mu \boxright \nu$ itself evolves under the action 
of the $\bB_t$. This formula pops up when one compares the explicit 
descriptions for the free and the Boolean cumulants of 
$\mu \boxright \nu$ (see Remark \ref{rem:3.8}.1 and Proposition 
\ref{prop:5.1} below), and is described as follows.
\end{remark}

\begin{proposition}    \label{prop:1.10}
Let $\mu , \nu$ be two distributions in $\dalg (k)$. Then for 
every $t \geq 0$ one has:
\begin{equation}   \label{eqn:1.101}  
\bB_t ( \mu \boxright \nu ) = \mu \boxright 
\bigl( \mu^{\boxplus t} \boxplus \nu \bigr).
\end{equation}
\end{proposition}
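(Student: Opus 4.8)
The plan is to prove (\ref{eqn:1.101}) by matching $\eta$-series (Boolean cumulant series), which determine a distribution in $\dalg(k)$ just as well as the moments do. First I would peel off $\bB_t$ using only its definition together with the $\boxplus$-linearity already recorded in (\ref{eqn:1.22}). Since $\bB_t(\rho) = \bigl(\rho^{\boxplus(1+t)}\bigr)^{\uplus 1/(1+t)}$ and the $\eta$-series is additive under $\uplus$ (so $\eta_{\lambda^{\uplus s}} = s\,\eta_{\lambda}$), one has $\eta_{\bB_t(\rho)} = \frac{1}{1+t}\,\eta_{\rho^{\boxplus(1+t)}}$ for every $\rho \in \dalg(k)$. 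Taking $\rho = \mu\boxright\nu$ and invoking (\ref{eqn:1.22}) in the form $(\mu\boxright\nu)^{\boxplus(1+t)} = \mu^{\boxplus(1+t)}\boxright\nu$, the identity (\ref{eqn:1.101}) becomes equivalent to the single $\eta$-series identity
\[ \eta_{\mu^{\boxplus(1+t)}\boxright\nu} \;=\; (1+t)\,\eta_{\mu\boxright(\mu^{\boxplus t}\boxplus\nu)}. \]

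The substantive ingredient is then the $\eta$-series analogue of Theorem \ref{thm:1.3}. I would obtain it from the moment formula (\ref{eqn:1.31}) by Boolean moment–cumulant inversion: every $\pi \in NC(n)$ factors uniquely as a concatenation of its irreducible (interval) components, a block of $\pi$ is outer exactly when it is outer inside its own component, and hence the weight in (\ref{eqn:1.31}) is multiplicative along this factorization. Boolean inversion therefore retains only the irreducible $\pi$ (those with a single outer block), with the outer/inner weights unchanged, so that for any $\lambda,\xi \in \dalg(k)$ whose free cumulants attached to a block $V$ are $a_V$ and $b_V$ respectively, the coefficient of $z_{i_1}\cdots z_{i_n}$ in $\eta_{\lambda\boxright\xi}$ equals
\[ \sum_{\substack{\pi\in NC(n)\\ \pi\ \mathrm{irreducible}}} a_{O}\,\prod_{W\ \mathrm{inner}} \bigl(a_{W}+b_{W}\bigr), \]
where $O$ is the unique outer block of $\pi$. (This is the content of the Proposition \ref{prop:5.1} referred to in Remark \ref{rem:1.9}.)

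With this formula the conclusion is immediate. On the left I substitute $\lambda = \mu^{\boxplus(1+t)}$, whose free cumulants are $(1+t)\alpha_V$, and $\xi = \nu$, with free cumulants $\beta_V$ (using the notation $\alpha_V,\beta_V$ for the coefficients $\alpha_{(i_1,\ldots,i_n)|V},\beta_{(i_1,\ldots,i_n)|V}$ of Theorem \ref{thm:1.3}); on the right I substitute $\lambda = \mu$, with free cumulants $\alpha_V$, and $\xi = \mu^{\boxplus t}\boxplus\nu$, whose free cumulants are $t\alpha_V+\beta_V$ by additivity of free cumulants under $\boxplus$. Each inner-block factor then collapses to $(1+t)\alpha_W+\beta_W$ on both sides, while the single outer-block factor is $(1+t)\alpha_O$ on the left and $\alpha_O$ multiplied by the global prefactor $(1+t)$ on the right; these agree. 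Hence the left-hand sum equals the right-hand sum term by term, which is the displayed $\eta$-identity and therefore (\ref{eqn:1.101}). The argument runs uniformly for all $t\ge 0$; at $t=0$ everything collapses to $\mu\boxright\nu = \mu\boxright\nu$, consistent with $\mu^{\boxplus 0}\boxplus\nu = \nu$.

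I expect the one genuinely delicate step to be the second paragraph, namely verifying that Boolean moment–cumulant inversion applied to (\ref{eqn:1.31}) produces precisely the restriction to irreducible non-crossing partitions with the outer/inner weights intact. Everything after that is the one-line bookkeeping in which the scaling factor $(1+t)$ carried by the outer block on the left is recognized as the global prefactor $(1+t)$ on the right.
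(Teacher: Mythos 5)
Your proof is correct, but it takes a genuinely different route from the paper's. The paper (Proposition \ref{prop:5.1}) first proves the case $t=1$, i.e.\ Equation (\ref{eqn:3.82}), $\bB ( \mu \boxright \nu ) = \mu \boxright ( \mu \boxplus \nu )$, by comparing the free-cumulant formula (\ref{eqn:3.31}) with the Boolean-cumulant formula (\ref{eqn:3.73}); induction via the semigroup law $\bB_{m+1} = \bB \circ \bB_m$ then gives all integer $t=m$, and arbitrary $t>0$ is reached by interpolation: the cumulant coefficients of both sides of (\ref{eqn:1.101}) are polynomial functions of $t$ (for the left-hand side this invokes Remark 4.4 of \cite{BN09} on the cumulants of $\bB_t$), and two polynomials agreeing on $\bN$ agree everywhere. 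You bypass both the induction and the interpolation: unwinding $\bB_t$ through its definition and through the $\boxplus$-linearity (\ref{eqn:1.22}) reduces (\ref{eqn:1.101}) to the single identity $\eta_{\mu^{\boxplus (1+t)} \boxright \nu} = (1+t)\, \eta_{\mu \boxright ( \mu^{\boxplus t} \boxplus \nu )}$, which you verify term by term, the factor $1+t$ carried by the outer block on one side matching the global prefactor on the other; this is uniform in $t \geq 0$ and needs no input from \cite{BN09}. The price is the Boolean-cumulant formula for $\eta_{\lambda \boxright \xi}$, which you correctly flag as the delicate step: it is exactly Proposition \ref{prop:3.7} (Equation (\ref{eqn:3.73})) of the paper. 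The paper proves that formula directly from the definition of $\boxright$ (via Lemma \ref{lemma:3.6} and the order $\leqleq$) and then deduces the moment formula of Theorem \ref{thm:1.3} from it (Proposition \ref{prop:3.9}); you run the implication in reverse, recovering (\ref{eqn:3.73}) from Theorem \ref{thm:1.3} by Boolean moment-cumulant inversion along the interval-hull factorization of $NC(n)$, which is valid since the Boolean moment-cumulant system is triangular, the weight in (\ref{eqn:1.31}) is multiplicative over the components, and the outer blocks of $\pi$ are exactly the outer blocks of its components. One small repair: your parenthetical crediting the $\eta$-formula to ``the Proposition \ref{prop:5.1} referred to in Remark \ref{rem:1.9}'' is misplaced, since Proposition \ref{prop:5.1} is the very statement being proved and citing it would be circular; the correct reference is Proposition \ref{prop:3.7}, and since you derive the formula yourself from Theorem \ref{thm:1.3}, nothing in your argument actually depends on that citation.
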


$\ $

The final part of the introduction discusses two other interesting 
algebraic properties of $\boxright$, obtained by extrapolating 
functional equations which are known to be satisfied by 
subordination functions in the 1-variable framework. One of these 
two properties extends a remarkable formula for 
the sum of the subordination functions of $\mu \boxplus \nu$ with 
respect to $\mu$ and to $\nu$ (see e.g. Theorem 4.1 in 
\cite{BB07}). This formula can be equivalently written in terms 
of the $\eta$-series of $\mu \boxright \nu$ and 
$\nu \boxright \mu$, and in this form 
it goes through to the multi-variable framework, as follows.

\begin{proposition}   \label{prop:1.11}
One has that 
\begin{equation}   \label{eqn:1.111}
\eta_{\mu \boxright \nu} + \eta_{\nu \boxright \mu}
= \eta_{\mu \boxplus \nu}, \ \ \forall \, \mu , \nu \in \dalg (k).
\end{equation} 
\end{proposition}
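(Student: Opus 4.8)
The plan is to carry out the classical one-variable argument behind the subordination sum formula $\omega_\mu+\omega_\nu=z+F_{\mu\boxplus\nu}$ entirely inside $\bC\langle\langle z_1,\ldots,z_k\rangle\rangle$, using the $R$-transform calculus in place of reciprocal Cauchy transforms. Throughout I would write $\widetilde M_\rho:=1+M_\rho$, so that $\eta_\rho=M_\rho\widetilde M_\rho^{-1}=1-\widetilde M_\rho^{-1}$, and for a series $g$ with constant term $1$ I would let $D_g$ denote the substitution endomorphism sending $z_i\mapsto z_ig$ for $1\le i\le k$. Each $D_g$ is an algebra automorphism of $\bC\langle\langle z_1,\ldots,z_k\rangle\rangle$ (so it respects products and inverses), and inspecting the effect of two successive substitutions yields the composition law $D_g\circ D_h=D_{g\cdot D_g(h)}$. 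In this language the functional equation for the $R$-transform recalled in Remark \ref{rem:1.2} reads $D_{\widetilde M_\rho}(R_\rho)=M_\rho$, equivalently $D_{\widetilde M_\rho}(1+R_\rho)=\widetilde M_\rho$, and for fixed $R_\rho$ it determines $\widetilde M_\rho$ uniquely (the coefficients of $\widetilde M_\rho$ are recovered degree by degree); Definition \ref{def:1.1} reads $R_{\mu\boxright\nu}=D_{\widetilde M_\nu}(R_\mu)\cdot\widetilde M_\nu^{-1}$.

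Set $\sigma:=\mu\boxright\nu$, $\tau:=\nu\boxright\mu$ and $\lambda:=\mu\boxplus\nu$. First I would prove the ``subordination relation'' $\widetilde M_\lambda=\widetilde M_\sigma\cdot D_{\widetilde M_\sigma}(\widetilde M_\nu)$, the formal counterpart of $G_{\mu\boxplus\nu}=G_\nu\circ\omega_\nu$. Writing $Q:=\widetilde M_\sigma\cdot D_{\widetilde M_\sigma}(\widetilde M_\nu)$, the composition law gives $D_Q=D_{\widetilde M_\sigma}\circ D_{\widetilde M_\nu}$, and I would check that $Q$ satisfies the functional equation for $\lambda$, i.e.\ $D_Q(1+R_\mu+R_\nu)=Q$. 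Indeed $D_Q(R_\nu)=D_{\widetilde M_\sigma}(D_{\widetilde M_\nu}(R_\nu))=D_{\widetilde M_\sigma}(M_\nu)=D_{\widetilde M_\sigma}(\widetilde M_\nu)-1$, while $D_Q(R_\mu)=D_{\widetilde M_\sigma}(D_{\widetilde M_\nu}(R_\mu))=D_{\widetilde M_\sigma}(R_\sigma\widetilde M_\nu)=M_\sigma\cdot D_{\widetilde M_\sigma}(\widetilde M_\nu)$, where I used Definition \ref{def:1.1} and the functional equations for $\nu$ and for $\sigma$. Adding $1$ and summing, the two terms combine to $\widetilde M_\sigma\cdot D_{\widetilde M_\sigma}(\widetilde M_\nu)=Q$; by uniqueness $Q=\widetilde M_\lambda$, so $\widetilde M_\lambda=\widetilde M_\sigma\cdot D_{\widetilde M_\sigma}(\widetilde M_\nu)$ and in particular $D_{\widetilde M_\lambda}=D_{\widetilde M_\sigma}\circ D_{\widetilde M_\nu}$.

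With the subordination relation in hand the endgame is short. From the computation just made, $D_{\widetilde M_\lambda}(R_\mu)=M_\sigma\cdot D_{\widetilde M_\sigma}(\widetilde M_\nu)=M_\sigma\widetilde M_\sigma^{-1}\widetilde M_\lambda=\eta_\sigma\widetilde M_\lambda$, and the symmetric version of the subordination relation (taken with respect to $\mu$) gives $D_{\widetilde M_\lambda}(R_\nu)=\eta_\tau\widetilde M_\lambda$. Applying $D_{\widetilde M_\lambda}$ to $1+R_\lambda=1+R_\mu+R_\nu$ and using the functional equation for $\lambda$ on the left, I obtain $\widetilde M_\lambda=1+\eta_\sigma\widetilde M_\lambda+\eta_\tau\widetilde M_\lambda$, hence $(1-\eta_\sigma-\eta_\tau)\widetilde M_\lambda=1$. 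Therefore $1-\eta_\sigma-\eta_\tau=\widetilde M_\lambda^{-1}=1-\eta_\lambda$, which is exactly $\eta_{\mu\boxright\nu}+\eta_{\nu\boxright\mu}=\eta_{\mu\boxplus\nu}$.

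The hard part will be the subordination relation of the second step: this is the point where the analytic composition $G_\nu\circ\omega_\nu$ of the one-variable theory must be replaced by the noncommutative substitution calculus, and the argument hinges on handling that calculus correctly — in particular on the composition law $D_g\circ D_h=D_{g\cdot D_g(h)}$ and on each $D_g$ respecting products and inverses. Once that relation is in place, the rest is purely formal manipulation of the functional equation together with the linearity $R_{\mu\boxplus\nu}=R_\mu+R_\nu$. As a routine preliminary I would also record the uniqueness of the solution $\widetilde M$ of $D_{\widetilde M}(1+R)=\widetilde M$, since this is what legitimizes the identification of $Q$ with $\widetilde M_\lambda$.
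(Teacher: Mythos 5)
Your proof is correct, and it takes a genuinely different route from the paper's. The paper proves this statement as Proposition \ref{prop:6.2}, by purely combinatorial means: it expands $\cf_{(i_1,\ldots,i_n)}(\eta_{\mu\boxplus\nu})$ through Equation (\ref{eqn:2.143}) together with $R_{\mu\boxplus\nu}=R_\mu+R_\nu$, splits the resulting sum over pairs $(\pi,c)$ (with $\pi\leqleq 1_n$ and $c$ a colouring of $\pi$) according to the colour assigned to the unique outer block of $\pi$, and then identifies the two halves as the Boolean cumulants of $\mu\boxright\nu$ and of $\nu\boxright\mu$ via Proposition \ref{prop:3.7}; so it leans on the non-crossing-partition machinery of Section 3. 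Your argument bypasses all of that: it uses only Definition \ref{def:1.1}, the functional equation (\ref{eqn:2.133}), additivity of $R$ under $\boxplus$, and the substitution calculus --- the composition law $D_g\circ D_h=D_{g\cdot D_g(h)}$, multiplicativity of $D_g$, and the degree-by-degree uniqueness of the solution $\widetilde{M}$ of $D_{\widetilde{M}}(1+R)=\widetilde{M}$, all of which you state and justify correctly. I checked the key computations in your notation ($\sigma=\mu\boxright\nu$, $\tau=\nu\boxright\mu$, $\lambda=\mu\boxplus\nu$): indeed $D_Q(R_\nu)=D_{\widetilde{M}_\sigma}(\widetilde{M}_\nu)-1$ and $D_Q(R_\mu)=M_\sigma\cdot D_{\widetilde{M}_\sigma}(\widetilde{M}_\nu)$, so $D_Q(1+R_\mu+R_\nu)=Q$, whence $Q=\widetilde{M}_\lambda$ by uniqueness, and the endgame $(1-\eta_\sigma-\eta_\tau)\widetilde{M}_\lambda=1$ is valid (a left inverse of $\widetilde{M}_\lambda$ is the inverse). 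Your route is essentially the $k$-variable transcription of the one-variable manipulation the paper performs (for $R$- rather than $\eta$-series) in Proposition \ref{prop:2.3}, and it yields as a byproduct the multi-variable subordination relation $1+M_{\mu\boxplus\nu}=\bigl(1+M_{\mu\boxright\nu}\bigr)\cdot D_{1+M_{\mu\boxright\nu}}\bigl(1+M_\nu\bigr)$, the formal analogue of $G_{\mu\boxplus\nu}=G_\nu\circ\omega$, which the paper never states explicitly for $k>1$. What the paper's approach buys in exchange is economy: given the Section 3 results (needed elsewhere in the paper anyway), its proof is a short bookkeeping of colourings, and it exhibits the identity as a transparent combinatorial splitting by the colour of the outer block.
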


\vspace{6pt}

Another property of $\boxright$ comes from the functional equation 
satisfied by the subordination function of a convolution power 
$\nu^{\boxplus p}$ with respect to $\nu$, where $\nu$ is a 
probability measure on $\bR$ and $p \in [ 1, \infty )$ (see Theorem 
2.5 in \cite{BB04}). This too can be translated into a formula
involving $\eta$-series, which goes through to multi-variable 
framework. More precisely, the subordination 
distribution of $\nu^{\boxplus p}$ with 
respect to $\nu$ can be considered for any $\nu \in \dalg (k)$ and 
$p \in [ 1, \infty )$ (see Definition \ref{def:6.3} below), and the 
following statement holds.

\begin{proposition}    \label{prop:1.12}
For every $\nu \in \dalg (k)$ and $p \geq 1$, the
subordination distribution of $\nu^{\boxplus p}$ with respect 
to $\nu$ is equal to $\bigl( \bB ( \nu ) \bigr)^{\boxplus (p-1)}$.
\end{proposition}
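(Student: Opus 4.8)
The plan is to reduce the statement to two algebraic identities for $\boxright$ already recorded in the introduction: the $\boxplus$-homogeneity of $\boxright$ in its first argument (Equation \eqref{eqn:1.22}) and the self-subordination formula $\nu \boxright \nu = \bB(\nu)$ (Equation \eqref{eqn:1.23}). The first thing to do is to unpack Definition \ref{def:6.3}. Since $\nu^{\boxplus p} = \nu^{\boxplus (p-1)} \boxplus \nu$, and since the subordination distribution of a free convolution $\mu \boxplus \nu$ with respect to $\nu$ is by definition $\mu \boxright \nu$, the object to be computed is $\nu^{\boxplus (p-1)} \boxright \nu$. Thus the proposition amounts to the single identity
\[
\nu^{\boxplus (p-1)} \boxright \nu = \bigl( \bB ( \nu ) \bigr)^{\boxplus (p-1)}, \qquad \nu \in \dalg (k), \ p \geq 1 .
\]

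For $p > 1$ I would set $t = p-1 > 0$ and apply Equation \eqref{eqn:1.22} with $\mu = \nu$, obtaining $\nu^{\boxplus (p-1)} \boxright \nu = ( \nu \boxright \nu )^{\boxplus (p-1)}$; substituting $\nu \boxright \nu = \bB ( \nu )$ from Equation \eqref{eqn:1.23} then settles this case. The boundary value $p = 1$ has to be handled separately, because \eqref{eqn:1.22} is stated only for $t > 0$: here $\nu^{\boxplus 0}$ is the $\boxplus$-unit $\delta$ (the distribution with vanishing $R$-transform), and one reads off directly from Definition \ref{def:1.1} that $\delta \boxright \nu = \delta$, which matches $\bigl( \bB ( \nu ) \bigr)^{\boxplus 0} = \delta$.

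As an independent check -- and as the computation that makes the exponent $p-1$ transparent at the level of series -- I would also run the argument directly on $R$-transforms. Using $R_{\nu^{\boxplus (p-1)}} = (p-1) R_{\nu}$ and the fact that scalar multiples pass through the substitution $z_i \mapsto z_i ( 1 + M_{\nu} )$, Definition \ref{def:1.1} gives
\[
R_{\nu^{\boxplus (p-1)} \boxright \nu} = (p-1) \, R_{\nu} \bigl( z_1 ( 1 + M_{\nu} ) , \ldots , z_k ( 1 + M_{\nu} ) \bigr) \cdot ( 1 + M_{\nu} )^{-1} .
\]
The functional equation for the $R$-transform recalled in Remark \ref{rem:1.2} (applied to the distribution $\nu$) collapses the bracketed series to $M_{\nu}$, so the right-hand side becomes $(p-1) \, M_{\nu} ( 1 + M_{\nu} )^{-1} = (p-1) \, \eta_{\nu}$. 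Since $\eta_{\nu} = R_{\bB ( \nu )}$ by \eqref{eqn:1.226} and $R$ linearizes $\boxplus$, this equals $R_{( \bB ( \nu ) )^{\boxplus (p-1)}}$, and uniqueness of the distribution with a prescribed $R$-transform concludes.

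I do not expect a serious obstacle in the calculation itself; what requires care is the interpretation of Definition \ref{def:6.3}, that is, confirming that the subordination distribution of $\nu^{\boxplus p}$ with respect to $\nu$ as defined there really is $\nu^{\boxplus (p-1)} \boxright \nu$, together with the separate treatment of the degenerate exponent $p = 1$.
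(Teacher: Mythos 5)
Your proof is correct, and it is leaner than the paper's. The paper also begins by unpacking Definition \ref{def:6.3} to identify the object as $\nu^{\boxplus (p-1)} \boxright \nu$, but then finishes by citing Proposition \ref{prop:5.3} (the general formula $(\mu^{\boxplus s}) \boxright (\mu^{\boxplus t}) = (\bB_t(\mu))^{\boxplus s}$) at $s = p-1$, $t = 1$. That proposition in turn rests on Proposition \ref{prop:5.1}, whose proof requires the identity $\bB(\mu \boxright \nu) = \mu \boxright (\mu \boxplus \nu)$, an induction over integer $t$, and a polynomial-interpolation argument to pass to real $t$. You bypass all of that: at $t=1$ the only facts needed are the $\boxplus$-homogeneity \eqref{eqn:1.22} of $\boxright$ in its first argument and the self-subordination identity $\nu \boxright \nu = \bB(\nu)$ of \eqref{eqn:1.23}, both of which follow directly from Definition \ref{def:1.1} and the functional equation for the $R$-transform, i.e.\ from material already in Remark \ref{rem:1.2}. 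What the paper's route buys is the more general statement about $(\mu^{\boxplus s}) \boxright (\mu^{\boxplus t})$ for arbitrary $s,t \geq 0$, of which Proposition \ref{prop:1.12} is a specialization; what your route buys is self-containedness and economy, and your second, purely $R$-transform computation ($R_{\nu^{\boxplus(p-1)} \boxright \nu} = (p-1)\,\eta_\nu = R_{(\bB(\nu))^{\boxplus(p-1)}}$) has the added merit of treating all $p \geq 1$ uniformly, with no separate discussion of the degenerate case $p=1$ (which your first argument correctly handles via $\delta \boxright \nu = \delta$, the identity recorded in the paper as \eqref{eqn:5.22}).
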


In particular, for distributions in $\cD_c (k)$ one gets the 
following corollary.

\begin{corollary}    \label{cor:1.13}
Let $\nu$ be a distribution in $\cD_c (k)$. Then for 
every $p \geq 1$ the subordination distribution of 
$\nu^{\boxplus p}$ with respect to $\nu$ is still in $\cD_c (k)$,
and is $\boxplus$-infinitely divisible.
\end{corollary}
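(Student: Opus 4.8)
The plan is to obtain the corollary as an essentially immediate consequence of Proposition \ref{prop:1.12}, which already identifies the subordination distribution of $\nu^{\boxplus p}$ with respect to $\nu$ as the free convolution power $\bigl( \bB ( \nu ) \bigr)^{\boxplus (p-1)}$. With that identification in hand, the whole statement reduces to understanding how $\bB$ and how fractional $\boxplus$-powers behave on $\cD_c (k)$, so no genuinely new machinery should be needed.

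First I would recall, from the review in Remark \ref{rem:1.2}.2 (that is, from Theorem 1 of \cite{BN08}), that the Boolean Bercovici--Pata bijection $\bB$ carries $\cD_c (k)$ into itself and that its image $\bB ( \cD_c (k) )$ is \emph{precisely} the set of $\boxplus$-infinitely divisible distributions in $\cD_c (k)$. Applying this to the given $\nu \in \cD_c (k)$ yields at once that $\lambda := \bB ( \nu )$ lies in $\cD_c (k)$ and, crucially, that $\lambda$ is $\boxplus$-infinitely divisible. Next I would invoke the basic fact that for such a $\lambda$ the fractional free convolution powers $\lambda^{\boxplus t}$ are well-defined and remain in $\cD_c (k)$ for every real $t \geq 0$ (this is exactly what infinite divisibility buys us; at the level of $R$-transforms it says that $t R_{\lambda}$ is a legitimate $R$-transform of a distribution in $\cD_c (k)$ for all $t \geq 0$). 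Taking $t = p-1 \geq 0$ then shows that $\bigl( \bB ( \nu ) \bigr)^{\boxplus (p-1)} \in \cD_c (k)$, giving the first assertion. The boundary value $p = 1$ is the neutral element for $\boxplus$ — the $k$-variable Dirac mass at the origin — which is trivially in $\cD_c (k)$ and infinitely divisible, so it requires no separate effort.

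It then remains to verify that $\rho := \lambda^{\boxplus (p-1)}$ is itself $\boxplus$-infinitely divisible, and here the argument closes on itself. Since $R_{\rho} = (p-1) R_{\lambda}$, for every $s \geq 0$ the series $s R_{\rho} = s(p-1) R_{\lambda}$ is again a nonnegative multiple of $R_{\lambda}$, hence a valid $R$-transform precisely because $\lambda$ was arranged to be infinitely divisible; equivalently, $\rho = \bigl( \lambda^{\boxplus (p-1)/n} \bigr)^{\boxplus n}$ for every integer $n \geq 1$, exhibiting $\rho$ as an $n$-th free convolution power for all $n$. I do not expect a real obstacle in this corollary: the substantive content lives entirely in Proposition \ref{prop:1.12} and in the structural result on $\bB$ from \cite{BN08}. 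The single point deserving a line of care is the passage to genuinely fractional powers $\lambda^{\boxplus (p-1)}$, which is legitimate exactly because $\lambda = \bB(\nu)$ has been made infinitely divisible.
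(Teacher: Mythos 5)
Your proposal is correct and follows essentially the same route as the paper: the paper's proof of this corollary (part $2^o$ of Corollary \ref{cor:6.4}) likewise invokes the identification from Proposition \ref{prop:1.12} and then uses the fact that $\bB(\nu)$ is $\boxplus$-infinitely divisible, so that every convolution power $\bigl( \bB(\nu) \bigr)^{\boxplus t}$, $t \geq 0$, lies in $\cD_c(k)$ and is itself infinitely divisible. Your write-up merely spells out in more detail the same two ingredients (Theorem 1 of \cite{BN08} for the structure of $\bB(\cD_c(k))$, and the characterization of infinite divisibility via fractional $\boxplus$-powers) that the paper treats as known.
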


One can also put into evidence other natural situations when 
subordination distributions in $\cD_c (k)$ are sure to be 
$\boxplus$-infinitely divisible. In particular, an immediate 
consequence of Remark \ref{rem:1.2}.1 (combined with Corollary
\ref{cor:1.6}) is that $\mu \boxright \nu$ is $\boxplus$-infinitely
divisible whenever $\mu , \nu \in \cD_c (k)$ and $\mu$ is itself 
$\boxplus$-infinitely divisible; see Corollary \ref{cor:4.11} below.

$\ $

\begin{remark}      \label{rem:1.14}
After circulating the first version of this paper, I was made aware
of the connection between the results obtained here and the paper
\cite{A08} of Anshelevich, where a two-variable extension of the 
transformation $\Phi$ from Remark \ref{rem:1.7} is being studied. 
More precisely, \cite{A08} introduces a map
\[
\dalg (k) \times \dalg (k) \ni ( \rho , \psi ) \mapsto 
\Phi [ \rho , \psi ] \in \dalg (k)
\]
with the property that $\Phi [ \rho , \psi ] \in \cD_c (k)$ for every
$\rho , \psi \in \cD_c (k)$ such that $\rho$ is $\boxplus$-infinitely 
divisible, and with the property that 
\begin{equation}   \label{eqn:1.141}
\Phi [ \gamma , \psi ] = \Phi ( \psi ), \ \ \
\forall \, \psi \in \dalg (k)
\end{equation}
(where $\gamma \in \cD_c (k)$ is the same as in Remark \ref{rem:1.7}). 
The formula which gives the translation 
between the results of the present paper and those of \cite{A08} is
\begin{equation}   \label{eqn:1.142}
\Phi [ \rho , \psi ] = \bB^{-1} \bigl( \, \rho \boxright \psi
\, \bigr), \ \ \ \forall \, \rho , \psi \in \dalg (k).
\end{equation}
Equation (\ref{eqn:1.142}) can be used to explain why the argument
$\rho$ in $\Phi [ \rho, \psi ]$ is naturally chosen to be 
$\boxplus$-infinitely divisible: as observed right before the 
present remark, one has in this situation that
$\rho \boxright \psi$ is $\boxplus$-infinitely divisible, hence 
that $\bB^{-1} ( \rho \boxright \psi ) \in \cD_c (k)$ for every 
$\psi \in \cD_c (k)$. (Another explanation for why $\rho$ is 
naturally taken to be infinitely divisible is presented in 
Remark 10 of \cite{A08}.)

By using the formula (\ref{eqn:1.142}), the description of $\Phi$ 
from the above Remark \ref{rem:1.9}.1 is reduced to 
(\ref{eqn:1.141}), and it is also easily seen that Proposition 
\ref{prop:1.10} of the present 
paper is equivalent to Theorem 11(b) from \cite{A08}.

The scope of \cite{A08} is different from (albeit overlapping with)
the one of the present paper, and the methods of proof are different,
invoking e.g. results about conditionally positive definite 
functionals, or about a multi-variable version of monotonic 
convolution -- see specifics in Section 4.2 of \cite{A08}.
\end{remark}

\begin{remark}      \label{rem:1.15}
{\em (Organization of the paper.) }
Besides the introduction, the paper has five other sections.
Section 2 contains a review of some background and notations.
Section 3 derives explicit combinatorial formulas for the free and 
Boolean cumulants of $\mu \boxright \nu$, and uses them in order to
obtain the moment formula announced in Theorem \ref{thm:1.3}. 
Section 4 is devoted to operator models, and to the proof of 
Theorem \ref{thm:1.4}.
Section 5 discusses in more detail the relations to the 
transformations $\bB_t$ that were stated in Theorem \ref{thm:1.8} 
and Proposition \ref{prop:1.10}.
Finally, Section 6 discusses in more detail the statements made
in Propositions \ref{prop:1.11}, \ref{prop:1.12}, and in 
Corollary \ref{cor:1.13}.
\end{remark}

\vspace{10pt}

\begin{center}
{\bf\large 2. Background and notations}
\end{center}
\setcounter{section}{2}
\setcounter{equation}{0}
\setcounter{theorem}{0}

\vspace{4pt}

\begin{center}
{\bf 2A. Motivation from 1-variable framework}
\end{center}

\begin{remark}   \label{rem:2.1}
Recall that for a probability distribution $\mu$ on $\bR$, the 
{\em Cauchy transform} of $\mu$ is the 
analytic function $G_{\mu}$ defined by
\begin{equation}   \label{eqn:2.11}
G_{\mu} (z) = \int_{\bR} \frac{1}{z-t} \ d \mu (t), \ \ 
z \in \bC \setminus \bR .
\end{equation}
The {\em reciprocal Cauchy transform} $F_{\mu}$ is defined by 
\begin{equation}   \label{eqn:2.12}
F_{\mu} (z) = 1/ G_{\mu} (z), \ \ 
z \in \bC \setminus \bR .
\end{equation}
It is easily checked that $G_{\mu}$ maps the upper half-plane
$\bC^{+} = \{ z \in \bC \mid \mbox{Im} (z) >0 \}$ to the lower 
half-plane $\bC^{-} = \{ z \in \bC \mid \mbox{Im} (z) <0 \}$;
as a consequence of this, $F_{\mu}$ can be viewed as an analytic 
self-map of $\bC^{+}$.
The measure $\mu$ is uniquely determined by $G_{\mu}$ (hence by
$F_{\mu}$ as well); and more precisely, $\mu$ can be retrieved 
from $G_{\mu}$ by a procedure called ``Stieltjes inversion 
formula'' (see e.g. \cite{A65}).

Let $\fF$ denote the set of all analytic self-maps of $\bC^{+}$ 
that can arise as $F_{\mu}$ for some probability measure $\mu$ on
$\bR$. One has a very nice intrinsic description of $\fF$, that
\begin{equation}    \label{eqn:2.13}
\fF = \Bigl\{ F: \bC^{+} \to \bC^{+} \mid F \mbox{ is analytic and }
\lim_{t \to \infty} \frac{F(it)}{it} =1  \Bigr\} .
\end{equation}
(For a nice review of this and other properties of $\fF$ one can 
consult Section 2 of \cite{M92} or Section 5 of \cite{BV93}.)

As mentioned in  the introduction, the starting point of this paper
is that for any two probability measures $\mu, \nu$ on $\bR$, there
exists a {\em subordination function} $\omega \in \fF$ 
such that
\begin{equation}    \label{eqn:2.14}
G_{\mu \boxplus \nu} (z) = G_{\nu} \bigl( \, \omega (z) \, \bigr),
\ \ z \in \bC^{+}.
\end{equation}
With $\mu, \nu, \omega$ as in (\ref{eqn:2.14}), it is natural to 
consider the unique probability measure $\sigma$ on $\bR$ such that
$F_{\sigma} = \omega$. This $\sigma$ was studied in \cite{L07}, 
where it is called the {\em $s$-free convolution} of $\mu$ and 
$\nu$, and is denoted by $\mu \boxright \nu$. The name ``$s$-free 
convolution'' appears in \cite{L07} in connection to a 
suitably tailored concept of ``$s$-freeness'' that is also
introduced in \cite{L07}. Since $s$-freeness is only marginally 
addressed in the present paper, we will refer to $\mu \boxright \nu$ 
by just calling it the {\em subordination distribution of 
$\mu \boxplus \nu$ with respect to $\nu$}. We will only look at 
$\mu \boxright \nu$ in the special case when $\mu$ and $\nu$ are 
compactly supported; in this case $\mu \boxright \nu$ is 
compactly supported as well (as one sees by examining the operator
model obtained for $\mu \boxright \nu$ in \cite{L07}).
\end{remark}

\begin{remark}   \label{rem:2.2}
If $\mu$ is a compactly supported probability measure on $\bR$, 
then in particular $\mu$ has moments of all orders:
\[
m_n := \int_{- \infty}^{\infty} t^n \ d \mu (t), \ \ 
n \in \bN,
\]
and one can form the {\em moment series} of $\mu$,
\begin{equation}    \label{eqn:2.21}
M_{\mu} (z) = \sum_{n=1}^{\infty} m_n z^n.
\end{equation}
In (\ref{eqn:2.21}), $M_{\mu}$ can be viewed as an analytic function
on a neighbourhood of 0, but in the present paper it is preferable 
to treat it as a formal power series in $z$. It is immediate that 
$M_{\mu}$ is connected to the Cauchy transform $G_{\mu}$ by the formula
\begin{equation}    \label{eqn:2.22}
1 + M_{\mu} (1/z) =  z \cdot G_{\mu} (z),
\end{equation}
where in (\ref{eqn:2.22}) it is convenient to also treat $G_{\mu}$ 
as a power series (obtained by writing 
$1/(t-z) = \sum_{n=1}^{\infty} t^{n-1}/z^n$ and then integrating
term by term on the right-hand side of (\ref{eqn:2.11}), for 
$z \in \bC^{+}$ with $|z|$ large enough).

In the study of free additive convolution a fundamental object
is Voiculescu's $R$-transform, which has the linearizing property 
that $R_{\mu \boxplus \nu} = R_{\mu} + R_{\nu}$. For a compactly
supported probability measure $\mu$ on $\bR$, the $R$-transform 
$R_{\mu}$ can be viewed as a power series, defined in terms 
of $M_{\mu}$ as the unique solution of the equation 
\begin{equation}    \label{eqn:2.23}
R_{\mu} \Bigl( \, z( 1+ M_{\mu} (z) \, \Bigr) =  M_{\mu} (z)
\end{equation}
(equation in $\bC [[z]]$, where $M_{\mu}$ is given as data and 
$R_{\mu}$ is the unknown). For the next proposition it is more 
convenient to write the definition of $R_{\mu}$ by emphasizing 
its relation to the Cauchy transform $G_{\mu}$. On these lines 
one first defines the so-called {\em $K$-transform} of $\mu$, 
which is simply the inverse under composition
\begin{equation}    \label{eqn:2.24}
K_{\mu} := G_{\mu}^{\langle -1 \rangle}.
\end{equation}
$K_{\mu}$ is a Laurent series of the form $K_{\mu} (z)$
 = $\frac{1}{z} + \alpha_1 + \alpha_2 z + \alpha_3 z^2 + \cdots$,
and one has 
\footnote{The original definition of the $R$-transform, made in 
\cite{V86}, simply has $\cR_{\mu} (z) = K_{\mu} (z) - 1/z$. The 
present paper uses the shifted version 
$R_{\mu} (z) = z \cR_{\mu} (z)$, which is more convenient for 
extension to a multi-variable framework.}
\begin{equation}    \label{eqn:2.25}
R_{\mu} (z) = z \Bigl( \, K_{\mu} (z) - \frac{1}{z} \, \Bigr).
\end{equation}
In the next proposition, Equation (\ref{eqn:2.25}) will be used 
in the equivalent form giving $K_{\mu}$ in terms of $R_{\mu}$,
\begin{equation}  \label{eqn:2.26}
K_{\mu} (z) = \frac{1 + R_{\mu} (z)}{z}.
\end{equation}
\end{remark}

\vspace{10pt}

\begin{proposition}   \label{prop:2.3}
Let $\mu, \nu$ be compactly supported probability 
measures on $\bR$, and let the probability measure 
$\mu \boxright \nu$ be defined as in Remark \ref{rem:2.1}.
Then
\begin{equation}  \label{eqn:2.31}
R_{\mu \boxright \nu} (z) =  \frac{ R_{\mu} 
       \bigl( \ z( 1+ M_{\nu} (z)) \ \bigr) }{ 1+ M_{\nu} (z) } .
\end{equation}
\end{proposition}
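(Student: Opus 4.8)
The plan is to unwind the analytic definition of $\sigma := \mu \boxright \nu$ from Remark \ref{rem:2.1}, pushing the subordination relation (\ref{eqn:2.14}) through the compositional inverse of $G_\nu$, then converting everything into $R$-transforms via (\ref{eqn:2.26}) and, at the last step, into $M_\nu$ via (\ref{eqn:2.22}). Since $\mu$, $\nu$, and $\sigma$ are compactly supported, I would treat $G$, $F$, $K$, $R$, $M$ as honest (Laurent) series and read (\ref{eqn:2.31}) off as an identity of power series.

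First I would invert $G_\nu$ in (\ref{eqn:2.14}): applying $K_\nu = G_\nu^{\langle -1 \rangle}$ to both sides of $G_{\mu \boxplus \nu}(z) = G_\nu(\omega(z))$ gives $\omega(z) = K_\nu \bigl( G_{\mu \boxplus \nu}(z) \bigr)$. Writing $w := G_{\mu \boxplus \nu}(z)$, so that $K_{\mu \boxplus \nu}(w) = z$, I would then invoke the linearity $R_{\mu \boxplus \nu} = R_\mu + R_\nu$ in the equivalent form $K_{\mu \boxplus \nu} = K_\mu + K_\nu - 1/z$, which follows at once from (\ref{eqn:2.26}). Substituting $w$ and using $K_\nu(w) = \omega(z)$ yields $z = K_\mu(w) + \omega(z) - 1/w$, hence the key intermediate identity
\[
z - \omega(z) = K_\mu(w) - \frac{1}{w} = \frac{R_\mu(w)}{w},
\qquad w = G_{\mu \boxplus \nu}(z) = G_\nu(\omega(z)).
\]

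Next I would pass to $\sigma$. Since $F_\sigma = \omega$ and $F_\sigma = 1/G_\sigma$ by (\ref{eqn:2.12}), we have $G_\sigma = 1/\omega$. Introducing $u := G_\sigma(z)$, so that $\omega(z) = 1/u$ and $z = K_\sigma(u)$, the intermediate identity becomes $K_\sigma(u) - 1/u = R_\mu(w)/w$ with $w = G_\nu(1/u)$; multiplying by $u$ and applying (\ref{eqn:2.26}) to $\sigma$ turns the left-hand side into $R_\sigma(u)$. Finally I would rewrite $w$ using (\ref{eqn:2.22}): replacing $z$ by $1/u$ there gives $G_\nu(1/u) = u\bigl( 1 + M_\nu(u) \bigr)$. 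Plugging this in collapses the expression to $R_\sigma(u) = R_\mu \bigl( u(1 + M_\nu(u)) \bigr) \big/ \bigl( 1 + M_\nu(u) \bigr)$, which (renaming $u$ as $z$) is exactly (\ref{eqn:2.31}).

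The main obstacle I anticipate is not the algebra but the bookkeeping of domains and the legitimacy of the series manipulations: the subordination function $\omega$ is a priori only an analytic self-map of $\bC^{+}$, while the transforms involved are expanded either near $0$ (the Laurent series $K$, and the power series $M$, $R$) or near $\infty$ (the behaviour $G(z) \sim 1/z$, $F(z) \sim z$, $\omega(z) \sim z$). I would therefore record at the outset that the compact support of $\sigma$ noted in Remark \ref{rem:2.1} guarantees $\omega = F_\sigma \in \fF$ has a genuine expansion at infinity with $\lim_{t \to \infty} \omega(it)/(it) = 1$, so that $u = G_\sigma(z) \to 0$ as $z \to \infty$ and each composition above ($K_\nu \circ G_{\mu \boxplus \nu}$, the evaluations of $R_\mu$, and $M_\nu$ at $u$) is an honest composition of convergent series near the relevant point; the resulting equality then holds as an identity of formal power series, as desired.
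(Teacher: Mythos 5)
Your proposal is correct and follows essentially the same route as the paper: inverting the subordination relation $G_{\mu \boxplus \nu} = G_{\nu} \circ \omega$, using $R_{\mu \boxplus \nu} = R_{\mu} + R_{\nu}$ through the $K$-transforms via (\ref{eqn:2.26}), and concluding with the substitution $w = G_{\nu}(1/z) = z\bigl(1 + M_{\nu}(z)\bigr)$. Your key intermediate identity $z - \omega(z) = R_{\mu}(w)/w$, read in the variable $w$, is precisely the paper's Equation (\ref{eqn:2.34}); the only differences are presentational (you track point evaluations through the variables $w$ and $u$ where the paper composes Laurent series, and you add explicit remarks on the domains of convergence).
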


\begin{proof}
Let us denote for brevity $\mu \boxright \nu =: \sigma$. From how
$\mu \boxright \nu$ is defined we have that 
\begin{equation}  \label{eqn:2.32}
G_{\mu \boxplus \nu} = G_{\nu} \circ F_{\sigma}.
\end{equation}
By taking inverses under composition on both sides of 
(\ref{eqn:2.32}) one finds that $K_{\mu \boxplus \nu}$ = 
$F_{\sigma}^{\langle -1 \rangle} \circ K_{\nu}$, hence that 
$F_{\sigma} \circ K_{\mu \boxplus \nu} = K_{\nu}$;
this in in turn implies that 
$G_{\sigma} \circ K_{\mu \boxplus \nu} = 1/K_{\nu}$,
and that
$K_{\mu \boxplus \nu} = K_{\sigma} \circ \bigl( 1/K_{\nu} \bigr)$.
So one gets the formula:
\begin{equation}  \label{eqn:2.33}
K_{\mu \boxplus \nu} (w) = K_{\sigma}
\Bigl( \, 1/K_{\nu} (w) \, \Bigr)  
\end{equation}
(equality of Laurent series in an indeterminate $w$).

In (\ref{eqn:2.33}) let us next replace the $K$-transforms of 
$\mu \boxplus \nu$ and of $\sigma$ in terms of the corresponding 
$R$-transforms, by using Equation (\ref{eqn:2.26}). On the 
left-hand side we obtain
\[
K_{\mu \boxplus \nu} (w) 
= \frac{1 + R_{\mu \boxplus \nu} (w) }{w} 
= \frac{1 + R_{\mu} (w) + R_{\nu} (w)}{w} 
= \frac{R_{\mu} (w)}{w} + K_{\nu} (w),
\]
while on the right-hand side we obtain
\[
K_{\sigma} \Bigl( \, 1/K_{\nu} (w) \, \Bigr)  
= \frac{ 1+ R_{\sigma} \bigl( \, 1/K_{\nu} (w) \, \bigr)}{
    1/K_{\nu} (w)  }
=  K_{\nu} (w) + K_{\nu} (w) \cdot 
R_{\sigma} \bigl( \, 1/K_{\nu} (w) \, \bigr).
\]
After making these replacements and after subtracting $K_{\nu} (w)$ 
out of both sides of (\ref{eqn:2.33}) one arrives to
\begin{equation}  \label{eqn:2.34}
\frac{R_{\mu} (w)}{w} = K_{\nu} (w) \cdot R_{\sigma} 
\bigl( \, 1/K_{\nu} (w) \, \bigr).
\end{equation}

Finally, in (\ref{eqn:2.34}) let us make the substitution 
$z = 1/K_{\nu} (w)$, with inverse 
$w = G_{\nu} (1/z) = z( 1+ M_{\nu} (z) )$; this substitution 
converts (\ref{eqn:2.34}) into
\[
\frac{ R_{\mu} \bigl( \, z( 1+ M_{\nu} (z)) \, \bigr) }{ 
z (1+ M_{\nu} (z)) } =
\frac{1}{z} \cdot R_{\sigma} (z),
\]
and (\ref{eqn:2.31}) follows.
\end{proof}

$\ $

\begin{center}
{\bf 2B. Non-crossing partitions}
\end{center}

\begin{notation}         \label{def:2.4} 
{\em ($NC(n)$ terminology.) }
Let $n$ be a positive integer.

$1^o$ Let $\pi$ = $\{ V_{1} , \ldots , V_{p} \}$ be a partition 
of $\{ 1, \ldots ,n \}$ -- i.e. $V_{1} , \ldots , V_{p}$ are 
pairwise disjoint non-empty sets (called the {\em blocks} of $\pi$), 
and $V_{1} \cup \cdots \cup V_{p}$ = 
$\{ 1, \ldots , n \}$. We say that $\pi$ is {\em non-crossing} if for 
every $1 \leq i < j < k < \ell \leq n$ such that $i$ is in the same
block with $k$ and $j$ is in the same block with $\ell$, it necessarily
follows that all of $i,j,k, \ell$ are in the same block of $\pi$.
The set of all non-crossing partitions of 
$\{ 1, \ldots , n \}$ will be denoted by $NC(n).$ 

$2^o$ Let $\pi$ be a partition in $NC(n)$. Since $\pi$ is, after 
all, a set of subsets of $\{ 1, \ldots , n \}$, it will be convenient
to write ``$V \in \pi$'' as a shorthand for ``$V$ is a block of 
$\pi$''. In the same vein, various calculations throughout the paper
will use functions ``$c: \pi \to \{ 1,2 \}$''. Such a function is 
thus a recipe for assigning a number $c(V) \in \{ 1,2 \}$ to 
every block $V$ of $\pi$, and will be referred to as a 
{\em colouring} of $\pi$.

$3^o$ For $\pi \in NC(n)$, the number of blocks of $\pi$ will be 
denoted by $| \pi |$. 

$4^o$ Let $\pi$ be a partition in $NC(n)$, and let $V$ be a block
of $\pi$. If there exists a block $W$ of $\pi$ such that 
$\min (W) < \min (V)$ and $\max (W) > \max (V)$, then one says 
that $V$ is an {\em inner} block of $\pi$. In the opposite case 
one says that $V$ is an {\em outer} block of $\pi$.

$5^o$ Every partition $\pi \in NC(n)$ has a special colouring 
$\oo_{\pi} : \pi \to \{ 1, 2 \}$ which will be called the 
{\em inner/outer colouring} of $\pi$, and is defined by
\begin{equation}   \label{eqn:3.42}
\oo_{\pi} (V) = \left\{ \begin{array}{cl}
1 & \mbox{if $V$ is outer}   \\ 
2 & \mbox{if $V$ is inner,}   
\end{array}  \right. \ \ \ V \in \pi .
\end{equation}
\end{notation}

\begin{remark}     \label{rem:2.5}
$NC(n)$ is partially ordered by {\em reverse refinement}: for 
$\pi , \rho \in NC(n)$ one writes
``$\pi \leq \rho$'' to mean that every block of $\rho$ is a union of
blocks of $\pi$. The minimal and maximal element of $( NC(n), \leq )$ 
are denoted by $0_n$ (the partition of $\{ 1, \ldots , n \}$ into 
$n$ singleton blocks) and respectively $1_n$ (the partition of 
$\{ 1, \ldots , n \}$ into only one block).  

Let $\rho = \{ W_1 , \ldots , W_q \}$ be a fixed partition 
in $NC(n)$.  It is easy to see that one has a
natural poset isomorphism
\begin{equation}   \label{eqn:2.51}
\{ \pi \in NC(n) \mid \pi \leq \rho \} \ni \pi \mapsto
( \pi_1, \ldots , \pi_q ) \in
NC( \, |W_1| \, ) \times \cdots \times NC( \, |W_q| \, )
\end{equation}
where for every $1 \leq j \leq q$ the partition
$\pi_j \in NC( |W_j| )$ is obtained by restricting $\pi$ to $W_j$
and by re-denoting the elements of $W_j$, in increasing order, so that
they become $1,2, \ldots , |W_j|$. 
This is a particular case of a more general factorization 
property satisfied by the intervals of the poset 
$( NC(n) , \leq )$ -- see Lecture 9 in \cite{NS06}.
\end{remark}

\begin{remark}     \label{rem:2.6}
This paper also makes use of an other partial order relation on 
$NC(n)$, which was introduced in \cite{BN08} and is denoted by 
``$\leqleq$''. For $\pi , \rho \in NC(n)$ one writes
``$\pi \leqleq \rho$'' to mean that $\pi \leq \rho$ and that, in 
addition, the following condition is fulfilled:
\begin{equation}  
\left\{  \begin{array}{l}
\mbox{For every block $W$ of $\rho$ there exists a block}  \\
\mbox{$V$ of $\pi$ such that $\min (W), \max (W) \in V$.}
\end{array}  \right.
\end{equation}
It is immediately verified that ``$\leqleq$'' is indeed a partial 
order relation on $NC(n)$. It is much coarser than the reversed 
refinement order. For instance, the inequality $\pi \leqleq 1_n$ is 
not holding for all $\pi \in NC(n)$, but it rather amounts to the 
condition that the numbers $1$ and $n$ belong to the same block 
of $\pi$ (or equivalently, that $\pi$ has a unique outer block).
At the other end of $NC(n)$, the inequality $\pi \geqgeq 0_n$ can 
only take place when $\pi = 0_n$. The remaining part of Section 2B
reviews a couple of other properties of $\leqleq$ that will be 
used later on in the paper.
\end{remark}

\begin{definition}   \label{def:2.7}
Let $\pi, \rho$ be partitions in $NC(n)$ such that 
$\pi \leqleq \rho$. A block $V$ of $\pi$ is said to be 
{\em $\rho$-special} when there exists
a block $W$ of $\rho$ such that $\min (V) = \min (W)$ and 
$\max (V) = \max (W)$. 
\end{definition}

\begin{proposition}   \label{prop:2.8}
Let $\pi \in NC(n)$ be such that $\pi \leqleq 1_n$, and consider 
the set of partitions
\begin{equation}   \label{eqn:2.81}
\{ \rho \in NC(n) \mid \pi \leqleq \rho \leqleq 1_n \} .
\end{equation}
Then 
$\rho \mapsto \{ V \in \pi \mid V \mbox{ is $\rho$-special} \}$
is a one-to-one map from the set (\ref{eqn:2.81}) to the set of 
subsets of $\pi$. 
\footnote{According to the conventions made in Notation 
\ref{def:2.4}.2, ``subset of $\pi$'' stands here for
``set of blocks of $\pi$''.}
The image of this map is equal to
$\{ \fV \subseteq \pi \mid \fV \ni V_0 \}$, where 
$V_0$ denotes the unique outer block of $\pi$. 
\end{proposition}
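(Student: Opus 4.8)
The plan is to build an explicit inverse to the map
$\rho \mapsto S_\rho := \{V \in \pi \mid V \text{ is } \rho\text{-special}\}$,
and to deduce injectivity, surjectivity, and the description of the image from a single laminarity observation about non-crossing partitions. For a block $V$ write $[V] := [\min V, \max V]$ for its \emph{span}. I will use the elementary fact, read off directly from the definition of $NC(n)$, that the spans of the blocks of any non-crossing partition form a laminar family: for two distinct blocks the intervals $[V], [V']$ are either nested or disjoint (they cannot share an endpoint, since the blocks are disjoint). First I record the dictionary attached to $\pi \leqleq \rho$. For each block $W$ of $\rho$ the defining condition of $\leqleq$ gives a block $V \in \pi$ with $\min W, \max W \in V$; since $\pi \le \rho$ this forces $V \subseteq W$ and hence $[V] = [W]$, so $V$ is the unique $\rho$-special block inside $W$. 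Thus $W \mapsto V$ is a bijection from the blocks of $\rho$ onto $S_\rho$, and in particular each span $[W]$ is recovered from $S_\rho$. Because $\rho \leqleq 1_n$ forces $\rho$ to have a unique outer block $W_0 \ni 1, n$, whose special block must be the block $V_0$ of $\pi$ containing $1$ and $n$, every value $S_\rho$ contains $V_0$; this already shows the map lands inside $\{\fV \subseteq \pi \mid \fV \ni V_0\}$.

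Next I would prove injectivity by exhibiting the reconstruction of $\rho$ from $S_\rho$. I claim that a block $U$ of $\pi$ lies in the block $W$ of $\rho$ whose special block $V$ has the \emph{smallest} span among $\{[V'] : V' \in S_\rho\}$ that still contains $[U]$; this minimal span exists and is unique by laminarity, and it is defined for every $U$ since $[V_0] = [1,n]$. One inclusion is automatic: $U \subseteq W$ gives $[U] \subseteq [W] = [V]$. For the converse, suppose $U$ were placed by $\rho$ in a block $W'$ with special block $V'$ while some strictly smaller special span $[V] \subsetneq [V']$ still contains $[U]$. Then $\min V, \max V \in V \subseteq W$, whereas $W' \ne W$ contains the endpoints $\min V' < \min V$ and $\max V' > \max V$ together with a point $u \in U \subseteq W'$ strictly inside $[V]$ (strictly, since $\min V, \max V \notin U$). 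The indices $\min V < u < \max V < \max V'$ then realize a crossing between $W$ and $W'$, contradicting $\rho \in NC(n)$. Hence the placement is forced, $S_\rho$ determines $\rho$, and the map is injective.

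Finally, surjectivity is obtained by turning this reconstruction into a construction. Given $\fV \subseteq \pi$ with $V_0 \in \fV$, declare the blocks of $\rho$ to be the sets $W_V$ $(V \in \fV)$, where $W_V$ is the union of those $U \in \pi$ for which $[V]$ is the smallest span in $\{[V'] : V' \in \fV\}$ containing $[U]$. By construction $\pi \le \rho$, and one checks routinely that $[W_V] = [V]$ (so $V$ is $\rho$-special and $S_\rho = \fV$), that $W_{V_0}$ is the unique outer block (so $\rho \leqleq 1_n$), and that $\pi \leqleq \rho$. The one substantial point, and the main obstacle of the whole proposition, is to verify that $\rho$ is genuinely non-crossing. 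The hard part will be exactly this verification: assuming a crossing $a < b < c < d$ with $a, c \in W_V$ and $b, d \in W_{V'}$, I split on whether $[V], [V']$ are disjoint or nested (laminarity leaves only these). In the disjoint case each $W_V \subseteq [V]$ and $W_{V'} \subseteq [V']$, so the two intervals separate $\{a,c\}$ from $\{b,d\}$, contradicting the interleaving $a<b<c<d$. In the nested case, say $[V'] \subsetneq [V]$, one of $b, d$ lies strictly inside $[V]$; its $\pi$-block must escape $[V]$ (otherwise its minimal covering span would be $\subseteq [V] \subsetneq [V']$, contradicting its placement in $W_{V'}$), and this escaping block then crosses $V$ inside $\pi$, contradicting $\pi \in NC(n)$. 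The reverse nesting is symmetric. This establishes $\rho \in NC(n)$ and completes the proof.
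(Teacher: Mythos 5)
Your argument is correct, but there is nothing in the paper to compare it with line by line: the paper does not prove Proposition \ref{prop:2.8} at all, it defers the proof to Proposition 2.13 and Remark 2.14 of \cite{BN08}. Your proof is therefore a self-contained replacement for that citation. Its key mechanism --- that the spans $[\min V, \max V]$ of the blocks of a non-crossing partition form a laminar family with pairwise distinct endpoints, that under $\pi \leqleq \rho$ the blocks of $\rho$ are in bijection with the $\rho$-special blocks of $\pi$ via equality of spans, and that $\rho$ is then reconstructed (and, for surjectivity, constructed) by assigning each block $U$ of $\pi$ to the selected block whose span is minimal among the selected spans containing $[\min U, \max U]$ --- is a clean explicit parametrization, and what it buys is independence from the external reference: a reader of this paper gets a complete proof of the combinatorial fact underlying the proof of Proposition \ref{prop:3.7}, rather than a pointer to \cite{BN08}.

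One slip to fix, in the surjectivity step where you verify that the constructed $\rho$ is non-crossing. You announce the nested case as $[V'] \subsetneq [V]$, but the argument you then give --- ``otherwise its minimal covering span would be $\subseteq [V] \subsetneq [V']$, contradicting its placement in $W_{V'}$,'' followed by the escaping block crossing $V$ --- is the argument for the opposite nesting $[V] \subsetneq [V']$: there the point $b$, squeezed between $a, c \in W_V \subseteq [V]$, lies strictly inside $[V]$, and its $\pi$-block cannot have span contained in $[V]$ because its minimal covering span is $[V']$, which strictly contains $[V]$. For the case you actually announced, $[V'] \subsetneq [V]$, the correct run of the same argument uses instead the point $c$ (squeezed between $b, d \in W_{V'} \subseteq [V']$), whose $\pi$-block must escape $[V']$ and hence crosses $V'$ in $\pi$. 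Since you do note that ``the reverse nesting is symmetric,'' both cases are genuinely covered and this is a labelling error rather than a gap; just swap the labels so that each half of the argument sits under the correct hypothesis.
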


\vspace{10pt}

\noindent
For the proof of Proposition \ref{prop:2.8}, the reader is 
referred to Proposition 2.13 and Remark 2.14 of \cite{BN08}.

$\ $

\begin{remark}    \label{rem:2.9}
{\em (Interval partitions.)}
A partition $\pi$ of $\{ 1, \ldots , n \}$ is said to be an 
{\em interval partition} if every block $V$ of $\pi$ is of 
the form $V = [i,j] \cap \bZ$ for some $1 \leq i \leq j \leq n$.
The set of all interval partitions of $\{ 1, \ldots , n \}$ will 
be denoted by $\Int (n)$. It is clear that $\Int (n) \subseteq NC(n)$,
and it is easily verified that every interval partition is a 
maximal element of the poset $( NC(n), \leqleq )$. It is moreover
easy to see (left as exercise to the reader) that for every 
$\pi \in NC(n)$ there exists a unique $\rho \in \Int (n)$ such that 
$\pi \leqleq \rho$; the blocks of this special interval partition 
$\rho$ are in some sense the ``convex hulls'' of the outer blocks 
of $\pi$.
\end{remark}

$\ $

\begin{center}
{\bf 2C. Power series in $k$ noncommuting indeterminates}
\end{center}

\begin{notation}   \label{def:2.10}
We will denote by
$\bC \langle \langle z_1, \ldots , z_k \rangle \rangle$
the set of power series with complex coefficients 
in the non-commuting indeterminates $z_1, \ldots , z_k$, 
and we will use the notation 
$\bC_0 \langle \langle z_1, \ldots , z_k \rangle \rangle$
for the set of series in 
$\bC \langle \langle z_1, \ldots , z_k \rangle \rangle$
which have vanishing constant term. The general form of a series
$f \in \bC_0 \langle \langle z_1, \ldots , z_k \rangle \rangle$ 
is thus
\begin{equation}     \label{eqn:2.101}
f( z_1, \ldots , z_k) = \sum_{n=1}^{\infty} \
\sum_{i_1, \ldots , i_n =1}^k  \ \alpha_{(i_1, \ldots , i_n)}
z_{i_1} \cdots z_{i_n} 
\end{equation}
where the coefficients  $\alpha_{ (i_1, \ldots , i_n) }$ are 
from $\bC$.
\end{notation}

\begin{definition}    \label{def:2.11}
{\em (Coefficients for series in $\ncserk$.)}

$1^o$ For $n \geq 1$ and $1 \leq i_1, \ldots , i_n \leq k$ we will 
denote by
\begin{equation}  \label{eqn:2.111}
\cf_{(i_1, \ldots , i_n)} :
\bC_0 \langle \langle z_1, \ldots , z_k \rangle \rangle \to \bC
\end{equation}
the linear functional which extracts the coefficient of 
$z_{i_1} \cdots z_{i_n}$ in a series 
$f \in \bC_0 \langle \langle z_1, \ldots , z_k \rangle \rangle$.
Thus for $f$ written as in Equation (\ref{eqn:2.101}) we have 
$\cf_{(i_1, \ldots , i_n)} (f) = \alpha_{(i_1, \ldots , i_n)}$.

$2^o$ Suppose we are given a positive integer $n$, some indices 
$i_1, \ldots , i_n \in \{ 1, \ldots , k \}$, and a partition 
$\pi \in NC(n)$. We define a (generally non-linear) functional 
\begin{equation}  \label{eqn:2.112}
\cf_{(i_1, \ldots , i_n) ; \pi} :
\bC_0 \langle \langle z_1, \ldots , z_k \rangle \rangle \to \bC ,
\end{equation}
as follows. For every block $V = \{ b_1, \ldots , b_m \}$ of $\pi$, 
with $1 \leq b_1 < \cdots < b_m \leq n$, let us use the notation 
\[
(i_1, \ldots , i_n) \vert V \ := \ (i_{b_1}, \ldots , i_{b_m})
\in \{ 1, \ldots , k \}^m.
\]
Then we define
\begin{equation}  \label{eqn:2.113}
\cf_{(i_1, \ldots , i_n); \pi} (f) \ := \
\prod_{V \in \pi} \ \cf_{(i_1, \ldots , i_n)|V} (f),
\ \ \forall \, f \in \ncserk .
\end{equation}
(For example if we had $n=5$ and $\pi = \{ \{ 1,4,5 \} , \{ 2,3 \} \}$, 
and if $i_1, \ldots , i_5$ would be some fixed indices in
$\{ 1, \ldots , k \}$, then the above formula would become
\[
\cf_{(i_1, i_2, i_3, i_4, i_5) ; \pi } (f) \ = \
\cf_{(i_1, i_4, i_5)} (f) \cdot
\cf_{(i_2, i_3)} (f),
\]
$f \in \bC_0 \langle \langle z_1, \ldots , z_k \rangle \rangle$.) 
The quantities $\cf_{(i_1, \ldots , i_n); \pi} (f)$ will be 
referred to as {\em generalized coefficients} of the series $f$. 

$3^o$ Suppose that the positive integer $n$, the indices 
$i_1, \ldots , i_n \in \{ 1, \ldots , k \}$ and the partition 
$\pi \in NC(n)$ are as above, and that in addition we are also
given a colouring  $c: \pi \to \{ 1, 2 \}$. Then for any two 
series $f_1,f_2 \in \ncserk$ we define their 
{\em mixed generalized coefficient} corresponding to 
$(i_1, \ldots , i_n)$, $\pi$ and $c$ via the formula
\begin{equation}    \label{eqn:2.114}
\cf_{ (i_1, \ldots , i_n); \pi ; c } (f_1, f_2) :=
\prod_{ V \in \pi } \, \cf_{ (i_1, \ldots , i_n) |V } (f_{c(V)}).
\end{equation}
(For example if we had $n=5$, $\pi = \{ \{ 1,4,5 \} , \{ 2,3 \} \}$
and $c: \pi \to \{ 1,2 \}$ defined by 
$c( \, \{ 1,4,5 \} \, ) = 1$, $c( \, \{ 2,3 \} \, ) = 2$,then
then (\ref{eqn:2.114}) would become
\[
\cf_{(i_1, i_2, i_3, i_4, i_5) ; \pi } (f) \ = \
\cf_{(i_1, i_4, i_5)} (f_1) \cdot
\cf_{(i_2, i_3)} (f_2),
\]
for $f_1, f_2 \in \ncserk$ and $1 \leq i_1, \ldots , i_5 \leq k$.)
\end{definition}

\begin{remark}   \label{rem:2.12}
It is clear that for every $n \geq 1$, 
$1 \leq i_1, \ldots , i_n \leq k$, $\pi \in NC(n)$ and $f \in \ncserk$ 
one has 
\[
\cf_{ (i_1, \ldots , i_n); \pi ; c } (f, f) 
= \cf_{ (i_1, \ldots , i_n); \pi } (f),
\]
for no matter what colouring $c$ of $\pi$. Let us also record here 
the obvious expansion formula
\begin{equation}   \label{eqn:2.121}
\cf_{ (i_1, \ldots , i_n) ; \pi } (f_1 + f_2) =
\sum_{ c: \pi \to \{1,2 \} } \
\cf_{ (i_1, \ldots , i_n) ; \pi; c } (f_1 , f_2),
\end{equation}
holding for every $n \geq 1$, $1 \leq i_1, \ldots , i_n \leq k$,
$\pi \in NC(n)$, and $f_1, f_2 \in \ncserk$.
\end{remark}

\begin{definition}    \label{def:2.13}
{\em (Review of the series $M_{\mu}$, $R_{\mu}$, $\eta_{\mu}$.) }
Let $\mu$ be a distribution in $\dalg (k)$. 

$1^o$ We will denote by $M_{\mu}$ the series in $\ncserk$ defined by
\begin{equation}  \label{eqn:2.131}
M_{\mu} (z_1, \ldots , z_k) := \sum_{n=1}^{\infty} \
\sum_{i_1, \ldots , i_n =1}^k \ \mu (X_{i_1} \cdots X_{i_n})
\, z_{i_1} \cdots z_{i_n}.
\end{equation}
$M_{\mu}$ is called the {\em moment series} of $\mu$, and
its coefficients (the numbers $\mu ( X_{i_1} \cdots X_{i_n} )$, 
with $n \geq 1$ and $1 \leq i_1, \ldots , i_n \leq k$) are called 
the {\em moments} of $\mu$.

$2^o$ The {\em $\eta$-series} of $\mu$ is
\begin{equation}  \label{eqn:2.132}
\eta_{\mu} := M_{\mu} (1+M_{\mu})^{-1} \in \ncserk ,
\end{equation}
where $(1+ M_{\mu})^{-1}$ is the inverse of $1 + M_{\mu}$ under
multiplication in 
$\bC \langle \langle z_1, \ldots , z_k \rangle \rangle$.
The coefficients of $\eta_{\mu}$ are called the
{\em Boolean cumulants} of $\mu$.

$3^o$ There exists a unique series $R_{\mu} \in \ncserk$ which 
satisfies the functional equation
\begin{equation}  \label{eqn:2.133}
R_{\mu} \Bigl( \, z_1 (1+M_{\mu}), \ldots , z_k (1+M_{\mu} \,
\Bigr) = M_{\mu}.
\end{equation}
Indeed, it is easily seen that Equation (\ref{eqn:2.133}) 
amounts to a recursion which determines uniquely the coefficients
of $R_{\mu}$ in terms of those of $M_{\mu}$. The series $R_{\mu}$
is called the {\em R-transform} of $\mu$, and its coefficients
are called the {\em free cumulants} of $\mu$. (See the discussion
in Lecture 16 of \cite{NS06}, and specifically Theorem 16.15 and
Corollary 16.16 of that lecture.) 
\end{definition}

\begin{remark}    \label{rem:2.14}
It is very useful that one has explicit summation formulas which 
express the moments of a distribution $\mu \in \dalg (k)$ either 
in terms of its free cumulants or in terms of its Boolean cumulants.  
These are sometimes referred to as {\em moment-cumulant} formulas.
They say that for every $n \geq 1$ and 
$1 \leq i_1, \ldots , i_n \leq k$ one has
\begin{equation}    \label{eqn:2.141}
\mu ( X_{i_1} \cdots X_{i_n} ) =
\sum_{\pi \in NC(n)} \cf_{(i_1, \ldots , i_n); \pi} ( R_{\mu} )
\end{equation}
and respectively
\begin{equation}    \label{eqn:2.142}
\mu ( X_{i_1} \cdots X_{i_n} ) =
\sum_{\pi \in \Int (n)} \cf_{(i_1, \ldots , i_n); \pi} ( \eta_{\mu} )
\end{equation}
(where (\ref{eqn:2.141}), (\ref{eqn:2.142}) use the notations for 
generalized coefficients from Definition \ref{def:2.11}.2, and 
$\Int (n)$ is the set of interval-partitions from 
Remark \ref{rem:2.9}). Moreover, a similar summation formula 
can be used in order to express the Boolean 
cumulants of $\mu$ in terms of its free cumulants; it says that 
for every $n \geq 1$ and $1 \leq i_1, \ldots , i_n \leq k$ one has
\begin{equation}  \label{eqn:2.143}
\cf_{(i_1, \ldots , i_n)} ( \eta_{\mu} ) 
= \sum_{ \begin{array}{c}
{\scriptstyle  \pi \in NC(n),}  \\
{\scriptstyle \pi \leqleq 1_n}
\end{array}  } \ \cf_{(i_1, \ldots , i_n); \pi} ( R_{\mu} ).
\end{equation}
(For a more detailed discussion of the relation between $R_{\mu}$
and $\eta_{\mu}$ see Section 3 of \cite{BN08}, where Equation
(\ref{eqn:2.143}) appears in Proposition 3.9.)
\end{remark}

$\ $

$\ $

$\ $

$\ $

\begin{center}
{\bf\large 3. The approach to $\boxright$ via R-transforms}
\end{center}
\setcounter{section}{3}
\setcounter{equation}{0}
\setcounter{theorem}{0}

\noindent
The goal of this section is to derive explicit combinatorial 
formulas for the free and Boolean cumulants of $\mu \boxright \nu$, 
and then use them in order to obtain the moment formula announced 
in Theorem \ref{thm:1.3}.

\begin{remark}   \label{rem:3.1}
Let $\mu, \nu$ be distributions in $\dalg (k)$. Consider the 
subordination distribution $\mu \boxright \nu$, and recall that 
its $R$-transform satisfies the equation
\begin{equation}  \label{eqn:3.11}
R_{\mu \boxright \nu}  \cdot ( 1+M_{\nu} ) =
R_{\mu} \Bigl( \, z_1 (1+M_{\nu}), \ldots , z_k (1+M_{\nu} \, \Bigr).
\end{equation}
If we denote for convenience 
\[
\cf_{ (i_1, \ldots , i_n) } ( R_{\mu} ) =: 
\alpha_{ (i_1, \ldots , i_n) }, \ \ \forall \, n \geq 1, \
1 \leq i_1, \ldots , i_n \leq k,
\]
then the series on the right-hand side of (\ref{eqn:3.11}) is written
more precisely as
\begin{equation}  \label{eqn:3.12}
\sum_{m=1}^{\infty} \ \sum_{j_1, \ldots , j_m =1}^k \,
\alpha_{ (j_1, \ldots , j_m) } 
z_{j_1} (1+ M_{\nu}) \cdots z_{j_m} (1+ M_{\nu}).
\end{equation}

Let us fix an $n \geq 1$ and some indices 
$1 \leq i_1, \ldots , i_n \leq k$, and let us look at the coefficient
of $z_{i_1} \cdots z_{i_n}$ in the infinite sum from (\ref{eqn:3.12}).
Clearly, a term $\alpha_{ (j_1, \ldots , j_m) } z_{j_1}$
$(1+ M_{\nu}) \cdots z_{j_m} (1+ M_{\nu})$
contributes to this coefficient if and only if $m \leq n$ and there
exist $1 = s(1) < s(2) < \cdots < s(m) \leq n$ such that 
\begin{equation}  \label{eqn:3.13}
j_1 = i_{s(1)}, \ j_2 = i_{s(2)}, \ldots , j_m = i_{s(m)}.
\end{equation}
In the case when (\ref{eqn:3.13}) holds let us denote 
$\{ s(1), s(2), \ldots , s(m) \} =: S$, and let us refer to the 
intervals of integers
\[
( s(1), s(2) ) \cap \bZ, \ldots ,
( s(m-1), s(m) ) \cap \bZ, \  ( s(m), n ] \cap \bZ
\]
by calling them the {\em gaps of $S$}; with this notation the 
contribution of 
$\alpha_{ (j_1, \ldots , j_m) } z_{j_1} (1+ M_{\nu})$
$\cdots z_{j_m} (1+ M_{\nu})$
to the coefficient of $z_{i_1} \cdots z_{i_n}$ in (\ref{eqn:3.12}) 
is written as
\[
\alpha_{ (i_1, \ldots , i_n) | S} \cdot
\prod_{ \begin{array}{c}
{\scriptstyle G = \{ p, \ldots , q \} } \\
{\scriptstyle gap \ of \ S}
\end{array}  } \nu ( X_{i_p} \cdots X_{i_q} )
\]
(we make the convention that if $G$ is an empty gap of $S$ then 
the corresponding product $\nu ( X_{i_p} \cdots X_{i_q} )$ is taken
to be equal to 1). Since the set $S$ appearing above can be any 
subset of $\{ 1, \ldots , n \}$ which contains 1, we come to the 
conclusion that 
\begin{equation}  \label{eqn:3.14}
\cf_{ (i_1, \ldots , i_n) } \Bigl( \,
R_{\mu} \bigl( \, z_1 (1+M_{\nu}), \ldots , z_k (1+M_{\nu}) 
\, \bigr) \, \Bigr)
\end{equation}
\[
= \sum_{\begin{array}{c}
{\scriptstyle S \subseteq \{1, \ldots , n \} } \\
{\scriptstyle such \ that \ S \ni 1 }
\end{array}  } \, \Bigl( \alpha_{ (i_1, \ldots , i_n) | S} \cdot
\prod_{ \begin{array}{c}
{\scriptstyle G = \{ p, \ldots , q \} } \\
{\scriptstyle gap \ of \ S}
\end{array}  } \nu ( X_{i_p} \cdots X_{i_q} ) \Bigr) .
\]

By equating coefficients in the series on the two sides of 
(\ref{eqn:3.11}) and by employing (\ref{eqn:3.14}) one obtains 
explicit formulas for the coefficients of $R_{\mu \boxright \nu}$, 
as shown in the next lemma and proposition.
\end{remark}

\begin{lemma}   \label{lemma:3.2}
Consider the same notations as in Remark \ref{rem:3.1}. For every
$n \geq 1$ and $1 \leq i_1, \ldots , i_n \leq k$ one has that 
\begin{equation}  \label{eqn:3.21}
\cf_{ (i_1, \ldots , i_n) } \bigl( R_{\mu \boxright \nu} \bigr) 
= \sum_{\begin{array}{c}
{\scriptstyle S \subseteq \{1, \ldots , n \} } \\
{\scriptstyle such \ that \ S \ni 1,n }
\end{array}  } \, \Bigl( \alpha_{ (i_1, \ldots , i_n) | S} \cdot
\prod_{ \begin{array}{c}
{\scriptstyle G = \{ p, \ldots , q \} } \\
{\scriptstyle gap \ of \ S}
\end{array}  } \nu ( X_{i_p} \cdots X_{i_q} ) \Bigr) .
\end{equation}
\end{lemma}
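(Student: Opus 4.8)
The plan is to start from Equation (\ref{eqn:3.11}) and equate the coefficients of $z_{i_1} \cdots z_{i_n}$ on both sides, using the formula (\ref{eqn:3.14}) already established in Remark \ref{rem:3.1} to handle the right-hand side. The work then splits into two tasks: extracting the coefficient from the product $R_{\mu \boxright \nu} \cdot (1 + M_{\nu})$ on the left, and reorganizing the subset sum on the right so that the two match. An induction on $n$ ties them together.

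First I would extract $\cf_{(i_1, \ldots , i_n)}$ from the product $R_{\mu \boxright \nu} \cdot (1 + M_{\nu})$. Since $R_{\mu \boxright \nu} \in \ncserk$ has vanishing constant term while $1 + M_{\nu}$ has constant term $1$, a word $z_{i_1} \cdots z_{i_n}$ factors as a nonempty prefix $z_{i_1} \cdots z_{i_j}$ coming from $R_{\mu \boxright \nu}$, followed by a possibly empty suffix $z_{i_{j+1}} \cdots z_{i_n}$ coming from $1 + M_{\nu}$ (the empty suffix contributing the constant $1$, a nonempty one contributing the moment $\nu ( X_{i_{j+1}} \cdots X_{i_n} )$). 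Writing $r_{(i_1, \ldots , i_m)} := \cf_{(i_1, \ldots , i_m)} ( R_{\mu \boxright \nu} )$, this gives
\[
\cf_{(i_1, \ldots , i_n)} \bigl( R_{\mu \boxright \nu} \cdot (1 + M_{\nu}) \bigr)
= r_{(i_1, \ldots , i_n)} + \sum_{j=1}^{n-1} r_{(i_1, \ldots , i_j)} \, \nu ( X_{i_{j+1}} \cdots X_{i_n} ),
\]
which by (\ref{eqn:3.11}) and (\ref{eqn:3.14}) must equal the full sum over subsets $S \subseteq \{ 1, \ldots , n \}$ with $S \ni 1$.

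The key step is to reorganize that full sum according to the largest element of $S$. For $S \ni 1$ there are two cases: either $n \in S$, in which case the gap $(s(m), n]$ is empty and these contributions are exactly the terms on the right-hand side of (\ref{eqn:3.21}); or $\max S = j$ for some $1 \leq j \leq n-1$, in which case the final gap is $\{ j+1, \ldots , n \}$, contributing the factor $\nu ( X_{i_{j+1}} \cdots X_{i_n} )$, while $S$ itself is an arbitrary subset of $\{ 1, \ldots , j \}$ containing both $1$ and $j$ whose remaining gaps lie inside $\{ 1, \ldots , j \}$. Denoting by $B_m$ the right-hand side of (\ref{eqn:3.21}) for the tuple of length $m$, and noting that $\max S = j$ makes $\alpha_{(i_1, \ldots , i_n) \vert S}$ coincide with $\alpha_{(i_1, \ldots , i_j) \vert S}$, this yields the decomposition
\[
\sum_{\substack{S \subseteq \{ 1, \ldots , n \} \\ S \ni 1}} \Bigl( \alpha_{(i_1, \ldots , i_n) \vert S} \prod_{\begin{array}{c} {\scriptstyle G \ gap } \\ {\scriptstyle of \ S } \end{array}} \nu ( \cdots ) \Bigr)
= B_n + \sum_{j=1}^{n-1} B_j \, \nu ( X_{i_{j+1}} \cdots X_{i_n} ).
\]

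Finally I would combine the two displays, which show that $r$ and $B$ satisfy the identical triangular recursion, with base case $n=1$ giving $r_{(i_1)} = \alpha_{(i_1)} = B_1$ directly. An induction on $n$, carried out for all index tuples simultaneously, lets the matching lower-order sums $\sum_{j=1}^{n-1} r_{(i_1, \ldots , i_j)} \nu(\cdots)$ and $\sum_{j=1}^{n-1} B_j \nu(\cdots)$ cancel, forcing $r_{(i_1, \ldots , i_n)} = B_n$, which is exactly (\ref{eqn:3.21}). The only delicate point is the bookkeeping in the decomposition step: verifying that the product of $\nu$-moments over the gaps of $S$ splits cleanly into the final gap $\{ j+1, \ldots , n \}$ times the gaps internal to $\{ 1, \ldots , j \}$, and that the restricted coefficient $\alpha_{(i_1, \ldots , i_n) \vert S}$ agrees with the length-$j$ restriction once $\max S = j$ is imposed.
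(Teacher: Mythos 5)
Your proposal is correct and follows essentially the same route as the paper's proof: an induction on $n$, the same expansion of $\cf_{(i_1,\ldots,i_n)}\bigl(R_{\mu \boxright \nu}\cdot(1+M_{\nu})\bigr)$ into a top term plus lower-order coefficients times $\nu$-moments, and the same identification of a subset $S$ with $\max S = j < n$ as a subset of $\{1,\ldots,j\}$ acquiring the extra gap $\{j+1,\ldots,n\}$ (your splitting of the sum in (\ref{eqn:3.14}) by $\max S$ is just the paper's splitting by $S \ni n$ versus $S \not\ni n$, read in the other direction).
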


\begin{proof} 
We will prove the required formula (\ref{eqn:3.21}) by 
induction on $n$.

For $n=1$, (\ref{eqn:3.21}) states that 
$\cf_{(i_1)} (R_{\mu \boxright \nu} ) = \alpha_{i_1}$, 
$\forall \, 1 \leq i_1 \leq k$; this is indeed true, as one 
sees by equating the coefficients of $z_{i_1}$ 
on the two sides of (\ref{eqn:3.11}).

Induction step: we fix an integer $n \geq 2$, we assume that 
(\ref{eqn:3.21}) holds for $1,2, \ldots , n-1$ and we prove 
that it also holds for $n$. So let $i_1, \ldots , i_n$
be some indices in $\{ 1, \ldots , k \}$. The coefficient of 
$z_{i_1} \cdots z_{i_n}$ in 
$R_{\mu \boxright \nu} \cdot (1+ M_{\nu} )$ is equal to:
\begin{equation}  \label{eqn:3.22}
\cf_{ (i_1, \ldots , i_n) } ( R_{\mu \boxright \nu} ) 
+ \sum_{m=1}^{n-1} 
\cf_{ (i_1, \ldots , i_m) } ( R_{\mu \boxright \nu} ) \cdot
\nu ( X_{i_{m+1}} \cdots X_{i_n} ).
\end{equation}
For every $1 \leq m \leq n-1$ the induction hypothesis gives 
us that
\[
\cf_{ (i_1, \ldots , i_m) } ( R_{\mu \boxright \nu} ) \cdot
\nu ( X_{i_{m+1}} \cdots X_{i_n} ) =
\]
\[
\sum_{ \begin{array}{c}
{\scriptstyle S \subseteq \{ 1, \ldots , m \} } \\
{\scriptstyle such \ that \ S \ni 1,m}
\end{array} } \ \alpha_{ (i_1, \ldots , i_m) | S} \cdot
\Bigl( \, \prod_{ \begin{array}{c}
{\scriptstyle G = \{ p, \ldots , q \} } \\
{\scriptstyle gap \ of \ S}
\end{array}  } \nu ( X_{i_p} \cdots X_{i_q} ) \, \Bigr) 
\cdot \nu ( X_{i_{m+1}} \cdots X_{i_n} ) .
\]
In the latter expression
the separate factor $\nu ( X_{i_{m+1}} \cdots X_{i_n} )$ can be 
incorporated into the product over the gaps of $S$, via the simple
trick of treating $S$ as a subset of $\{ 1, \ldots , n \}$
rather than a subset of $\{ 1, \ldots , m \}$. (Indeed, in this 
way $S$ gets the additional gap $\{ m+1, \ldots , n \}$, with 
corresponding factor $\nu ( X_{i_{m+1}} \cdots X_{i_n} )$.)
When this is done and when the resulting formula for 
$\cf_{ (i_1, \ldots , i_m) } ( R_{\mu \boxright \nu} ) \cdot
\nu ( X_{i_{m+1}} \cdots X_{i_n} )$
is substituted in (\ref{eqn:3.22}), we find that:
\begin{equation}  \label{eqn:3.23}
\cf_{ (i_1, \ldots , i_n) } \Bigl( \, R_{\mu \boxright \nu} \cdot 
(1+ M_{\nu}) \, \Bigr) = 
\cf_{ (i_1, \ldots , i_n) } ( R_{\mu \boxright \nu} ) 
\end{equation}
\[
+ \sum_{ \begin{array}{c}
{\scriptstyle S \subseteq \{ 1, \ldots , n \} } \\
{\scriptstyle such \ that \ S \ni 1 \ and \ S \not\ni n}
\end{array} } \ \alpha_{ (i_1, \ldots , i_n) | S} \cdot
\Bigl( \, \prod_{ \begin{array}{c}
{\scriptstyle G = \{ p, \ldots , q \} } \\
{\scriptstyle gap \ of \ S}
\end{array}  } \nu ( X_{i_p} \cdots X_{i_q} ) \Bigr) .
\]
Finally, we equate the right-hand sides of Equations (\ref{eqn:3.23})
and (\ref{eqn:3.14}), and the required formula for 
$\cf_{ (i_1, \ldots , i_n) } \bigl( \, R_{\mu \boxright \nu}\, \bigr)$
follows.
\end{proof}

\begin{proposition}   \label{prop:3.3}
Let $\mu, \nu$ be distributions in $\dalg (k)$. For every 
$n \geq 1$ and $1 \leq i_1, \ldots , i_n \leq k$ one has
\begin{equation}    \label{eqn:3.31}
\cf_{ (i_1, \ldots , i_n) } ( R_{\mu \boxright \nu} ) = 
\sum_{  \begin{array}{c} 
{\scriptstyle \pi \in NC(n),}  \\
{\scriptstyle \pi \leqleq 1_n} 
\end{array} } \ \cf_{ (i_1, \ldots , i_n); \pi ; \oo_{\pi} }
( R_{\mu}, R_{\nu} ),
\end{equation}
where the inner/outer colouring $\oo_{\pi}$ is as in Notation
\ref{def:2.4}.5, and the generalized coefficient 
$\cf_{ (i_1, \ldots , i_n); \pi ; \oo_{\pi} } ( R_{\mu}, R_{\nu} )$
is as in Definition \ref{def:2.11}.3.
\end{proposition}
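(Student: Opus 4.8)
The plan is to start from the explicit formula for $\cf_{(i_1,\ldots,i_n)}(R_{\mu\boxright\nu})$ already established in Lemma \ref{lemma:3.2}, and to reorganize its right-hand side into a sum indexed by non-crossing partitions. Recall that (\ref{eqn:3.21}) writes this coefficient as a sum over subsets $S\subseteq\{1,\ldots,n\}$ with $1,n\in S$, each term being $\alpha_{(i_1,\ldots,i_n)|S}$ times a product of moments $\nu(X_{i_p}\cdots X_{i_q})$ over the gaps of $S$. The guiding idea is that $S$ should become the unique outer block of a partition $\pi\leqleq 1_n$, while the gaps of $S$ get filled in by the inner blocks of $\pi$.

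First I would expand each gap-moment using the free moment-cumulant formula (\ref{eqn:2.141}) for $\nu$: for a gap $G=\{p,\ldots,q\}$ one has $\nu(X_{i_p}\cdots X_{i_q})=\sum_{\sigma_G}\prod_{W\in\sigma_G}\beta_{(i_1,\ldots,i_n)|W}$, where $\sigma_G$ ranges over the non-crossing partitions of $G$ and the $\beta$'s are the free cumulants of $\nu$. Substituting these expansions into (\ref{eqn:3.21}) and interchanging the finite products and sums, the right-hand side of (\ref{eqn:3.21}) turns into a single sum over all data $(S,(\sigma_G)_G)$ consisting of a subset $S\ni 1,n$ together with a choice of $\sigma_G$ for each gap $G$ of $S$; the term attached to such data is $\alpha_{(i_1,\ldots,i_n)|S}$ times the product of $\beta_{(i_1,\ldots,i_n)|W}$ over all blocks $W$ occurring in the various $\sigma_G$.

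The key step, and the only genuinely nontrivial one, is a bijection between such data $(S,(\sigma_G)_G)$ and the partitions $\pi\in NC(n)$ with $\pi\leqleq 1_n$. Forward, I would send $(S,(\sigma_G)_G)$ to the partition $\pi$ whose blocks are $S$ together with all blocks of all the $\sigma_G$. What must be checked is that $\pi$ is non-crossing with $S$ as its unique outer block and all other blocks inner: each block of a $\sigma_G$ is trapped strictly inside a single gap of $S$, so no such block can cross $S$ (an element of $S$ cannot lie in an open gap) nor a block sitting in a different gap (distinct gaps are totally separated by elements of $S$), and each such block $W$ satisfies $1=\min S<\min W$ and $\max W<\max S=n$, hence is inner. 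Conversely, given $\pi\leqleq 1_n$ with unique outer block $V_0$ (which exists by Remark \ref{rem:2.6} and contains $1$ and $n$), I would set $S=V_0$ and let $\sigma_G$ be the restriction of $\pi$ to each gap $G$ of $V_0$. Here the point to verify is that every block $W\neq V_0$ lies inside a single gap: if $W$ straddled some $s\in V_0$ with $\min W<s<\max W$, then, since $n\in V_0\setminus W$ forces $\max W<n$, the indices $\min W<s<\max W<n$ would realize a crossing of $W$ with $V_0$, contradicting $\pi\in NC(n)$. The two assignments are visibly mutually inverse, establishing the bijection.

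Finally I would match the numerical factors. Under the inner/outer colouring $\oo_{\pi}$, the outer block $S=V_0$ gets colour $1$ and contributes $\cf_{(i_1,\ldots,i_n)|S}(R_{\mu})=\alpha_{(i_1,\ldots,i_n)|S}$, while every inner block $W$ gets colour $2$ and contributes $\cf_{(i_1,\ldots,i_n)|W}(R_{\nu})=\beta_{(i_1,\ldots,i_n)|W}$; thus $\cf_{(i_1,\ldots,i_n);\pi;\oo_{\pi}}(R_{\mu},R_{\nu})$ is precisely the product appearing in the reorganized sum of the second paragraph. Summing over $\pi\leqleq 1_n$ and transporting the sum through the bijection of the third paragraph therefore reproduces the right-hand side of (\ref{eqn:3.21}), which equals $\cf_{(i_1,\ldots,i_n)}(R_{\mu\boxright\nu})$ by Lemma \ref{lemma:3.2}. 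Apart from the combinatorial bijection, everything is bookkeeping with the moment-cumulant formula and the definition of the mixed generalized coefficient, so I expect the combinatorial identification of gap-blocks with inner blocks to be the main obstacle.
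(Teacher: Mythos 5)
Your proposal is correct and follows essentially the same route as the paper: starting from Lemma \ref{lemma:3.2}, expanding each gap-moment via the free moment-cumulant formula (\ref{eqn:2.141}), and assembling $S$ together with the gap partitions into a partition $\pi \leqleq 1_n$ whose unique outer block is $S$, so that the resulting product is exactly $\cf_{(i_1,\ldots,i_n);\pi;\oo_\pi}(R_\mu,R_\nu)$. The only cosmetic difference is organizational — the paper proves the identity separately for each fixed $S$ (its Equation (\ref{eqn:3.32}), summing over $\pi$ with $S \in \pi$) and then sums over $S$, whereas you set up the bijection globally; the combinatorial content, including the verification that gap-blocks are precisely the inner blocks, is the same.
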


\begin{proof} We will use the various notations introduced in 
Remark \ref{rem:3.1} and Lemma \ref{lemma:3.2} above. 

Let us pick a subset $S \subseteq \{ 1, \ldots , n \}$ such that 
$S \ni 1,n$, and let us prove that
\begin{equation}    \label{eqn:3.32}
\alpha_{ (i_1, \ldots , i_n) | S} \cdot
\Bigl( \, \prod_{ \begin{array}{c}
{\scriptstyle G = \{ p, \ldots , q \} } \\
{\scriptstyle gap \ of \ S}
\end{array}  } \nu ( X_{i_p} \cdots X_{i_q} ) \Bigr) 
= \sum_{  \begin{array}{c} 
{\scriptstyle \pi \in NC(n)} \ such  \\
{\scriptstyle that  \ S \in \pi} 
\end{array} } \ \cf_{ (i_1, \ldots , i_n); \pi ; \opi }
( R_{\mu}, R_{\nu} ).
\end{equation}
In order to verify (\ref{eqn:3.32}),
let us write explicitly $S = \{ s(1), s(2), \ldots , s(m) \}$
with $1 = s(1) < s(2) < \cdots < s(m) =n$; then the gaps of $S$ 
are listed as $G_1, \ldots , G_{m-1}$, with
\[
G_j = \{ p_j, \ldots , q_j \} = (s(j), s(j+1)) \cap \bZ
\mbox{ for } 1 \leq j \leq m-1,
\]
and the left-hand side of (\ref{eqn:3.32}) becomes
\begin{equation}    \label{eqn:3.33}
\alpha_{ ( i_1, \ldots , i_n ) | S } \cdot
\prod_{j=1}^{m-1} 
\nu ( X_{i_{p_j}} \cdots X_{i_{q_j}} ) 
\end{equation}
(with the same convention as used above, that 
``$\nu ( X_{i_{p_j}} \cdots X_{i_{q_j}} )$'' is to be read as 1 in 
the case when $G_j = \emptyset$). Now in (\ref{eqn:3.33}) let us 
use the free moment-cumulant formula (\ref{eqn:2.141}) to express 
the moments $\nu ( X_{i_{p_j}} \cdots X_{i_{q_j}} )$ in terms of the 
coefficients of $R_{\nu}$; we get
\[
\alpha_{ (i_1, \ldots , i_n) \mid S } \cdot
\prod_{j=1}^{m-1}  \Bigl( \, \sum_{ \pi_j \in NC(|G_j|) } \,
\cf_{ (i_{p_j}, \ldots , i_{q_j}); \pi_j } (R_{\nu}) \, \Bigr)
\]
\begin{equation}    \label{eqn:3.34}
= \sum_{ \begin{array}{c}
{\scriptstyle \pi_1 \in NC(|G_1|), \ldots }    \\
{\scriptstyle \ldots , \pi_{m-1} \in NC(|G_{m-1}|) } 
\end{array} } \ \Bigl( 
\cf_{ (i_1, \ldots , i_n) \mid S } (R_{\mu}) \cdot
\prod_{j=1}^{m-1} 
\cf_{ (i_1, \ldots , i_n) \mid G_j ); \pi_j} (R_{\nu})  \Bigr).
\end{equation}
But a family of non-crossing partitions 
$\pi_1 \in NC(|G_1|), \ldots , \pi_{m-1} \in NC(|G_{m-1}|)$
is naturally assembled, together with $S$, into one non-crossing
partition $\pi \in NC(n)$; and all partitions $\pi \in NC(n)$ 
such that $S \in \pi$ are obtained in this way, without 
repetitions. Moreover, when $\pi_1, \ldots , \pi_{m-1}$ and $S$ 
are assembled together into $\pi$, it is clear that the 
big product from (\ref{eqn:3.34}) becomes just
$\cf_{ (i_1, \ldots , i_n); \pi; \oo_{\pi} } (R_{\mu}, R_{\nu})$.
Hence the substitution 
$( \pi_1, \ldots , \pi_{m-1} ) \leftrightarrow \pi$ leads to the
right-hand side of (\ref{eqn:3.32}), and this completes the proof 
that (\ref{eqn:3.32}) holds.

Finally, we sum over $S$ on both sides of (\ref{eqn:3.32}), with 
$S$ running in the collection of all subsets of $\{ 1, \ldots ,n \}$
which contain $1$ and $n$. The sum on the left-hand side gives
$\cf_{ (i_1, \ldots , i_n) } ( R_{\mu \boxright \nu} )$ by
Lemma \ref{lemma:3.2}, while the sum on the right-hand side  
takes us precisely to the right-hand side of (\ref{eqn:3.31}), 
as we wanted.
\end{proof}

It will come in handy to also have an extended version of the 
formula found in Proposition \ref{prop:3.3}, which covers the
generalized coefficients 
``$\cf_{ (i_1, \ldots , i_n); \rho}$'' of the $R$-transform of 
$\mu \boxright \nu$. This is presented in Lemma \ref{lemma:3.6},
and uses the following extension for the concept
of inner/outer colouring of a non-crossing partition.

\begin{notation}   \label{def:3.4}
Let $n$ be a positive integer and let $\pi, \rho$ be partitions
in $NC(n)$ such that $\pi \leqleq \rho$. We denote by 
$\oo_{\pi , \rho}$ the colouring of $\pi$ defined by
\begin{equation}   \label{eqn:3.41}
\oo_{\pi ; \rho} (V) = 
\left \{   \begin{array}{cl}
1,  & \mbox{if $V$ is $\rho$-special}   \\
2,  & \mbox{if $V$ is not $\rho$-special,} 
\end{array}   \right.   \ \ \ V \in \pi ,
\end{equation}
where the concept of ``being $\rho$-special'' for a block of $\pi$
is as in Definition \ref{def:2.7}.
\end{notation}

\begin{remark}    \label{rem:3.5}
Let $\pi$ be a partition in $NC(n)$ and let $\rho$ be the unique 
interval-partition with the property that $\rho \geqgeq \pi$. Then 
the colouring $\oo_{\pi, \rho}$ defined above is just the usual 
inner/outer colouring $\oo_{\pi}$ -- indeed, in this case a block
$V$ of $\pi$ is $\rho$-special if and only if it is outer.
\end{remark}

\begin{lemma}   \label{lemma:3.6}
Let $\mu, \nu$ be distributions in $\dalg (k)$. For every 
$n \geq 1$, $\rho \in NC(n)$ and $1 \leq i_1, \ldots , i_n \leq k$ 
one has
\begin{equation}    \label{eqn:3.61}
\cf_{ (i_1, \ldots , i_n); \rho } ( R_{\mu \boxright \nu} ) = 
\sum_{  \begin{array}{c} 
{\scriptstyle \pi \in NC(n),}  \\
{\scriptstyle \pi \leqleq \rho} 
\end{array} } \ \cf_{ (i_1, \ldots , i_n); \pi ; \oo_{\pi, \rho} }
( R_{\mu}, R_{\nu} ).
\end{equation}
\end{lemma}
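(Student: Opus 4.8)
The plan is to derive Lemma~\ref{lemma:3.6} from its special case $\rho = 1_n$ --- which is exactly Proposition~\ref{prop:3.3} --- by using that the generalized coefficient $\cf_{(i_1,\ldots,i_n);\rho}$ is multiplicative over the blocks of $\rho$, together with the poset factorization of $\{ \pi \leq \rho \}$ recorded in Remark~\ref{rem:2.5}. In effect, the statement for a general $\rho$ is obtained by applying the $\rho = 1_n$ statement separately inside each block of $\rho$ and then reassembling.

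First I would expand the left-hand side of (\ref{eqn:3.61}) as
\[
\cf_{(i_1,\ldots,i_n);\rho}(R_{\mu\boxright\nu}) =
\prod_{W\in\rho}\cf_{(i_1,\ldots,i_n)|W}(R_{\mu\boxright\nu}),
\]
and apply Proposition~\ref{prop:3.3} to each factor. For a block $W = \{ w_1 < \cdots < w_m \}$, read as a partition problem on the relabelled set $\{ 1, \ldots , m \}$, this rewrites $\cf_{(i_1,\ldots,i_n)|W}(R_{\mu\boxright\nu})$ as a sum over partitions $\sigma_W \in NC(m)$ with $\sigma_W \leqleq 1_m$ of the mixed coefficients $\cf_{(i_1,\ldots,i_n)|W;\,\sigma_W;\,\oo_{\sigma_W}}(R_\mu,R_\nu)$.

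Next I would multiply out the resulting product of sums, so that each term is indexed by a family $(\sigma_W)_{W\in\rho}$ assigning to every block $W$ of $\rho$ a partition $\sigma_W \leqleq 1_{|W|}$. By the isomorphism (\ref{eqn:2.51}) such a family assembles, bijectively and without repetition, into a single $\pi \in NC(n)$ with $\pi \leq \rho$. Moreover the block-wise constraints $\sigma_W \leqleq 1_{|W|}$ are, by the characterization in Remark~\ref{rem:2.6}, precisely the requirement that $\min(W)$ and $\max(W)$ lie in a common block of $\pi$ for every $W \in \rho$; so the whole family satisfies its constraints if and only if the assembled $\pi$ satisfies $\pi \leqleq \rho$. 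This reproduces exactly the index set on the right-hand side of (\ref{eqn:3.61}).

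The step requiring the most care is checking that the colourings match, i.e. that $\prod_{W\in\rho}\cf_{(i_1,\ldots,i_n)|W;\,\sigma_W;\,\oo_{\sigma_W}}(R_\mu,R_\nu)$ collapses to $\cf_{(i_1,\ldots,i_n);\pi;\oo_{\pi,\rho}}(R_\mu,R_\nu)$. Here the key observation is that a block $V$ of $\pi$ lying inside a block $W$ of $\rho$ is $\rho$-special exactly when $\min(V) = \min(W)$ and $\max(V) = \max(W)$ (a block of $\pi$ is contained in a single block of $\rho$ since $\pi \leq \rho$, so no other block of $\rho$ can witness $\rho$-specialness). Since $\sigma_W \leqleq 1_{|W|}$ forces $\sigma_W$ to have a unique outer block --- the one containing the relabelled endpoints $1$ and $|W|$ --- that outer block is the unique $\rho$-special block of $\pi$ inside $W$, while every other (inner) block of $\sigma_W$ fails to be $\rho$-special. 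Hence $\oo_{\pi,\rho}(V) = \oo_{\sigma_W}(V')$ for the block $V'$ of $\sigma_W$ corresponding to $V$, so each block is assigned the same colour --- and therefore the same choice of $R_\mu$ versus $R_\nu$ --- on both sides. Summing the matched terms over all $\pi \leqleq \rho$ then yields (\ref{eqn:3.61}).
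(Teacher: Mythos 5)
Your proposal is correct and follows essentially the same route as the paper's proof: expand $\cf_{(i_1,\ldots,i_n);\rho}$ multiplicatively over the blocks of $\rho$, apply Proposition~\ref{prop:3.3} inside each block, and reassemble via the poset isomorphism (\ref{eqn:2.51}) into partitions $\pi \leqleq \rho$ with the colouring $\oo_{\pi,\rho}$. The only difference is that you spell out the two verifications the paper labels ``immediate'' --- that the blockwise constraints $\sigma_W \leqleq 1_{|W|}$ assemble to $\pi \leqleq \rho$, and that $\rho$-special blocks are exactly the blockwise outer ones --- and both verifications are carried out correctly.
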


\begin{proof}
Let us write explicitly $\rho = \{ W_1, \ldots , W_q \}$. Then
\[
\cf_{ (i_1, \ldots , i_n); \rho } ( R_{\mu \boxright \nu} ) = 
\prod_{j=1}^q 
\cf_{ (i_1, \ldots , i_n) | W_j } ( R_{\mu \boxright \nu} ) 
\]
\[
= \prod_{j=1}^q \Bigl( \, \sum_{ \begin{array}{c}
{\scriptstyle \pi_j \in NC( |W_j| ), } \\
{\scriptstyle \pi_j \leqleq 1_{|W_j|} }
\end{array}  } \,
\cf_{ ((i_1, \ldots , i_n) | W_j); \pi_j; \oo_{\pi_j} } 
( R_{\mu} , R_{\nu} ) \, \Bigr) 
\]
\begin{equation}    \label{eqn:3.62}
= \sum_{\begin{array}{c}
{\scriptstyle  \pi_1 \in NC( |W_1| ), 
               \pi_1 \leqleq 1_{|W_1|}, \ldots }   \\
{\scriptstyle  \ldots , \pi_q \in NC( |W_q| ), 
                        \pi_q \leqleq 1_{|W_q|} }   
\end{array} } \
\Bigl( \ \prod_{j=1}^q 
\cf_{ ((i_1, \ldots , i_n) | W_j); \pi_j; \oo_{\pi_j} } 
( R_{\mu} , R_{\nu} ) \ \Bigr) .
\end{equation}

Now let us consider the bijection (\ref{eqn:2.51}) from
Remark \ref{rem:2.5}. It is immediate that if
$\pi \leftrightarrow ( \pi_1, \ldots , \pi_q )$ via this
bijection, then
\[
\prod_{j=1}^q 
\cf_{ ((i_1, \ldots , i_n) | W_j); \pi_j; \oo_{\pi_j} } 
( R_{\mu} , R_{\nu} ) = 
\cf_{ (i_1, \ldots , i_n); \pi; \oo_{\pi, \rho} } 
( R_{\mu} , R_{\nu} ).
\]
Thus when in (\ref{eqn:3.62}) we perform the change of variable
given by the bijection from (\ref{eqn:2.51}), we arrive precisely
to the right-hand side of (\ref{eqn:3.61}), as required.
\end{proof}

On our way towards the formula for moments stated in Theorem 
\ref{thm:1.3} we next put into evidence an explicit formula for the 
Boolean cumulants of $\mu \boxright \nu$.

\begin{proposition}   \label{prop:3.7}
Let $\mu, \nu$ be distributions in $\dalg (k)$. For every 
$n \geq 1$ and $1 \leq i_1, \ldots , i_n \leq k$ one has
\begin{equation}    \label{eqn:3.71}
\cf_{ (i_1, \ldots , i_n) } ( \eta_{\mu \boxright \nu} ) = 
\sum_{  \begin{array}{c} 
{\scriptstyle \pi \in NC(n), \ \pi \leqleq 1_n}  \\
{\scriptstyle with \ outer \ block \ V_o} 
\end{array} } \ \
\sum_{  \begin{array}{c} 
{\scriptstyle c: \pi \to \{ 1, 2 \} \ such } \\
{\scriptstyle that \ c(V_o) =1}
\end{array} } \ \
\cf_{ (i_1, \ldots , i_n); \pi ; c }
( R_{\mu}, R_{\nu} ).
\end{equation}
Moreover, for every $\pi \in NC(n)$, $\pi \leqleq 1_n$ with outer
block $V_o$, one has:
\begin{equation}    \label{eqn:3.72}
\sum_{  \begin{array}{c} 
{\scriptstyle c: \pi \to \{ 1, 2 \} \ such } \\
{\scriptstyle that \ c(V_o) =1}
\end{array} } \ \
\cf_{ (i_1, \ldots , i_n); \pi ; c }
( R_{\mu}, R_{\nu} )
= \cf_{ (i_1, \ldots , i_n); \pi ; \opi }
( R_{\mu}, R_{\mu} + R_{\nu} ).
\end{equation}
Hence Equation (\ref{eqn:3.71}) can also be written in the form
\begin{equation}    \label{eqn:3.73}
\cf_{ (i_1, \ldots , i_n) } ( \eta_{\mu \boxright \nu} ) = 
\sum_{  \begin{array}{c} 
{\scriptstyle \pi \in NC(n),}  \\
{\scriptstyle \pi \leqleq 1_n} 
\end{array} } \ \cf_{ (i_1, \ldots , i_n); \pi ; \opi }
( R_{\mu}, R_{\mu} + R_{\nu} ).
\end{equation}
\end{proposition}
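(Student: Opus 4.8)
The plan is to express the Boolean cumulants of $\mu \boxright \nu$ through its free cumulants, insert the formula for the generalized coefficients of $R_{\mu \boxright \nu}$ from Lemma \ref{lemma:3.6}, and then reorganize the resulting double sum by means of the bijection in Proposition \ref{prop:2.8}. Concretely, I would first apply the free-to-Boolean relation (\ref{eqn:2.143}) to the distribution $\mu \boxright \nu$, obtaining $\cf_{ (i_1, \ldots , i_n) } ( \eta_{\mu \boxright \nu} ) = \sum_{ \rho \in NC(n), \, \rho \leqleq 1_n } \cf_{ (i_1, \ldots , i_n); \rho } ( R_{\mu \boxright \nu} )$, and then substitute Lemma \ref{lemma:3.6} into each summand to reach the double sum $\sum_{ \rho \leqleq 1_n } \sum_{ \pi \leqleq \rho } \cf_{ (i_1, \ldots , i_n); \pi ; \oo_{\pi, \rho} } ( R_{\mu}, R_{\nu} )$.

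Since $\leqleq$ is a partial order (Remark \ref{rem:2.6}), I can interchange the two sums and sum first over a fixed $\pi$ -- which is then forced to satisfy $\pi \leqleq 1_n$ and hence has a unique outer block $V_o$ -- and afterwards over $\rho$ in the interval $\{ \rho \mid \pi \leqleq \rho \leqleq 1_n \}$. The crux is to evaluate this inner sum, and here I would invoke Proposition \ref{prop:2.8}: the assignment $\rho \mapsto \{ V \in \pi \mid V \mbox{ is } \rho\mbox{-special} \}$ is a bijection from $\{ \rho \mid \pi \leqleq \rho \leqleq 1_n \}$ onto the set of subsets of $\pi$ that contain $V_o$. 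Since $\oo_{\pi, \rho}$ assigns colour $1$ to a block precisely when that block is $\rho$-special (Notation \ref{def:3.4}), this bijection reindexes the $\rho$-sum as a sum over colourings $c : \pi \to \{ 1, 2 \}$ subject to $c(V_o) = 1$, with $\oo_{\pi, \rho}$ corresponding to $c$; this produces (\ref{eqn:3.71}) at once. I expect the one point needing care to be the compatibility check that the image condition ``$\fV \ni V_o$'' in Proposition \ref{prop:2.8} matches precisely the constraint $c(V_o) = 1$, i.e. that the unique outer block is always $\rho$-special.

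The identity (\ref{eqn:3.72}) is an independent algebraic expansion. Unfolding the mixed generalized coefficient from Definition \ref{def:2.11}.3 and using that $\oo_{\pi}$ colours $V_o$ by $1$ and every inner block by $2$, its right-hand side equals $\cf_{ (i_1, \ldots , i_n) | V_o } ( R_{\mu} ) \cdot \prod_{ V \mbox{ inner} } \cf_{ (i_1, \ldots , i_n) | V } ( R_{\mu} + R_{\nu} )$. The coefficient functionals $\cf_{ (i_1, \ldots , i_n) | V }$ are linear, so each inner factor splits as $\cf_{ (i_1, \ldots , i_n) | V } ( R_{\mu} ) + \cf_{ (i_1, \ldots , i_n) | V } ( R_{\nu} )$; expanding the product over the inner blocks then reproduces exactly the sum over colourings $c$ with $c(V_o) = 1$ appearing on the left-hand side. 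Substituting (\ref{eqn:3.72}) into (\ref{eqn:3.71}) finally yields (\ref{eqn:3.73}), which completes the argument.
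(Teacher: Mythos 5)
Your proposal is correct and follows essentially the same route as the paper's own proof: both derive (\ref{eqn:3.71}) by applying the free-to-Boolean formula (\ref{eqn:2.143}) to $\mu \boxright \nu$, substituting Lemma \ref{lemma:3.6}, interchanging the order of summation, and reindexing the inner sum over $\{\rho \mid \pi \leqleq \rho \leqleq 1_n\}$ via the bijection of Proposition \ref{prop:2.8} (under which $\rho \mapsto \oo_{\pi,\rho}$ hits exactly the colourings with $c(V_o)=1$), while (\ref{eqn:3.72}) is handled as a direct multilinear expansion. The compatibility point you flag is precisely what Proposition \ref{prop:2.8} asserts, so no extra verification is needed.
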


\begin{proof} It is immediate that the left-hand side of 
(\ref{eqn:3.72}) is merely the expansion as a sum for the product
which defines $\cf_{ (i_1, \ldots , i_n); \pi ; \opi }
( R_{\mu}, R_{\mu} + R_{\nu} )$. Hence the only non-trivial point 
in this proof is to verify that (\ref{eqn:3.71}) holds.

By using how $\cf_{ (i_1, \ldots , i_n) } 
\bigl( \eta_{\mu \boxright \nu} \bigr)$ is written in terms of 
the coefficients of $R_{\mu \boxright \nu}$ (cf. Equation 
(\ref{eqn:2.143}) in Remark \ref{rem:2.14}), then 
by invoking Lemma \ref{lemma:3.6} and by performing an obvious 
change in the order of summation we get that
\[
\cf_{ (i_1, \ldots , i_n) } ( \eta_{\mu \boxright \nu} ) 
= \sum_{ \begin{array}{c}
{\scriptstyle \rho \in NC(n),}   \\
{\scriptstyle \rho \leqleq 1_n}
\end{array} } \,
\cf_{ (i_1, \ldots , i_n); \rho } ( R_{\mu \boxright \nu} )
\]
\[
= \sum_{ \begin{array}{c}
{\scriptstyle \rho \in NC(n),}   \\
{\scriptstyle \rho \leqleq 1_n}
\end{array} } \,
\Bigl( \,
\sum_{  \begin{array}{c} 
{\scriptstyle \pi \in NC(n),}  \\
{\scriptstyle \pi \leqleq \rho} 
\end{array} } \ \cf_{ (i_1, \ldots , i_n); \pi ; \oo_{\pi, \rho} }
( R_{\mu}, R_{\nu} ) \, \Bigr)
\]
\[
= \sum_{ \begin{array}{c}
{\scriptstyle \pi \in NC(n),}   \\
{\scriptstyle \pi \leqleq 1_n}
\end{array} } \,
\Bigl( \, \sum_{  \begin{array}{c} 
{\scriptstyle \rho \in NC (n) \ such}  \\
{\scriptstyle that \ \pi \leqleq \rho \leqleq 1_n} 
\end{array} } \ \cf_{ (i_1, \ldots , i_n); \pi ; \oo_{\pi, \rho} }
( R_{\mu}, R_{\nu} ) \, \Bigr) .
\]
In order to conclude the proof we are left to show that for every 
partition $\pi \in NC(n)$ with $\pi \leqleq 1_n$ and with outer 
block denoted $V_0$ one has
\begin{equation}    \label{eqn:3.74}
\sum_{  \begin{array}{c} 
{\scriptstyle \rho \in NC (n) \ such}  \\
{\scriptstyle that \ \pi \leqleq \rho \leqleq 1_n} 
\end{array} } \ \cf_{ (i_1, \ldots , i_n); \pi ; \oo_{\pi, \rho} }
( R_{\mu}, R_{\nu} ) 
= \sum_{  \begin{array}{c} 
{\scriptstyle c: \pi \to \{ 1,2 \} }  \\
{\scriptstyle such \ that \ c(V_o)=1}
\end{array} } \ \cf_{ (i_1, \ldots , i_n); \pi ; c }
( R_{\mu}, R_{\nu} ).
\end{equation}
And indeed, recall from Proposition \ref{prop:2.8} 
that we have a bijection
\[
\begin{array}{rcl}
\{ \rho \in NC(n) \mid \pi \leqleq \rho \leqleq 1_n \} &
       \to & \{ \fV \subseteq \pi \mid \fV \ni V_0 \}       \\
\rho & \mapsto & \{ V \in \pi \mid V \mbox{ is $\rho$-special} \} .
\end{array}
\]
When comparing this bijection against the formula which defined 
$\oo_{\pi , \rho}$ in Notation \ref{def:3.4}, it is immediate that 
the map $\rho \mapsto \oo_{\pi , \rho}$ is itself a bijection from
$\{ \rho \in NC(n) \mid \pi \leqleq \rho \leqleq 1_n \}$ onto
the set of colourings $\{ c: \pi \to \{ 1,2 \} \mid c(V_0) =1 \}$,
and (\ref{eqn:3.74}) immediately follows.
\end{proof}

\begin{remark}   \label{rem:3.8}

$1^o$ When considered together, Equations (\ref{eqn:3.73}) and 
(\ref{eqn:3.31}) give that 
\begin{equation}    \label{eqn:3.81}
\eta_{\mu \boxright \nu} = R_{\mu \boxright ( \mu \boxplus \nu )};
\end{equation}
the latter formula is in turn telling us that 
\begin{equation}    \label{eqn:3.82}
\bB ( \mu \boxright \nu ) = \mu \boxright ( \mu \boxplus \nu ),
\end{equation}
where $\bB$ is the Boolean Bercovici-Pata bijection on $\dalg (k)$.
Equation (\ref{eqn:3.82}) is a special case of Proposition 
\ref{prop:1.10} of the introduction; but actually the general case 
of Proposition \ref{prop:1.10} easily follows from here, as 
explained in the proof of Proposition \ref{prop:5.1} below.

$2^o$ In the same way as the statement of Proposition \ref{prop:3.3}
was extended to the one of Lemma \ref{lemma:3.6}, the formula 
found in Proposition \ref{prop:3.7} can be extended to
\begin{equation}    \label{eqn:3.83}
\cf_{ (i_1, \ldots , i_n); \rho } ( \eta_{\mu \boxright \nu} ) = 
\sum_{  \begin{array}{c} 
{\scriptstyle \pi \in NC(n),}  \\
{\scriptstyle \pi \leqleq \rho} 
\end{array} } \ \cf_{ (i_1, \ldots , i_n); \pi ; \oo_{\pi, \rho} }
( R_{\mu}, R_{\mu} + R_{\nu} ),
\end{equation}
holding for every $n \geq 1$, $\rho \in NC(n)$, and 
$1 \leq i_1, \ldots , i_n \leq k$.
Equation (\ref{eqn:3.83}) can be obtained from (\ref{eqn:3.73}) by 
an argument similar to the one used in the proof of Lemma 
\ref{lemma:3.6}; but in fact we don't need to repeat that argument,
we can simply infer (\ref{eqn:3.83}) by using Lemma \ref{lemma:3.6} 
itself, in conjunction to Equation (\ref{eqn:3.81})
from the first part of the present remark.
\end{remark}

\vspace{6pt}

It is now easy to obtain the moment formula stated in Theorem 
\ref{thm:1.3} of the introduction.

\begin{proposition}   \label{prop:3.9}
Let $\mu, \nu$ be distributions in $\dalg (k)$. For every 
$n \geq 1$ and $1 \leq i_1, \ldots , i_n \leq k$ one has
\begin{equation}    \label{eqn:3.91}
( \mu \boxright \nu ) (X_{i_1} \cdots X_{i_n}) =
\sum_{ \pi \in NC(n) } \cf_{ (i_1, \ldots , i_n); \pi ; \opi }
( R_{\mu}, R_{\mu} + R_{\nu} ).
\end{equation}
\end{proposition}

\begin{proof} By using the Boolean moment-cumulant formula
(Equation (\ref{eqn:2.142}) in Remark \ref{rem:2.14}),
then by invoking Remark \ref{rem:3.8}.2 and by performing an 
obvious change in the order of summation we get that
\[
( \mu \boxright \nu ) (X_{i_1} \cdots X_{i_n}) =
\sum_{ \rho \in \Int (n) } \cf_{ (i_1, \ldots , i_n); \rho }
( \eta_{\mu \boxright \nu} )
\]
\[
= \sum_{ \rho \in \Int (n) }  \Bigl( \,
\sum_{  \begin{array}{c} 
{\scriptstyle \pi \in NC(n),}  \\
{\scriptstyle \pi \leqleq \rho} 
\end{array} } \ \cf_{ (i_1, \ldots , i_n); \pi ; \oo_{\pi, \rho} }
( R_{\mu}, R_{\mu} + R_{\nu} ) \, \Bigr)
\]
\begin{equation}   \label{eqn:3.92}
= \sum_{ \pi \in NC(n) }  \Bigl( \,
\sum_{  \begin{array}{c} 
{\scriptstyle \rho \in \Int (n),}  \\
{\scriptstyle \rho \geqgeq \pi} 
\end{array} } \ \cf_{ (i_1, \ldots , i_n); \pi ; \oo_{\pi, \rho} }
( R_{\mu}, R_{\mu} + R_{\nu} ) \, \Bigr) .
\end{equation}
But for every $\pi \in NC(n)$ there exists a unique partition 
$\rho \in \Int (n)$ such that $\rho \geqgeq \pi$, and for this $\rho$
we have $\oo_{\pi , \rho} = \oo_{\pi}$ (as observed in Remark 
\ref{rem:3.5}). Thus the sum over $\rho$ in (\ref{eqn:3.92})
consists of just one term, 
$\cf_{ (i_1, \ldots , i_n); \pi ; \oo_{\pi} }
( R_{\mu}, R_{\mu} + R_{\nu} )$, and (\ref{eqn:3.91}) follows.
\end{proof}

\begin{remark}    \label{rem:3.10}

$1^o$ A summation of the same type as in Equation (\ref{eqn:3.91}), 
which uses coefficients from two series and distinguishes between the 
inner and outer blocks of $\pi \in NC(n)$, has previously appeared in 
the theory of $c$-free convolution -- see e.g. the third 
displayed equation on p. 366 of \cite{BLS96}. This connection is 
not pursued in the present paper, but $c$-free convolution is 
heavily used in \cite{A08} (which relates to the present paper in 
the way explained in Remark \ref{rem:1.14} of the introduction).

$2^o$ In the proof of Theorem \ref{thm:4.4} of the next section 
we will also need the equivalent form of Equation (\ref{eqn:3.91}) 
where, for every $\pi \in NC(n)$, the product defining 
$\cf_{ (i_1, \ldots , i_n); \pi; \oo_{\pi} } 
(R_{\mu}, R_{\mu} + R_{\nu})$
is expanded into a sum. It is immediate (left as exercise to the 
reader) to check that the formula for the moments of 
$\mu \boxright \nu$ will then look as follows:
\begin{equation}   \label{eqn:3.101}
( \mu \boxright \nu ) ( X_{i_1} \cdots X_{i_n} )
= \sum_{ ( \pi , c ) }
\cf_{ (i_1, \ldots , i_n); \pi ; c } ( R_{\mu} , R_{\nu} ) ,
\end{equation}
where the index set for the sum on the right-hand side of 
(\ref{eqn:3.101}) is
\[
\left\{  ( \pi , c ) \
\begin{array}{cl}
\vline & \pi \in NC(n), \ c: \pi \to \{ 1,2 \}, \mbox{ such that} \\
\vline & \mbox{$c(V)=1$ for every outer block $V$ of $\pi$}
\end{array} \right\}  .
\]
\end{remark}

\begin{remark}    \label{rem:3.11}
Let $\mu$ and $( \mu_N )_{N \geq 1}$ be in $\dalg (k)$. If 
\begin{equation}    \label{eqn:3.111}
\lim_{N \to \infty}  \mu_N (X_{i_1} \cdots X_{i_n}) = 
\mu (X_{i_1} \cdots X_{i_n}) , \ \ \ \forall \, 
n \geq 1, \ \forall \, 1 \leq i_1, \ldots , i_n \leq k,
\end{equation}
then one says that the sequence $( \mu_N )_{N \geq 1}$ 
{\em converges in distribution} to $\mu$ (denoted simply as
$\mu_N \to \mu$). Due to the moment-cumulant 
formulas from Remark \ref{rem:2.14}, this is equivalent to convergence
in coefficients for the $R$-transforms $R_{\mu_N}$ to $R_{\mu}$, or 
for the $\eta$-series $\eta_{\mu_N}$ to $\eta_{\mu}$.

Now, from the fact that one has polynomial expressions giving the 
moments of $\mu \boxright \nu$ in terms of the free cumulants of 
$\mu$ and of $\nu$ it is immediate that the operation $\boxright$
is well-behaved under taking limits in distribution in 
$\dalg (k)$. That is, if $\mu, \nu$, 
$( \mu_N )_{N=1}^{\infty}$ and $( \nu_N )_{N=1}^{\infty}$ are 
distributions in $\dalg (k)$ such that $\mu_N \to \mu$ and 
$\nu_N \to \nu$, then it follows that
$\mu_N \boxright \nu_N \to \mu \boxright \nu$. The same conclusion
could have been of course derived directly from Proposition 
\ref{prop:3.3}, or from Proposition \ref{prop:3.7}.
\end{remark}

$\ $

$\ $

\begin{center}
{\bf\large 4. The approach to $\boxright$ via operator models}
\end{center}
\setcounter{section}{4}
\setcounter{equation}{0}
\setcounter{theorem}{0}

This section puts into evidence a full Fock space model for 
$\mu \boxright \nu$, then uses this model in order to obtain 
Theorem \ref{thm:1.4} stated in the introduction of the paper.

The full Fock space model is given in Theorem \ref{thm:4.4}, and
is just a variation of the ``standard'' full Fock space model for 
the $R$-transform (as presented for instance in Lecture 21 of 
\cite{NS06}). In order to avoid tedious notations involving formal 
operators on the full Fock spoace, we will only consider this model 
in the special case when the $R$-transforms 
$R_{\mu}$ and $R_{\nu}$ are polynomials. A more general statement 
could be obtained from this special case by doing approximations 
in distribution (a very similar procedure to how Theorem 21.4 is 
extended to Theorem 21.7 in Lecture 21 of \cite{NS06}). However, 
for the situation at hand it is actually more convenient to 
incorporate the necessary approximations in distribution directly 
into the proof of Theorem \ref{thm:4.9} below, where the full Fock 
space model is upgraded to the more general framework of Theorem 
\ref{thm:1.4}.

\begin{notation}   \label{def:4.1}
Let $\cF$ be the full Fock space over $\bC^{2k}$,
\[
\cF := \bC \oplus \bC^{2k} 
\oplus \bigl( \, \bC^{2k} \, \bigr)^{\otimes 2} \oplus \cdots
\oplus \bigl( \, \bC^{2k} \, \bigr)^{\otimes n} \oplus \cdots 
\]
The vector 
$1 \oplus 0 \oplus 0 \oplus \cdots \oplus 0 \oplus \cdots$
is called the {\em vacuum-vector} of $\cF$ and is denoted by 
$\Omega$. We will let $\pvac \in B( \cF )$ denote the orthogonal 
projection onto the 1-dimensional space $\bC \Omega \subseteq \cF$.
The vector-state 
$T \mapsto \langle T \Omega \, , \, \Omega \rangle$
defined by $\Omega$ on $B( \cF )$ will be referred to as 
{\em vacuum-state}.

We fix an orthonormal basis for $\bC^{2k}$, which we denote as 
$e_1 ', \ldots , e_k ', e_1 '', \ldots , e_k ''$. This leads to
a natural choice of orthonormal basis for $\cF$,
\begin{equation}   \label{eqn:4.11}
\{ \Omega \} \cup \Bigl\{ \, \xi_1 \otimes \cdots \otimes \xi_n 
\mid n \geq 1, \ \xi_1, \ldots , \xi_n \in \{ 
e_1 ', \ldots , e_k ', e_1 '', \ldots , e_k \} \, \Bigr\} .
\end{equation}
For every $1 \leq i \leq k$ the left creation operators with
$e_i'$ and $e_i''$ will be denoted by $L_i'$ and $L_i''$,
respectively. So $L_i' \in B( \cF )$ is the isometry which acts 
on the orthonormal basis (\ref{eqn:4.11}) by
\[
L_i' ( \Omega ) = e_i', \ \ \
L_i' ( \xi_1 \otimes \cdots \otimes \xi_n ) = 
e_i' \otimes \xi_1 \otimes \cdots \otimes \xi_n,
\]
and similar formulas hold for $L_i''$. Moreover, we will denote
by $\fM '$ and $\fM ''$ the sets of operators in $B ( \cF )$ 
defined by:
\begin{equation}   \label{eqn:4.12}
\left\{   \begin{array}{ccl}
\fM ' & := & \{ L_{i_1}' \cdots L_{i_n}'  \mid
n \geq 1, \ 1 \leq i_1, \ldots , i_n \leq k \}           \\
      &    &                                             \\
\fM '' & := & \{ L_{i_1}'' \cdots L_{i_n}''  \mid
n \geq 1, \ 1 \leq i_1, \ldots , i_n \leq k \} .
\end{array}   \right.
\end{equation}
\end{notation}

$\ $

The full Fock space model from Theorem \ref{thm:4.4} will use 
some special monomials ``$S_1^* M_1 \cdots$
$S_n^* M_n$'' formed with the isometries 
$L_1 ', \ldots , L_k ', L_1 '', \ldots , L_k ''$ and their 
adjoints, which are described in the next lemma.

\begin{lemma}    \label{lemma:4.2}
Given a positive integer $n$ and some fixed indices 
$i_1, \ldots , i_n \in \{ 1, \ldots , k \}$.

$1^o$ Let $\pi$ be a partition in $NC(n)$ and let 
$c: \pi \to \{ 1,2 \}$ be a colouring. For every 
$m \in \{ 1, \ldots , n \}$ let $V = \{ v(1), v(2), \ldots , v(p) \}$ 
(with $v(1)< v(2)< \cdots < v(p)$) denote the block of $\pi$ which 
contains $m$, and define
\begin{equation}    \label{eqn:4.21}
S_m := \left\{   \begin{array}{lc} 
{L'}_{i_m},  & \mbox{if $c(V) =1$}     \\
{L''}_{i_m}, & \mbox{if $c(V) =2$,}    
\end{array}  \right.
\end{equation} 
\begin{equation}    \label{eqn:4.22}
M_m = \left\{   \begin{array}{ll} 
{L'}_{i_{v(p)}}  \cdots {L'}_{i_{v(2)}}  {L'}_{i_{v(1)}},     &
  \mbox{if $m = \max (V) \bigl( = v(p) \bigr)$ and $c(V) =1$}     \\
{L''}_{i_{v(p)}}  \cdots {L''}_{i_{v(2)}}  {L''}_{i_{v(1)}},  &
  \mbox{if $m = \max (V)$ and $c(V) =2$}                          \\
1_{ B( \cF ) }   &  \mbox{if $m \neq \max (V)$.}
\end{array}  \right.
\end{equation} 
Then $S_1^* M_1 \cdots S_n^* M_n \Omega = \Omega$.

$2^o$ Suppose that $S_1, \ldots , S_n, M_1, \ldots , M_n \in B( \cF )$
are such that 

(i) $S_m \in \{ L_{i_m}' , L_{i_m}'' \} , \ \ 1 \leq m \leq n$;

(ii) $M_m \in \{ 1_{ B( \cF ) } \} \cup \fM ' \cup \fM '', \ \ 
1 \leq m \leq n$ (with $\fM ', \fM ''$ as in (\ref{eqn:4.12})); and

(iii) $S_1^* M_1 \cdots S_n^* M_n \Omega = \Omega$.

\noindent
Then there exist a partition $\pi \in NC(n)$ and a colouring 
$c: \pi \to \{ 1,2 \}$ such that $S_1, \ldots ,S_n$, 
$M_1, \ldots ,M_n$ are obtained from $\pi$ and $c$ via the recipe 
described in part $1^o$ of the lemma.
\end{lemma}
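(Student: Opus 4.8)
The plan is to read the operator $S_1^* M_1 \cdots S_n^* M_n$, applied to $\Omega$ from the right, as a sequence of ``push'' and ``pop'' moves on the full Fock space $\cF$. Writing $e^{(1)}_i := e_i'$, $e^{(2)}_i := e_i''$ and $L^{(1)}_i := L_i'$, $L^{(2)}_i := L_i''$ for convenience, each factor $M_m$ is either $1_{B(\cF)}$ or a monomial of creation operators all of a single type (i.e.\ lying in $\fM '$ or in $\fM ''$), so it pushes a block of basis vectors on top of the current tensor, while each $S_m^*$ is a single annihilation removing the top basis vector (or killing the vector when the top does not match its type and index). Part $1^o$ and part $2^o$ are then the two directions of the statement that the product returns $\Omega$ to $\Omega$ exactly for the configurations arising from a pair $(\pi , c)$.

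For part $1^o$ I would track the intermediate vectors $\psi_m := S_m^* M_m \cdots S_n^* M_n \, \Omega$ and prove, by downward induction on $m$ from $m=n+1$ (where $\psi_{n+1}=\Omega$) to $m=1$, an explicit closed form. Call a position $v$ \emph{pending at stage $m$} if $v < m \leq \max(V)$, where $V$ is the block of $\pi$ containing $v$; the invariant to maintain is that $\psi_m$ is the basis vector whose tensor factors are $e^{(c(V))}_{i_v}$, listed from top to bottom in decreasing order of the pending positions (and $\psi_m=\Omega$ when none are pending). The step splits as in the recipe. If $m$ is not the maximum of its block $V$, then $M_m = 1_{B(\cF)}$, one checks that $m$ is the largest pending position at stage $m+1$, so the top factor of $\psi_{m+1}$ is $e^{(c(V))}_{i_m}$, which $S_m^* = \bigl( L^{(c(V))}_{i_m} \bigr)^*$ removes, and the pending set loses exactly $m$. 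If $m = \max(V)$ with $V = \{ v(1) < \cdots < v(p) \}$, then $M_m$ pushes $e^{(c(V))}_{i_{v(p)}} \otimes \cdots \otimes e^{(c(V))}_{i_{v(1)}}$ on top, $S_m^*$ peels off the top factor $e^{(c(V))}_{i_{v(p)}} = e^{(c(V))}_{i_m}$, and the new pending positions become $v(1), \ldots , v(p-1)$; at $m=1$ nothing is pending, so $\psi_1 = \Omega$. The non-crossing hypothesis is used precisely to keep the ``decreasing order'' part of the invariant alive in the second case: that the freshly pushed factors $e^{(c(V))}_{i_{v(p-1)}}, \ldots , e^{(c(V))}_{i_{v(1)}}$ sit above all factors already present in $\psi_{m+1}$ amounts to ruling out a position $v$ pending at stage $m+1$ with $v > v(1)$, and such a $v$ would satisfy $\max(\text{block of }v) > m = \max(V)$ while lying in a gap $(v(j),v(j+1))$ of $V$, yielding the forbidden crossing $v(j) < v < v(j+1) < \max(\text{block of }v)$.

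For part $2^o$ I would run the same dynamics in reverse. Conditions (i)--(ii) make the action on $\Omega$ a balanced sequence of $n$ single pops and $n$ pushes (grouped into the monochromatic monomials $M_m$), and condition (iii) forces every pop to succeed and the stack to end empty. Matching each pop $S_m^*$ with the creation operator whose vector it removes yields, by the last-in--first-out discipline of the Fock space, a non-crossing matching; in particular $M_m \neq 1_{B(\cF)}$ precisely when $S_m^*$ removes a vector just pushed by $M_m$. Grouping $\{ 1, \ldots , n \}$ by putting $m, m'$ together when $S_m^*, S_{m'}^*$ remove vectors created inside one common monomial $M_{m''}$ defines a partition $\pi$; one checks that this $m''$ is the maximum of the block, that $\pi$ is non-crossing (again from the stack discipline), and that $M_m \neq 1_{B(\cF)}$ iff $m = \max$ of its block. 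Colouring each block by the common type of the creations in its monomial gives $c$, and the demand that each pop succeed then forces the type and index of every $S_m$, and of every factor of every $M_m$, to match exactly what the recipe of part $1^o$ assigns to $(\pi , c)$.

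The main obstacle in both directions is the same combinatorial point: controlling how the block-sized bunches of creations pushed by the $M_m$ interleave on the stack. In part $1^o$ it is the ordering claim in the ``$m = \max$'' case, and in part $2^o$ it is the verification that the stack matching collects the creations of one monomial $M_{m''}$ into a block with maximum $m''$ that nests correctly against the other blocks. Both rest on one statement about the order in which vectors pushed together are later popped under the last-in--first-out discipline, which I would isolate as a preliminary observation and then invoke in each direction.
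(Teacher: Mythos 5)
Your proof is correct, and it is essentially the argument the paper itself has in mind: the paper never writes out a proof of Lemma \ref{lemma:4.2}, but defers it (Remark \ref{rem:4.3}.2) to the standard full Fock space model argument of \cite{NS06} (Lecture 21, pp.\ 342--343 and Exercises 21.20--21.22), whose content is precisely the push/pop, last-in--first-out correspondence between products $S_1^* M_1 \cdots S_n^* M_n$ fixing $\Omega$ and non-crossing partitions that you develop -- your ``pending positions'' invariant in part $1^o$ and the stack matching in part $2^o$ are a self-contained write-up of that correspondence, and your crossing argument $v(j) < v < v(j+1) < \max(\mbox{block of } v)$ is exactly where non-crossingness enters. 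You also correctly capture the one adjustment the paper singles out relative to \cite{NS06}: restricting the $M_m$ to $\{ 1_{B(\cF)} \} \cup \fM' \cup \fM''$ (monochromatic monomials) forces the type assignment $b: \{1, \ldots, n\} \to \{1,2\}$ to be constant on the blocks of $\pi$, i.e.\ to come from a colouring $c$ of $\pi$, which in your part $2^o$ emerges from the requirement that every pop succeed.
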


\begin{remark}    \label{rem:4.3}

$1^o$ Here is a concrete example of how the recipe from Lemma 
\ref{lemma:4.2} works. Say for instance that $n=5$. Let 
$i_1, \ldots , i_5$ be some indices in $\{ 1, \ldots , k \}$,
and consider the monomial
\begin{equation}    \label{eqn:4.31}
( L_{i_1}' )^{*} \, ( L_{i_2}'' )^{*} \, ( L_{i_3}'' )^{*}
\, L_{i_3}'' \, L_{i_2}'' 
\, ( L_{i_4}' )^{*} \, ( L_{i_5}' )^{*} 
\, L_{i_5}'  \, L_{i_4}' \, L_{i_1}' .
\end{equation}  
Note that the product in (\ref{eqn:4.31}) reduces upon 
simplifications to $1_{ B( \cF ) }$, so in particular it fixes 
$\Omega$. Lemma \ref{lemma:4.2} views this product as being
$S_1^* M_1 \cdots S_5^* M_5$, where
\[
\left\{   \begin{array}{l}
S_1 = L_{i_1}', \, S_2 = L_{i_2}'', \, S_3 = L_{i_3}'', \,
S_4 = L_{i_4}', \, S_5 = L_{i_5}',  \ \ \mbox{ and }        \\
\mbox{  }                                                   \\
M_1 = M_2 = M_4 = 1_{ B( \cF ) }, \
M_3 = L_{i_3}'' L_{i_2}'', \  
M_5 = L_{i_5}' L_{i_4}' L_{i_1}'.
\end{array}  \right.
\]
Moreover, these $S_1, \ldots , S_5, M_1, \ldots , M_5$ correspond
in Lemma \ref{lemma:4.2} to the partition 
$\pi = \{ V_1, V_2 \} \in NC(5)$ with $V_1 = \{ 1,4,5 \}$,
$V_2 = \{ 2,3 \}$, and to the colouring $c: \pi \to \{ 1,2 \}$ 
defined by $c(V_1) =1$, $c(V_2) = 2$.

$2^o$ The proof of Lemma \ref{lemma:4.2} is very similar to the 
corresponding argument concerning the standard full Fock space 
model for the $R$-transform, as presented e.g. in Lecture 21 of
\cite{NS06}. Because of this, I will only explain (in the 
remaining part of this remark) how one makes the connection to 
the arguments from \cite{NS06}, and will leave the details as 
exercise to the reader. 

Besides $\fM '$ and $\fM ''$ from (\ref{eqn:4.12}), let us also
use the notation
\begin{equation}    \label{eqn:4.32}
\fM := \{ 1_{ B( \cF ) } \} \cup  \Bigl\{ S_1 \cdots S_{\ell} \mid 
\ell \geq 1, \ S_1, \ldots , S_{\ell} \in \{ L_1, \ldots , L_k', 
L_1'', \ldots , L_k'' \} \, \Bigr\} .
\end{equation}
Suppose that the following data is given: a positive integer $n$,
some indices $i_1, \ldots , i_n \in \{ 1, \ldots , k \}$, and a 
function $b: \{ 1, \ldots , n \} \to \{ 1,2 \}$. Let the isometries
$S_1 \in \{ L_{i_1}', L_{i_1}'' \} , \ldots ,$
$S_n \in \{ L_{i_n}', L_{i_n}'' \}$ be picked via the rule that 
\begin{equation}    \label{eqn:4.33}
S_m = \left\{  \begin{array}{ll}
L_{i_m}',   &  \mbox{if $b(m)=1$}     \\
L_{i_m}'',  &  \mbox{if $b(m)=2$,} 
\end{array}   \right. \ \ 1 \leq m \leq n,
\end{equation}
and consider the following problem: describe all possible ways of
choosing $( M_1, \ldots , M_n) \in \fM^n$ such that 
$S_1^{*}M_1 \cdots S_n^* M_n \Omega = \Omega$.
\footnote{ It is easy to see that that this condition is in fact 
equivalent to the requirement that the product 
$S_1^{*}M_1 \cdots S_n^* M_n$ simplifies to $1_{ B( \cF ) }$ 
after repeated use of the relations 
$( L_i' )^* L_i' = ( L_i'' )^* L_i'' = 1_{ B( \cF ) }$, 
$1 \leq i \leq k$. } 
The solution to this problem is that the $n$-tuples 
$(M_1, \ldots , M_n)$ with the required property are canonically 
parametrized by $NC(n)$. For the description of how to construct 
the $n$-tuple $(M_1, \ldots , M_n) \in \fM^n$ canonically associated 
to a partition $\pi \in NC(n)$, and for the explanation why this 
construction works, see the discussion on pp. 342-343 and the 
Exercises 21.20-21.22 on pp. 356-357 of \cite{NS06}. The statement
of Lemma \ref{lemma:4.2} is merely an adjustment of this procedure
(for how to construct $(M_1, \ldots , M_n)$ by starting from $\pi$),
where one has to take into account the following additional detail: 
$M_1, \ldots , M_n$ are now only allowed to run in the smaller set
$\{ 1_{ B( \cF ) } \} \cup \fM ' \cup \fM ''$ (instead of all of
$\fM$). This imposes a compatibility condition between $\pi$ and 
the function $b : \{ 1, \ldots , n \} \to \{ 1,2 \}$ that was used
in (\ref{eqn:4.33}) -- specifically, that $b$ must be constant along
the blocks of $\pi$ (and hence must correspond to a colouring $c$
of $\pi$).
\end{remark}

\begin{theorem}   \label{thm:4.4}
Let $\mu , \nu$ be distributions in $\dalg (k)$ such that the 
$R$-transforms $R_{\mu}$ and $R_{\nu}$ are polynomials:
\begin{equation}   \label{eqn:4.41}
\left\{  \begin{array}{lcl}
R_{\mu} (z_1, \ldots , z_k) & = &
\sum_{n=1}^N \sum_{i_1, \ldots , i_n = 1}^k \
\alpha_{ (i_1, \ldots , i_n) } z_{i_1} \cdots z_{i_n}   \\ 
                            &   &                       \\
R_{\nu} (z_1, \ldots , z_k) & = &
\sum_{n=1}^N \sum_{i_1, \ldots , i_n = 1}^k \
\beta_{ (i_1, \ldots , i_n) } z_{i_1} \cdots z_{i_n}   
\end{array}  \right.
\end{equation}
(where $N$ is a common upper bound for the degrees of 
$R_{\mu}$ and $R_{\nu}$). In the framework of Notation \ref{def:4.1},
consider the operator $T \in B( \cF )$ defined by
\begin{equation}   \label{eqn:4.42}
T = 1_{B( \cF )} + 
\sum_{n=1}^N \ \sum_{i_1, \ldots , i_n = 1}^k \
\alpha_{ (i_1, \ldots , i_n) } L_{i_n} ' \cdots L_{i_1}'   
+ \sum_{n=1}^N \ \sum_{i_1, \ldots , i_n = 1}^k \
\beta_{ (i_1, \ldots , i_n) } L_{i_n} '' \cdots L_{i_1} ''  ,
\end{equation}
and make the notations 
\begin{equation}   \label{eqn:4.43}
A_i := ( L_i ' )^* T, \ \ B_i := ( L_i '' )^* T, \ \ 
1 \leq i \leq k,
\end{equation}
followed by 
\begin{equation}   \label{eqn:4.44}
C_i := A_i + (1- \pvac ) \, B_i \, (1- \pvac ), \ \ 1 \leq i \leq k.
\end{equation}
Then the joint distribution of $C_1, \ldots , C_k$ with respect to
the vacuum-state on $B( \cF )$ is equal to $\mu \boxright \nu$.
\end{theorem}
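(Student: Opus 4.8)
The plan is to compute the moments $\langle C_{i_1} \cdots C_{i_n} \Omega, \Omega \rangle$ against the vacuum-state directly and match them with the moment formula (\ref{eqn:3.101}) from Remark \ref{rem:3.10}.2, whose index set consists exactly of the pairs $(\pi, c)$ with $\pi \in NC(n)$ and $c : \pi \to \{1,2\}$ such that $c(V) = 1$ for every outer block $V$ of $\pi$. First I would substitute $C_{i_m} = A_{i_m} + (1 - \pvac) B_{i_m} (1 - \pvac)$ into the product and expand multilinearly, obtaining a sum indexed by a position-colouring $b : \{1, \ldots, n\} \to \{1,2\}$, where $b(m)=1$ selects $A_{i_m}$ and $b(m)=2$ selects the sandwiched $B_{i_m}$. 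Writing $A_i = (L_i')^* T$, $B_i = (L_i'')^* T$ and expanding the operator $T$ of (\ref{eqn:4.42}) into its monomial summands (the term $1_{B(\cF)}$, the $\alpha$-monomials lying in $\fM'$, the $\beta$-monomials lying in $\fM''$), every resulting term is a monomial $S_1^* M_1 \cdots S_n^* M_n$, with $(1-\pvac)$ inserted around the colour-$2$ positions, where $S_m \in \{L_{i_m}', L_{i_m}''\}$ and $M_m \in \{1_{B(\cF)}\} \cup \fM' \cup \fM''$.

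Ignoring for the moment the projections, such a monomial contributes $\langle S_1^* M_1 \cdots S_n^* M_n \Omega, \Omega \rangle$, which is $1$ when the monomial fixes $\Omega$ and $0$ otherwise. By Lemma \ref{lemma:4.2}.2 the monomials fixing $\Omega$ are precisely those arising, via the recipe of Lemma \ref{lemma:4.2}.1, from a pair $(\pi, c)$ with $\pi \in NC(n)$ and $c : \pi \to \{1,2\}$, and this correspondence is a bijection. Under it, the position-colouring $b$ is forced to be constant along the blocks of $\pi$ and to agree with $c$, while the nontrivial $M_m$ occur exactly at the block-maxima and carry the coefficient read off from $T$: a block $V$ with $c(V)=1$ forces the $\alpha$-monomial indexed by $(i_1,\ldots,i_n)|V$, contributing $\alpha_{(i_1,\ldots,i_n)|V} = \cf_{(i_1,\ldots,i_n)|V}(R_\mu)$, and a block with $c(V)=2$ contributes $\beta_{(i_1,\ldots,i_n)|V} = \cf_{(i_1,\ldots,i_n)|V}(R_\nu)$. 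Taking the product over blocks, the term associated to $(\pi,c)$ contributes exactly $\cf_{(i_1,\ldots,i_n);\pi;c}(R_\mu,R_\nu)$, by Definition \ref{def:2.11}.3.

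It remains to decide which contributions survive once the projections around the colour-$2$ factors are reinstated, and this is the crux. The key geometric fact is that, reading the word $S_1^* M_1 \cdots S_n^* M_n$ from the right on $\Omega$, the intermediate vector $\zeta_m := S_m^* M_m \cdots S_n^* M_n \Omega$ equals $\Omega$ if and only if $m$ is the minimum of an outer block of $\pi$; this follows from the non-crossing structure underlying Lemma \ref{lemma:4.2}, since the tail word fixes $\Omega$ exactly when $\{m,\ldots,n\}$ is a union of complete blocks, which happens precisely when the block containing $m$ is outer with minimum $m$. Granting this, I would argue both directions. If some outer block $V$ has $c(V)=2$, then at $m=\min V$ the genuine vector $\zeta_m$ equals $\Omega$, and the left projection $(1-\pvac)$ annihilates it, so the term cannot survive. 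Conversely, if every outer block has colour $1$, then every colour-$2$ position $m$ lies in an inner block, and two facts about the non-crossing structure guarantee neither surrounding projection has any effect: first, $m$ is not the minimum of an outer block, so $\zeta_m \neq \Omega$; second, $m<n$ (the block containing $n$ is necessarily outer, hence colour $1$) and $m+1$ is not the minimum of an outer block (non-crossing forbids an inner position from sitting immediately to the left of the start of an outer block), so $\zeta_{m+1} \neq \Omega$.

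Hence the surviving terms are exactly the pairs $(\pi,c)$ with $c(V)=1$ on all outer blocks, each still contributing $\cf_{(i_1,\ldots,i_n);\pi;c}(R_\mu,R_\nu)$; summing reproduces the right-hand side of (\ref{eqn:3.101}), giving $\langle C_{i_1} \cdots C_{i_n} \Omega, \Omega \rangle = (\mu \boxright \nu)(X_{i_1} \cdots X_{i_n})$ for all $n$ and all indices, which is the assertion. The main obstacle I anticipate is making the vacuum-return characterization, together with the non-crossing adjacency facts it rests on, fully rigorous; once that is in hand, the rest is bookkeeping on top of Lemma \ref{lemma:4.2} and formula (\ref{eqn:3.101}).
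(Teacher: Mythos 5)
Your proposal is correct and follows essentially the same route as the paper's own proof: expand $C_{i_1}\cdots C_{i_n}$ into monomials $Q_1 S_1^* (\gamma_1 M_1) Q_1 \cdots Q_n S_n^* (\gamma_n M_n) Q_n$, use Lemma \ref{lemma:4.2} to parametrize the vacuum-fixing words by pairs $(\pi,c)$ with coefficient $\cf_{(i_1,\ldots,i_n);\pi;c}(R_\mu,R_\nu)$, determine which terms survive the projections, and match the result with Equation (\ref{eqn:3.101}). Your ``key geometric fact'' (intermediate vectors return to $\Omega$ exactly at outer-block boundaries) is precisely the content of the paper's Equation (\ref{eqn:4.412}), whose verification the paper leaves as an exercise with the same informal justification, so your write-up in fact supplies a correct sketch of that omitted step.
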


\begin{remark}   \label{rem:4.45}
By comparing the framework of Theorem \ref{thm:4.4} with the 
``standard'' full Fock space model for the $R$-transform (as 
presented for instance in Theorem 21.4 of \cite{NS06}), one 
sees that the operators $A_1, \ldots , A_k$,
$B_1, \ldots , B_k$ defined by Equation (\ref{eqn:4.43}) give 
the standard full Fock space model for the free product 
$\mu \star \nu$. In particular one has that 
$\{ A_1, \ldots , A_k \}$ is free from $\{ B_1, \ldots ,B_k \}$
with respect to the vacuum-state on $B( \cF )$, and the joint 
distributions of the $k$-tuples $A_1, \ldots , A_k$ and 
$B_1, \ldots , B_k$ are equal to $\mu$ and to $\nu$, respectively.

An other way of phrasing this same remark is that the full Fock 
space model for $\mu \boxright \nu$ is obtained by merely performing
an extra step (specifically, by considering the operators 
$C_1, \ldots , C_k$ defined by Equation (\ref{eqn:4.44})) in the 
standard full Fock space model for $\mu \star \nu$.
\end{remark}

$\ $

{\em Proof of Theorem \ref{thm:4.4}.} For the whole proof we fix 
a positive integer $n$ and some indices 
$1 \leq i_1, \ldots , i_n \leq k$, for which we will show that 
\begin{equation}   \label{eqn:4.45}
\langle \, C_{i_1} \cdots C_{i_n} \Omega \, , \, \Omega \, \rangle
= ( \mu \boxright \nu ) (X_{i_1} \cdots X_{i_n}).
\end{equation}

$ $From (\ref{eqn:4.42})--(\ref{eqn:4.44}) it follows that every 
$C_i$ $(1 \leq i \leq k)$ can be written as a sum of products of
the form
\begin{equation}   \label{eqn:4.46}
Q \cdot S^* \cdot ( \gamma M ) \cdot Q,
\end{equation}
where $Q \in \{ 1_{B( \cF )}, 1_{B( \cF )} - \pvac \}$,
$S \in \{ L_i ', L_i '' \}$, and $\gamma M$ is a term from the sum
defining $T$ (where $\gamma \in \bC$ and 
$M \in \{ 1_{B( \cF )} \} \cup \fM ' \cup \fM ''$). Of course, there 
are some restrictions on what combinations of $Q, S$ and $\gamma M$ 
can go together in (\ref{eqn:4.46}): if $Q = 1_{B( \cF )}$ then 
$S= L_i '$ and $\gamma M$ is either $1_{B( \cF )}$ or of the form 
$\alpha_{ (j_1, \ldots , j_m) } L'_{j_m} \cdots L'_{j_1}$, while
$Q = 1_{B( \cF )} - \pvac$ goes with $S= L_i ''$ and with $\gamma M$ 
being either $1_{B( \cF )}$ or of the form 
$\beta_{ (j_1, \ldots , j_m) } L''_{j_m} \cdots L''_{j_1}$. A 
precise count thus gives that every $C_i$ splits into a sum of 
$2 \cdot (1+k+ \cdots +k^N)$ terms of the form (\ref{eqn:4.46}).
When one writes each of $C_{i_1}, \ldots , C_{i_n}$ as a sum in 
this way and expands the product, the inner product on the left-hand 
side of (\ref{eqn:4.45}) is thus broken into a sum of  
$\bigl( 2 \cdot (1+k+ \cdots +k^N) \bigr)^n$ terms of the form 
\begin{equation}   \label{eqn:4.47}
\langle \, (Q_1 {S_1}^* ( \gamma_1 M_1 ) Q_1) \cdots
(Q_n {S_n}^* ( \gamma_n M_n ) Q_n) \Omega \, , \, \Omega \, \rangle .
\end{equation}

Now let us fix one of the possible choices of operators 
$Q_i, S_i, M_i$ $(1 \leq i \leq n)$ in (\ref{eqn:4.47}), and let us
look at the $4n$ vectors 
\begin{equation}    \label{eqn:4.475}
\xi_1 = Q_n \Omega, \ \xi_2 = M_n Q_n \Omega, \ldots ,
\xi_{4n} = Q_1 S_1^* M_1 Q_1  \cdots Q_n S_n^* M_n Q_n \Omega
\end{equation}
obtained by succesively applying the operators 
$Q_n, M_n, {S_n}^*, Q_n, \ldots , Q_1, M_1, {S_1}^*, Q_1$ to 
$\Omega$. It is clear that each of these $4n$ vectors either is 0 
or belongs to the orthonormal basis (\ref{eqn:4.11}) for $\cF$;
and consequently, the inner product (\ref{eqn:4.47}) is equal to
\begin{equation}    \label{eqn:4.48}
\left\{  \begin{array}{ll}
\gamma_1 \cdots \gamma_n,  & \mbox{if }
Q_1 S_1^* M_1 Q_1 \cdots Q_n S_n^* M_n Q_n \Omega = \Omega    \\
0,                         & \mbox{otherwise.}
\end{array}   \right.
\end{equation}
Let us moreover observe that if 
$Q_1 S_1^* M_1 Q_1 \cdots Q_n S_n^* M_n Q_n \Omega = \Omega$, 
then we also have 
$S_1^* M_1 \cdots S_n^* M_n \Omega = \Omega$. This is because 
when one succesively applies $Q_n, M_n, \ldots , {S_1}^*, Q_1$ 
to $\Omega$, the projections $Q_1, \ldots , Q_n$ used on the way
either leave invariant the vector presented to them, or send it 
to 0 (but can't actually do the latter, as 
$Q_1 S_1^* \cdots M_n Q_n \Omega = \Omega \neq 0$). 

By invoking Lemma \ref{lemma:4.2} we thus see that if an inner 
product as in (\ref{eqn:4.47}) is to be different from 0, then 
there have to exist a partition $\pi \in NC(n)$ and a colouring 
$c: \pi \to \{ 1,2 \}$ such that $S_1, M_1, \ldots , S_n, M_n$
are defined in terms of $\pi$ and $c$ in the way described in 
Lemma \ref{lemma:4.2}. It is immediate that in this case 
the numbers $\gamma_1, \ldots , \gamma_n$ from (\ref{eqn:4.48}) 
are identified as $\alpha_{ (j_1, \ldots, j_m) }$'s and  
$\beta_{ (j_1, \ldots, j_m) }$'s (coefficients of the 
$R$-transforms of $\mu$ and of $\nu$) in such a way that
their product becomes
\begin{equation}     \label{eqn:4.410}
\gamma_1 \cdots \gamma_n = 
\cf_{ (i_1, \ldots , i_n); \pi ; c } ( R_{\mu} , R_{\nu} ).
\end{equation}

Conversely, let $\pi$ be a partition in $NC(n)$, let $c$ be a 
colouring of $\pi$, and consider the operators 
$S_1, M_1, \ldots , S_n, M_n$
defined in terms of $\pi$ and $c$ in the way described in 
Lemma \ref{lemma:4.2}. Observe that there exists a unique way of
choosing projections $Q_1, \ldots , Q_n \in 
\{ 1_{B( \cF )}, 1_{B( \cF )} - P_{\Omega} \}$ so that 
the $S_j, M_j, Q_j$ for $1 \leq j \leq n$ give together an 
inner product as in (\ref{eqn:4.47}). To be precise, for every 
$1 \leq j \leq n$ the projection $Q_j$ is chosen as follows: 
consider the block $V$ of $\pi$ which contains the number $j$, 
and put
\begin{equation}    \label{eqn:4.411}
Q_j = \left\{   \begin{array}{ll}
1_{B( \cF )},              & \mbox{if $c(V) = 1$}    \\
1_{B( \cF )} - P_{\Omega}, & \mbox{if $c(V) = 2$.} 
\end{array}   \right.
\end{equation}
Note that whereas Lemma \ref{lemma:4.2} ensures that
$S_1^* M_1 \cdots S_n^* M_n \Omega = \Omega$,
it may still happen that (with $Q_j$s defined by (\ref{eqn:4.411})) 
the vector $Q_1 S_1^* M_1 Q_1 \cdots Q_n S_n^* M_n Q_n \Omega$
is equal to 0. It as easy (though perhaps notationally tedious) to 
check that 
\begin{equation}    \label{eqn:4.412}
Q_1 S_1^* M_1 Q_1 \cdots Q_n S_n^* M_n Q_n \Omega =
\left\{   \begin{array}{ll}
\Omega,  & \mbox{if $c(V)=1$ for every outer block of $\pi$}   \\
0,       & \mbox{otherwise.}
\end{array}   \right.
\end{equation}
The verification of (\ref{eqn:4.412}) is left as exercise to the 
reader. Informally speaking, what makes (\ref{eqn:4.412}) hold is 
that in a sequence of $4n$ vectors obtained as in (\ref{eqn:4.475}) 
one reaches $\Omega$ 
precisely at the positions where the outer blocks of $\pi$ begin
and end -- hence these are the positions where a $Q_j$ has a chance 
to make a difference, and cause the vector
$Q_1 S_1^* M_1 Q_1 \cdots Q_n S_n^* M_n Q_n \Omega$ to vanish.

Summarizing the above discussion, one sees that 
\begin{equation}   \label{eqn:4.413}
\langle \, C_{i_1} \cdots C_{i_n} \Omega \, , \, \Omega \, \rangle
= \sum_{ ( \pi , c ) }
\cf_{ (i_1, \ldots , i_n); \pi ; c } ( R_{\mu} , R_{\nu} ) ,
\end{equation}   
where the index set for the 
sum on the right-hand side of (\ref{eqn:4.413}) is 
\[
\left\{  ( \pi , c ) \
\begin{array}{cl}
\vline & \pi \in NC(n), \ c: \pi \to \{ 1,2 \}, \mbox{ such that} \\
\vline & \mbox{$c(V)=1$ for every outer block $V$ of $\pi$}
\end{array} \right\}  .
\]
But the sum on the right-hand side of (\ref{eqn:4.413}) is 
precisely the expression observed for 
$( \mu \boxright \nu) ( X_{i_1} \cdots X_{i_n} )$ in 
Remark \ref{rem:3.10}.2, and this concludes the proof.
\hfill $\blacksquare$

$\ $

Let us now go towards the proof of Theorem \ref{thm:1.4}. It will 
be convenient to adopt a slightly different point of view for the 
projection onto the vacuum-vector, which does not make explicit
use of vectors and operators, and is described as follows.

\begin{definition}   \label{def:4.5}
Let $( \cA , \varphi )$ be a noncommutative probability space. 
A {\em vacuum-projection} for $\varphi$ is an element $P \in \cA$
such that $P=P^2 \neq 0$ and such that 
\begin{equation}   \label{eqn:4.51}
PAP = \varphi (A) P, \ \ \forall \, A \in \cA .
\end{equation}
\end{definition}

\begin{remark}  \label{rem:4.6} 

$1^o$ The main example of vacuum-projection is of course provided
by the situation when $\cA = B( \cH )$, the functional 
$\varphi$ is the vector-state associated to a unit vector 
$\xi_0 \in \cH$, and $P$ is the orthogonal projection onto the 
1-dimensional subspace $\bC \xi_0$ of $\cH$.

$2^o$ Let $( \cA , \varphi )$ and $P$ be as in Definition 
\ref{def:4.5}. Observe that $\varphi (P) =1$ (as seen
by making $A=P$ in Equation (\ref{eqn:4.51})). Let us 
also observe that 
\begin{equation}   \label{eqn:4.61}
\varphi (PB) = \varphi (B) = \varphi (BP), \ \ 
\forall \, B \in \cA .
\end{equation}
In order to verify the first of these two equalities 
we set $A = (1_{\cA} -P)B$ and find that 
\[
\varphi (A) P = PAP = P (1_{\cA} -P)B P = 0,
\]
which implies that $\varphi (A) = 0$ and hence that 
$\varphi (B) = \varphi (PB)$. The verification of the second 
equality in (\ref{eqn:4.61}) is analogous.
\end{remark}

\begin{lemma}   \label{lemma:4.7}
Let $( \cA , \varphi )$ be a noncommutative probability space 
and let $P \in \cA$ be a vacuum-projection for $\varphi$. Then
\begin{equation}    \label{eqn:4.71}
\varphi (T_1 P T_2 P \cdots P T_n ) = \prod_{i=1}^n \varphi (T_i),
\ \ \forall \, n \geq 2 \mbox{ and } T_1, \ldots , T_n \in \cA.
\end{equation}
\end{lemma}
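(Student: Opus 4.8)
The plan is to prove (\ref{eqn:4.71}) by induction on $n$, relying on the two structural facts already in hand: the collapsing identity $PAP = \varphi(A) P$ from (\ref{eqn:4.51}), and the padding identities $\varphi(PB) = \varphi(B) = \varphi(BP)$ from (\ref{eqn:4.61}), which let me insert or delete a factor $P$ at either end of an argument of $\varphi$ at no cost. The strategy is a straightforward telescoping: peel off the factor $T_1$ together with one separating $P$ at a time, each peeling step lowering the number of $T_i$'s by one.

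For the inductive step I would start from $\varphi(T_1 P T_2 P \cdots P T_n)$ and first pad on the left: by (\ref{eqn:4.61}) this equals $\varphi(P T_1 P T_2 P \cdots P T_n)$. The leading segment $P T_1 P$ can now be collapsed via (\ref{eqn:4.51}) to $\varphi(T_1) P$, so the whole expression becomes $\varphi(T_1)\, \varphi(P T_2 P T_3 \cdots P T_n)$. Stripping the leading $P$ by a further application of (\ref{eqn:4.61}) gives $\varphi(T_1)\, \varphi(T_2 P T_3 \cdots P T_n)$, whose second factor is a product of exactly the same shape but involving only the $n-1$ elements $T_2, \ldots, T_n$. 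Invoking the induction hypothesis then yields $\varphi(T_2 P \cdots P T_n) = \prod_{i=2}^n \varphi(T_i)$, and multiplying by $\varphi(T_1)$ produces the desired $\prod_{i=1}^n \varphi(T_i)$. One may take the base case to be $n=1$ (vacuously $\varphi(T_1) = \varphi(T_1)$, with no $P$ present), so that this single computation drives the whole induction; alternatively the case $n=2$ can be checked directly, where the same three moves give $\varphi(T_1 P T_2) = \varphi(P T_1 P T_2) = \varphi(T_1)\,\varphi(P T_2) = \varphi(T_1)\varphi(T_2)$.

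I do not expect a genuine obstacle, since the argument is purely formal. The only point demanding a little care is the order of the moves: one must pad with $P$ \emph{before} attempting to collapse, because the hypothesis $PAP = \varphi(A) P$ requires a $P$ flanking $T_1$ on both sides, whereas the original product supplies a $P$ only to the right of $T_1$. Supplying the missing left-hand $P$ for free is exactly the role played by (\ref{eqn:4.61}), and this is the step that makes the induction close.
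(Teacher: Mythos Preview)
Your proof is correct and follows essentially the same inductive strategy as the paper's own proof: both use (\ref{eqn:4.61}) to pad with an extra $P$, then (\ref{eqn:4.51}) to peel off one $T_i$ and reduce to a shorter product. The only cosmetic difference is that the paper pads on the right and collapses $P T_2 P$ (grouping $T_2' := T_2 P \cdots P T_{n+1}$), whereas you pad on the left and collapse $P T_1 P$; the logic is identical.
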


\begin{proof} By induction on $n$. For $n=2$ we write
\begin{align*}
\varphi (T_1 P T_2 ) 
& = \varphi (T_1 P T_2 P)  &  \mbox{ (by (\ref{eqn:4.61})) }   \\
& = \varphi (T_1 \cdot \varphi (T_2) P)  
                           &  \mbox{ (by (\ref{eqn:4.51})) }   \\ 
& = \varphi (T_2) \ \varphi (T_1 P)  
                           &                                   \\
& = \varphi (T_2) \ \varphi (T_1) 
                           & \mbox{ (by (\ref{eqn:4.61})). }  
\end{align*}
The induction step ``$n \Rightarrow n+1$'' is immediately obtained
by writing $T_1 P T_2 P \cdots P T_n P T_{n+1}$ as $T_1 P T_2'$
with $T_2' := T_2 P \cdots P T_n P T_{n+1}$ and by repeating the 
above calculation, followed by the induction hypothesis.
\end{proof}

\begin{lemma}   \label{lemma:4.8}
Let $( \cA , \varphi )$ be a noncommutative probability space and 
and let $T_1, \ldots , T_{\ell}, P$ be in $\cA$, where 
$P$ is a vacuum-projection for $\varphi$. Suppose moreover that 
for every $N \geq 1$ we are given a noncommutative probability 
space $( \cA_N, \varphi_N )$ and elements 
$T_1^{(N)}, \ldots , T_{\ell}^{(N)}, P^{(N)} \in \cA_N$,
such that $P^{(N)}$ is a vacuum-projection for $\varphi_N$. If 
the $\ell$-tuples $T_1^{(N)}, \ldots , T_{\ell}^{(N)}$
converge in distribution for $N \to \infty$ to 
$T_1, \ldots , T_{\ell}$, then the $( \ell + 1)$-tuples 
$T_1^{(N)}, \ldots , T_{\ell}^{(N)}, P^{(N)}$
converge in distribution for $N \to \infty$ to the 
$( \ell +1 )$-tuple $T_1, \ldots , T_{\ell}, P$.
\end{lemma}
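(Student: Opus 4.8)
The plan is to prove convergence in distribution of the $(\ell+1)$-tuples directly on the level of moments, by showing that every joint moment of $T_1^{(N)}, \ldots , T_{\ell}^{(N)}, P^{(N)}$ reduces --- via a manipulation that is the same for every $N$ and for the limit space --- to a product of joint moments of the $\ell$-tuple $T_1^{(N)}, \ldots , T_{\ell}^{(N)}$. Concretely, a generic joint moment is $\varphi_N ( W^{(N)} )$, where $W^{(N)}$ is a word in the letters $T_1^{(N)}, \ldots , T_{\ell}^{(N)}, P^{(N)}$ and $W$ is the corresponding word in $T_1, \ldots , T_{\ell}, P$; I must show $\varphi_N ( W^{(N)} ) \to \varphi ( W )$.

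First I would normalize the word $W$ using the algebraic relations that hold identically in $\cA$ and in each $\cA_N$. Since $P = P^2$ (and likewise $P^{(N)} = ( P^{(N)} )^2$), any block of consecutive $P$'s collapses to a single $P$, so I may assume $W$ has no two adjacent occurrences of $P$. Thus $W$ is an alternating expression $U_0 P U_1 P \cdots P U_m$ in which each $U_j$ is a (possibly empty) word in the $T_i$ alone, and the interior blocks $U_1, \ldots , U_{m-1}$ are nonempty. Next, if $U_0$ or $U_m$ is empty --- i.e. if $W$ begins or ends with $P$ --- I would strip that boundary $P$ using the identities $\varphi (PB) = \varphi (B) = \varphi (BP)$ recorded in Equation (\ref{eqn:4.61}); the same identities hold for $\varphi_N$ because $P^{(N)}$ is a vacuum-projection.

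After this normalization either $W$ contains no $P$, in which case $\varphi ( W )$ is already a moment of the $\ell$-tuple, or $W = U_0' P U_1' P \cdots P U_r'$ with all $U_j'$ nonempty words in the $T_i$; in the latter case Lemma \ref{lemma:4.7}, applied to the elements $U_0', \ldots , U_r'$, gives $\varphi ( W ) = \prod_{j=0}^r \varphi ( U_j' )$. (The degenerate case $W = P$ simply yields $\varphi (P) = 1$, the empty product.) The crucial observation is that the number of factors $r+1$ and the individual words $U_j'$ depend only on the combinatorics of $W$, not on $N$. Consequently the identical reduction carried out inside $( \cA_N , \varphi_N )$ yields $\varphi_N ( W^{(N)} ) = \prod_{j=0}^r \varphi_N ( ( U_j' )^{(N)} )$, a finite product with a fixed number of factors.

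Finally I would pass to the limit: by the hypothesis that the $\ell$-tuples converge in distribution, each factor $\varphi_N ( ( U_j' )^{(N)} )$ tends to $\varphi ( U_j' )$, and since there are finitely many factors (independently of $N$) the product converges to $\prod_{j} \varphi ( U_j' ) = \varphi ( W )$. This establishes $\varphi_N ( W^{(N)} ) \to \varphi ( W )$ for every word $W$, which is exactly the asserted convergence in distribution. I expect the only delicate point to be the careful bookkeeping of the degenerate cases (pure powers of $P$, empty boundary blocks) and the verification that every relation used --- idempotency, Equation (\ref{eqn:4.61}), and Lemma \ref{lemma:4.7} --- is available uniformly in $( \cA , \varphi )$ and in all the $( \cA_N , \varphi_N )$; the analytic content is merely the termwise convergence of a product with a bounded (indeed fixed) number of factors.
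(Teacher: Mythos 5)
Your proof is correct and takes essentially the same route as the paper: both arguments reduce a joint moment involving the vacuum-projection to a product of $P$-free moments via Lemma \ref{lemma:4.7}, and then pass to the limit factor by factor, the number of factors being fixed independently of $N$. The only difference is bookkeeping --- the paper avoids your explicit normalization steps (collapsing adjacent $P$'s by idempotency, stripping boundary $P$'s via Equation (\ref{eqn:4.61})) by writing every word in the alternating form $f_1 P f_2 P \cdots P f_n$ with the $f_i$ allowed to be arbitrary polynomials, including the constant $1$, so that Lemma \ref{lemma:4.7} handles the degenerate cases automatically.
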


\begin{proof} It clearly suffices to verify that, for any 
$n \geq 2$ and any choice of non-commutative polynomials
$f_1, \ldots , f_n \in \bC \langle X_1, \ldots , X_{\ell} \rangle$,
the sequence 
\[
\varphi_N \Bigl( \, 
f_1 ( T_1^{(N)}, \ldots , T_{\ell}^{(N)} ) P^{(N)}
f_2 ( T_1^{(N)}, \ldots , T_{\ell}^{(N)} ) P^{(N)} \cdots
P^{(N)} f_n ( T_1^{(N)}, \ldots , T_{\ell}^{(N)} ) \, \Bigr), 
\ \ N \geq 1
\]
converges for $N \to \infty$ to
$\varphi \bigl( \, f_1 ( T_1, \ldots , T_{\ell} ) P
f_2 ( T_1, \ldots , T_{\ell} ) P \cdots P 
f_n ( T_1, \ldots , T_{\ell} ) \, \bigr).$ 
But in view of Lemma \ref{lemma:4.7} the latter convergence 
amounts to 
\[
\lim_{N \to \infty} \prod_{i=1}^n 
\varphi_N ( \, f_i ( T_1^{(N)}, \ldots , T_{\ell}^{(N)} ) \, ) = 
\prod_{i=1}^n \varphi ( \, f_i ( T_1, \ldots , T_{\ell} ) \, ),
\]
which is an immediate consequence of the given hypothesis.
\end{proof}

\begin{theorem}   \label{thm:4.9}
Let two distributions $\mu , \nu \in \dalg (k)$ be given. Suppose that 
$( \cA , \varphi )$ is a noncommutative probability space and that
$A_1, \ldots , A_k, B_1, \ldots , B_k \in \cA$ are such that 
$\{ A_1, \ldots , A_k \}$ is free from $\{ B_1, \ldots , B_k \}$,
such that the joint distribution of $A_1, \ldots , A_k$ is equal 
to $\mu$, and such that the joint distribution of $B_1, \ldots ,B_k$ 
is equal to $\nu$. Suppose in addition that $P \in \cA$ is a 
vacuum-projection for $\varphi$, and consider the elements 
\begin{equation}    \label{eqn:4.91}
C_i := A_i + ( 1_{\cA} -P) B_i (1_{\cA} -P), \ \ 
1 \leq i \leq k.
\end{equation}
Then the joint distribution of $C_1, \ldots , C_k$ with respect
to $\varphi$ is equal to $\mu \boxright \nu$.
\end{theorem}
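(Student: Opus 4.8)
The plan is to derive the general statement from the special full Fock space model of Theorem \ref{thm:4.4} by an approximation argument, with Lemma \ref{lemma:4.8} providing the essential input that the vacuum-projections survive the passage to the limit.

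First I would reduce to polynomial $R$-transforms. For arbitrary $\mu, \nu \in \dalg (k)$, let $\mu_N, \nu_N \in \dalg (k)$ be the distributions whose $R$-transforms are the truncations of $R_{\mu}$ and $R_{\nu}$ to degree $\leq N$; these are polynomials, and since their coefficients converge to those of $R_{\mu}, R_{\nu}$ one has $\mu_N \to \mu$ and $\nu_N \to \nu$ in distribution (cf. Remark \ref{rem:3.11}). For each $N$, Theorem \ref{thm:4.4} supplies a concrete realization on the full Fock space $\cF$: operators $A_1^{(N)}, \ldots , A_k^{(N)}, B_1^{(N)}, \ldots , B_k^{(N)}$ with $\{ A_i^{(N)} \}$ free from $\{ B_i^{(N)} \}$ and having marginal distributions $\mu_N$ and $\nu_N$ (Remark \ref{rem:4.45}), together with the vacuum-projection $\pvac$, for which the operators $C_i^{(N)}$ built as in (\ref{eqn:4.44}) have joint distribution $\mu_N \boxright \nu_N$.

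Next I would compare this realization with the given data. The decisive point is that any joint moment of two free families is a universal polynomial in the marginal moments; as $\mu_N \to \mu$ and $\nu_N \to \nu$, the freeness hypotheses on both sides force the $2k$-tuples $A_1^{(N)}, \ldots , B_k^{(N)}$ to converge in distribution to $A_1, \ldots , B_k$, even though they live in different ambient spaces. Since $\pvac$ and $P$ are vacuum-projections for the respective states, Lemma \ref{lemma:4.8} then upgrades this to convergence in distribution of the enlarged tuples $A_1^{(N)}, \ldots , B_k^{(N)}, \pvac$ to $A_1, \ldots , B_k, P$. Because each $C_i$ is a fixed noncommutative polynomial in these generators, it follows that $C_1^{(N)}, \ldots , C_k^{(N)}$ converge in distribution to $C_1, \ldots , C_k$.

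Finally I would pass to the limit: each moment $\varphi (C_{i_1} \cdots C_{i_n})$ is the limit of the corresponding moment of $\mu_N \boxright \nu_N$, while by the continuity of $\boxright$ under convergence in distribution (Remark \ref{rem:3.11}) the latter converges to the corresponding moment of $\mu \boxright \nu$; hence the joint distribution of $C_1, \ldots , C_k$ equals $\mu \boxright \nu$. I expect the main obstacle to be the comparison step, namely making precise that ``free with converging marginals'' yields joint convergence of the tuples $A_1^{(N)}, \ldots , B_k^{(N)}$ to $A_1, \ldots , B_k$ across distinct probability spaces, so that Lemma \ref{lemma:4.8} is genuinely applicable; everything else is either a direct citation of Theorem \ref{thm:4.4} or a routine limit.
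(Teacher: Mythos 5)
Your proposal is correct and follows essentially the same route as the paper's own proof: truncate the $R$-transforms to get $\mu_N, \nu_N$ with polynomial $R$-transforms, realize $\mu_N \boxright \nu_N$ via Theorem \ref{thm:4.4} on the full Fock space, use the fact that freeness makes joint moments universal polynomials in the marginal moments to get convergence of the $(2k)$-tuples, apply Lemma \ref{lemma:4.8} to adjoin the vacuum-projections, and conclude via the continuity of $\boxright$ from Remark \ref{rem:3.11}. The ``main obstacle'' you identify is exactly the step the paper also relies on, and it is standard, so there is no gap.
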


\begin{proof} For $n \geq 1$ and $1 \leq i_1, \ldots , i_n \leq k$
we will denote the coefficients of $z_{i_1} \cdots z_{i_n}$ in the 
series $R_{\mu}$ and $R_{\nu}$ by $\alpha_{ (i_1, \ldots , i_n) }$
and $\beta_{ (i_1, \ldots , i_n) }$, respectively.

Let $N$ be a positive integer. Consider the distributions 
$\mu_N, \nu_N \in \dalg (k)$ which are uniquely determined by the 
requirement that their $R$-transforms are 
\begin{equation}    \label{eqn:4.92}
\left\{  \begin{array}{lcl}
R_{\mu} (z_1, \ldots , z_k) & = &
\sum_{n=1}^N \sum_{i_1, \ldots , i_n = 1}^k \
\alpha_{ (i_1, \ldots , i_n) } z_{i_1} \cdots z_{i_n}   \\ 
                            &   &                       \\
R_{\nu} (z_1, \ldots , z_k) & = &
\sum_{n=1}^N \sum_{i_1, \ldots , i_n = 1}^k \
\beta_{ (i_1, \ldots , i_n) } z_{i_1} \cdots z_{i_n}  .
\end{array}  \right.
\end{equation}  
Let us consider the standard full Fock space model, exactly as 
described in Theorem 21.4 of \cite{NS06}, for the free product 
$\mu_N * \nu_N \in \dalg (2k)$. This gives us a 
noncommutative probability space $( \cA_N, \varphi_N )$ and 
elements $A_1^{(N)}, \ldots , A_k^{(N)}$, 
$B_1^{(N)},\ldots , B_k^{(N)} \in \cA_N$  such that 
$\{ A_1^{(N)}, \ldots , A_k^{(N)} \}$ is free from 
$\{ B_1^{(N)}, \ldots , B_k^{(N)} \}$, such that the joint 
distribution of $A_1^{(N)}, \ldots , A_k^{(N)}$ is equal 
to $\mu_N$, and such that the joint distribution of 
$B_1^{(N)}, \ldots ,B_k^{(N)}$ is equal to $\nu_N$. Since the full 
Fock space model is constructed by using a true vacuum-state on a
Hilbert space, we also get at the same time a vacuum-projection 
$P^{(N)} \in \cA_N$.

We now make $N \to \infty$. From how $\mu_N$ and $\nu_N$ were 
constructed it is immediate that we have limits in distribution 
$\mu_N \to \mu$ and $\nu_N \to \nu$. This implies that we also 
have the limit in distribution $\mu_N * \nu_N \to \mu * \nu$,
or in terms of operators that the $(2k)$-tuples 
$A_1^{(N)}, \ldots , A_k^{(N)}, B_1^{(N)}, \ldots , B_k^{(N)}$
converge in distribution for $N \to \infty$ to the $(2k)$-tuple
$A_1, \ldots , A_k, B_1, \ldots , B_k$. By invoking Lemma 
\ref{lemma:4.8} we upgrade this to the fact that 
the $( 2k + 1)$-tuples $A_1^{(N)}, \ldots , A_k^{(N)}$, 
$B_1^{(N)}, \ldots , B_{k}^{(N)}, P^{(N)}$
converge in distribution for $N \to \infty$ to the 
$( 2k +1 )$-tuple $A_1, \ldots , A_k, B_1, \ldots , B_k, P$. 
The latter convergence implies in turn that the 
$k$-tuple $C_1 \ldots , C_k$ defined in (\ref{eqn:4.91}) is the 
limit in distribution for the $k$-tuples 
$C_1^{(N)}, \ldots , C_k^{(N)}$, where for $1 \leq i \leq k$
and $N \geq 1$ we put
\begin{equation}    \label{eqn:4.93}
C_i^{(N)} := A_i^{(N)} + ( 1_{\cA_N} - P^{(N)} ) B_i^{(N)}
( 1_{\cA_N} - P^{(N)} ) \in \cA_N.
\end{equation}

But for every $N \geq 1$, the operators 
$C_1^{(N)}, \ldots , C_k^{(N)}$ provide (as observed at the end 
of Remark \ref{rem:4.45}) the full Fock space model for 
the subordination distribution $\mu_N \boxright \nu_N$. Hence 
the conclusion of the preceding paragraph can be read as follows:
the joint distribution of $C_1, \ldots , C_k$ is the 
$N \to \infty$ limit of the distributions $\mu_N \boxright \nu_N$. 
Since it was noticed in Remark \ref{rem:3.11} that 
$( \, \mu_N \boxright \nu_N \, )_{N=1}^{\infty}$ converges in
distribution to $\mu \boxright \nu$, the conclusion of the theorem 
follows.
\end{proof}

\begin{remark}   \label{rem:4.10}  
Suppose now that $\mu , \nu \in \cD_c (k)$, i.e. they can 
appear as joint distributions for $k$-tuples of selfadjoint 
elements in some $C^*$-probability spaces. By considering the 
GNS representations of these $C^*$-probability spaces, one 
finds Hilbert spaces $\cH  , \cK$, unit vectors 
$\xi_o \in \cH$, $\zeta_o \in \cK$, and $k$-tuples of selfadjoint 
operators $A_1, \ldots , A_k \in B( \cH )$, 
$B_1, \ldots , B_k \in B( \cK )$ such that $\mu$ is the joint
distribution of $A_1, \ldots , A_k$ with respect to the vector-state
defined by $\xi_o$ on $B( \cH )$, while $\nu$ is the joint 
distribution of $B_1, \ldots , B_k$ with respect to the vector-state
defined by $\zeta_o$ on $B( \cK )$. Let us denote
\[
\cH^{o} := \cH \ominus \bC \xi_o, \ \
\cK^{o} := \cK \ominus \bC \zeta_o, 
\]
and let us consider the ``free product'' Hilbert space
\[
\cM := \bC \Omega \oplus 
\Bigl( \, \cH^{o} \oplus \cK^{o} \, \Bigr)
\oplus \Bigl( \, ( \cH^o \otimes \cK^o ) \oplus
( \cK^o \otimes \cH^o ) \, \Bigr) 
\]
\begin{equation}    \label{eqn:4.101}
{  }  \hspace{2cm}
\oplus \Bigl( \, ( \cH^o \otimes \cK^o \otimes \cH^o )
\oplus ( \cK^o \otimes \cH^o \otimes \cK^o ) \, \Bigr) \oplus \cdots
\end{equation}
(direct sum of all possible alternating tensor products of copies 
of $\cH^o$ and $\cK^o$). Then $A_1, \ldots , A_k, B_1, \ldots , B_k$
extend naturally to selfadjoint operators
$\widetilde{A}_1, \ldots , \widetilde{A}_k$,
$\widetilde{B}_1, \ldots , \widetilde{B}_k \in B( \cM )$ such that
$\{ \widetilde{A}_1, \ldots , \widetilde{A}_k \}$ is free from
$\{ \widetilde{B}_1, \ldots , \widetilde{B}_k \}$ with respect to
the vacuum-state defined by $\Omega$ on $B( \cM )$ and such that
(with respect to the same state) the joint distributions of 
$\widetilde{A}_1, \ldots , \widetilde{A}_k$ and of
$\widetilde{B}_1, \ldots , \widetilde{B}_k$ are equal to $\mu$
and $\nu$, respectively (see e.g. \cite{VDN92}, Section 1.5).

Theorem \ref{thm:4.9} clearly applies in the situation described 
in the preceding paragraph, and tells us that if 
$P_{\Omega} \in B( \cM )$ is the orthogonal projection onto 
$\bC \Omega$ and if we put
\begin{equation}    \label{eqn:4.102}
\widetilde{C}_i = \widetilde{A}_i + 
(1- P_{\Omega}) \, \widetilde{B}_i \, (1- P_{\Omega}), \ \ 
1 \leq i \leq k,
\end{equation}
then the joint distribution of 
$\widetilde{C}_1, \ldots , \widetilde{C}_k$ is equal to 
$\mu \boxright \nu$. Since the $\widetilde{C}_i$ are 
selfadjoint, this provides us with a proof that (as stated in 
Corollary \ref{cor:1.6} of the introduction) the subordination 
distribution $\mu \boxright \nu$ does indeed belong to $\cD_c (k)$.
\end{remark}

\begin{remark}   \label{rem:4.11}
In the framework and notations of the preceding remark, 
consider the subspace $\cL$ of $\cM$ defined by:
\begin{equation}    \label{eqn:4.103}
\cL := \bC \Omega \oplus \cH^{o} \oplus 
\bigl( \, \cK^o \otimes \cH^o \, \bigr) \oplus 
\bigl( \, \cH^o \otimes \cK^o \otimes \cH^o \, \bigr) \oplus \cdots
\end{equation}
(direct sum of all alternating tensor products of copies 
of $\cH^o$ and $\cK^o$ which end in $\cH^o$). In the terminology 
of \cite{L07}, this is the {\em $s$-free product space} of 
the Hilbert spaces $\cH$ and $\cK$, considered with respect to the 
special unit vectors $\xi_o \in \cH$ and $\zeta_o \in \cK$.

Observe that $\cL$ is invariant for the operators
$\widetilde{C}_1, \ldots , \widetilde{C}_k$ from (\ref{eqn:4.102}); 
this happens because $\cL$ is in fact invariant both for
$\widetilde{A}_i$ and for
$(1-P_{\Omega}) \,  \widetilde{B}_i \, (1-P_{\Omega} )$,
$1 \leq i \leq k$. It follows that the restrictions of  
$\widetilde{C}_1, \ldots , \widetilde{C}_k$ to $\cL$ also provide
us with an operator model for $\mu \boxright \nu$, with respect to
the vector-state defined by $\Omega$ on $B( \cL )$. By analyzing 
this operator model a bit further, one can moreover relate to the 
concept of ``$s$-freeness'' from \cite{L07}, in the way outlined 
in the next paragraph.

For every $1 \leq i \leq k$ let 
$\widehat{A}_i$ and $\widehat{B}_i$ denote the restrictions to $\cL$
of the operators $\widetilde{A}_i$ and respectively 
$(1-P_{\Omega}) \,  \widetilde{B}_i \, (1-P_{\Omega} )$. Let us 
consider the subalgebras $\cA , \cB$ of $B( \cL )$ which are 
generated by 
$\{ 1_{B( \cL )}, \widehat{A}_1, \ldots , \widehat{A}_k \}$ 
and respectively by $\{ 1_{ B( \cL ) } - P_{\Omega}, 
\widehat{B}_1, \ldots , \widehat{B}_k \}$. (Note that $\cB$ is not 
a unital subalgebra of $B( \cL )$, but it has its own unit 
$1_{\cB} = 1_{ B( \cL ) } - P_{\Omega}$, where $P_{\Omega}$ is 
viewed here as a 1-dimensional projection in $B( \cL )$.) Finally,
let us select (and fix) an arbitrary unit vector 
$\theta_o \in \cH^o \subseteq \cL$, and let $\varphi$ and $\psi$ 
be the vector-states defined on $B( \cL )$ by $\Omega$ and by 
$\theta_o$, respectively. It is not hard to verify
that the algebras $\cA$ and $\cB$ are $s$-free in 
$( \, B( \cL ), \varphi , \psi \, )$, in the sense of Definition 
7.1 from \cite{L07}. It is moreover immediate that the 
joint distribution of $\widehat{A}_1, \ldots , \widehat{A}_k$ in 
$( \cA \, , \varphi | \cA )$ is equal to $\mu$, while the joint 
distribution of $\widehat{B}_1, \ldots , \widehat{B}_k$ in 
$( \cB \, , \psi | \cB )$ is equal to $\nu$. Thus 
$\mu \boxright \nu$ has been realized as the joint distribution 
of $\widehat{A}_1 + \widehat{B}_1, \ldots , \widehat{A}_k + 
\widehat{B}_k$, where the $k$-tuples 
$\widehat{A}_1, \ldots , \widehat{A}_k$ and
$\widehat{B}_1, \ldots , \widehat{B}_k$ are $s$-free and have 
distributions $\mu$ and $\nu$, respectively. 

The verification of the $s$-freeness of $\cA$ and $\cB$ in 
the preceding paragraph is left as exercise. A reader who is 
interested in $s$-freeness may also find it as an amusing (not
hard) exercise to start from this latter description of 
$\mu \boxright \nu$ and see, conversely, how the statement 
of Theorem \ref{thm:1.4} can be obtained from there.
\end{remark}

\vspace{6pt}

We conclude this section by observing that (as a supplement to 
the fact that $\mu \boxright \nu \in \cD_c (k)$ whenever 
$\mu , \nu \in \cD_c (k)$), there exist natural situations when 
$\mu \boxright \nu$ is sure to be infinitely divisible.

\begin{corollary}    \label{cor:4.11}
Let $\mu, \nu$ be two distributions in $\cD_c (k)$.

$1^o$ If $\mu$ is $\boxplus$-infinitely divisible, then so is
$\mu \boxright \nu$.

$2^o$ Suppose that ``$\mu$ is a $\boxplus$-summand of $\nu$ in 
$\cD_c (k)$'', in the sense that there exists $\nu ' \in \cD_c (k)$
such that $\nu = \mu \boxplus \nu '$. Then $\mu \boxright \nu$ is 
infinitely divisible.
\end{corollary}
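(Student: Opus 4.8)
The plan is to deduce both parts from the structural identities for $\boxright$ already available, combined with the standard characterization of $\boxplus$-infinite divisibility in $\cD_c(k)$: a distribution $\rho \in \cD_c(k)$ is $\boxplus$-infinitely divisible if and only if its free convolution power $\rho^{\boxplus t}$ still belongs to $\cD_c(k)$ for every $t > 0$.

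For part $1^o$, I would invoke the $\boxplus$-convolution-power identity (\ref{eqn:1.22}), which gives $(\mu \boxright \nu)^{\boxplus t} = (\mu^{\boxplus t}) \boxright \nu$ for all $t > 0$. Since $\mu$ is assumed $\boxplus$-infinitely divisible, each power $\mu^{\boxplus t}$ lies in $\cD_c(k)$; then Corollary \ref{cor:1.6}, applied to the pair $\mu^{\boxplus t}, \nu$, shows that $(\mu^{\boxplus t}) \boxright \nu \in \cD_c(k)$ as well. Hence $(\mu \boxright \nu)^{\boxplus t} \in \cD_c(k)$ for every $t > 0$, which is exactly the assertion that $\mu \boxright \nu$ is $\boxplus$-infinitely divisible.

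For part $2^o$, I would write the hypothesis as $\nu = \mu \boxplus \nu'$ with $\nu' \in \cD_c(k)$ and apply the relation (\ref{eqn:3.82}) from Remark \ref{rem:3.8}.1, namely $\bB(\mu \boxright \sigma) = \mu \boxright (\mu \boxplus \sigma)$, taking $\sigma = \nu'$. This produces $\mu \boxright \nu = \mu \boxright (\mu \boxplus \nu') = \bB(\mu \boxright \nu')$. Because $\mu, \nu' \in \cD_c(k)$, Corollary \ref{cor:1.6} gives $\mu \boxright \nu' \in \cD_c(k)$, so $\mu \boxright \nu$ lies in $\bB(\cD_c(k))$. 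Recalling (as in Remark \ref{rem:1.2}.2, following Theorem 1 of \cite{BN08}) that $\bB(\cD_c(k))$ is precisely the set of $\boxplus$-infinitely divisible distributions inside $\cD_c(k)$, the conclusion follows.

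Neither part needs any genuine computation once the right prior results are identified, so the only point requiring care is bookkeeping of what is available at this stage of the paper: for part $2^o$ I would use the already-proved identity (\ref{eqn:3.82}) rather than its generalization Proposition \ref{prop:1.10}, whose proof is deferred to Section 5. The mild conceptual input, common to both parts, is the characterization of $\boxplus$-infinite divisibility -- directly via stability of convolution powers in $\cD_c(k)$ for part $1^o$, and via membership in the image $\bB(\cD_c(k))$ for part $2^o$.
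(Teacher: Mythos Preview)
Your proof is correct and follows essentially the same approach as the paper: part $1^o$ uses the identity $(\mu \boxright \nu)^{\boxplus t} = \mu^{\boxplus t} \boxright \nu$ together with Corollary~\ref{cor:1.6}, and part $2^o$ uses the identity $\mu \boxright (\mu \boxplus \nu') = \bB(\mu \boxright \nu')$ from Remark~\ref{rem:3.8}.1 together with the characterization of $\bB(\cD_c(k))$ as the infinitely divisible distributions. Your remark about using (\ref{eqn:3.82}) directly rather than the later Proposition~\ref{prop:1.10} is exactly the care the paper takes as well.
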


\begin{proof} $1^o$ The hypothesis that $\mu$ is 
$\boxplus$-infinitely divisible is equivalent to the fact 
that, for every $t>0$, the convolution power $\mu^{\boxplus t}$
(which can always be defined in $\dalg (k)$) still belongs to 
$\cD_c (k)$. But then, by invoking Remark \ref{rem:1.2}.1 and 
Corollary \ref{cor:1.6} one finds that 
\[
( \mu \boxright \nu )^{\boxplus t} =
( \mu^{\boxplus t} \boxright \nu ) \in \cD_c (k),
\ \ \forall \, t>0,
\]
which means that $\mu \boxright \nu$ is infinitely divisible as well.

$2^o$ One has $\mu \boxright \nu$ = 
$\mu \boxright ( \mu \boxplus \nu ') = \bB ( \mu \boxright \nu ' )$
(where at the second equality sign we used Remark \ref{rem:3.8}.1). 
Since $\mu \boxright \nu ' \in \cD_c (k)$ (by Corollary 
\ref{cor:1.6}), and since $\bB$ carries $\cD_c (k)$ onto the set 
of $\boxplus$-infinitely divisible distributions in $\cD_c (k)$, 
the conclusion follows.
\end{proof}

$\ $

$\ $

\begin{center}
{\bf\large 5. Relations with the transformations $\bB_t$}
\end{center}
\setcounter{section}{5}
\setcounter{equation}{0}
\setcounter{theorem}{0}

\begin{proposition}   \label{prop:5.1}
Let $\mu, \nu$ be distributions in $\dalg (k)$. For every 
$t > 0$ one has that
\begin{equation}   \label{eqn:5.11}  
\bB_t ( \mu \boxright \nu ) = \mu \boxright 
\bigl( \mu^{\boxplus t} \boxplus \nu \bigr).
\end{equation}
\end{proposition}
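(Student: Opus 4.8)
The plan is to prove the identity by showing that the two distributions $\bB_t(\mu\boxright\nu)$ and $\mu\boxright(\mu^{\boxplus t}\boxplus\nu)$ have the same $\eta$-series; as noted in Remark \ref{rem:3.11}, a distribution in $\dalg(k)$ is determined by its Boolean cumulants, so this suffices. Thus I fix $n\geq 1$ and indices $1\leq i_1,\ldots,i_n\leq k$ and compute the coefficient $\cf_{(i_1,\ldots,i_n)}$ of both $\eta_{\bB_t(\mu\boxright\nu)}$ and $\eta_{\mu\boxright(\mu^{\boxplus t}\boxplus\nu)}$, aiming to match them term by term over $NC(n)$.

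For the right-hand side I would start from Proposition \ref{prop:3.7}. Since $R_{\mu^{\boxplus t}\boxplus\nu}=tR_\mu+R_\nu$, equation (\ref{eqn:3.73}) gives $\cf_{(i_1,\ldots,i_n)}(\eta_{\mu\boxright(\mu^{\boxplus t}\boxplus\nu)})=\sum_{\pi\leqleq 1_n}\cf_{(i_1,\ldots,i_n);\pi;\opi}(R_\mu,(1+t)R_\mu+R_\nu)$. Expanding the factor $(1+t)R_\mu+R_\nu$ over the inner blocks of each $\pi$ (recall that $\pi\leqleq 1_n$ has a unique outer block $V_0$, which $\opi$ assigns to $R_\mu$), every inner block contributes either $(1+t)$ times an $R_\mu$-coefficient or an $R_\nu$-coefficient. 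Collecting terms rewrites this as a sum over colourings $c:\pi\to\{1,2\}$ with $c(V_0)=1$, weighted by $(1+t)^{|c^{-1}(1)|-1}$, of $\cf_{(i_1,\ldots,i_n);\pi;c}(R_\mu,R_\nu)$ (the exponent being the number of inner blocks coloured $1$).

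For the left-hand side I first record a general formula for the Boolean cumulants of $\bB_t(\sigma)$, valid for any $\sigma\in\dalg(k)$. Writing $\bB_t(\sigma)=(\sigma^{\boxplus(1+t)})^{\uplus 1/(1+t)}$ and using that $R$ linearises $\boxplus$ while $\eta$ linearises $\uplus$, one has $\eta_{\bB_t(\sigma)}=\tfrac{1}{1+t}\eta_{\sigma^{\boxplus(1+t)}}$ with $R_{\sigma^{\boxplus(1+t)}}=(1+t)R_\sigma$; substituting into the Boolean-from-free formula (\ref{eqn:2.143}) and using that $\cf_{\,\cdot\,;\rho}$ is a product over the $|\rho|$ blocks yields $\cf_{(i_1,\ldots,i_n)}(\eta_{\bB_t(\sigma)})=\sum_{\rho\leqleq 1_n}(1+t)^{|\rho|-1}\cf_{(i_1,\ldots,i_n);\rho}(R_\sigma)$. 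Taking $\sigma=\mu\boxright\nu$, expanding each $\cf_{(i_1,\ldots,i_n);\rho}(R_{\mu\boxright\nu})$ by Lemma \ref{lemma:3.6}, and exchanging the order of summation, I would bring the left-hand side to the form $\sum_{\pi\leqleq 1_n}\sum_{\rho:\,\pi\leqleq\rho\leqleq 1_n}(1+t)^{|\rho|-1}\cf_{(i_1,\ldots,i_n);\pi;\oo_{\pi,\rho}}(R_\mu,R_\nu)$.

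The crux is to match the two weighted sums, and this is where Proposition \ref{prop:2.8} does the work — the same re-indexing already used in the proof of Proposition \ref{prop:3.7}. For fixed $\pi\leqleq 1_n$ the map $\rho\mapsto\{V\in\pi:V\text{ is }\rho\text{-special}\}$ is a bijection onto the block-sets containing $V_0$, and under it $\oo_{\pi,\rho}$ runs exactly through the colourings $c$ with $c(V_0)=1$. The point I would need to verify — and which I expect to be the main obstacle — is the weight identity $|\rho|=\#\{\rho\text{-special blocks of }\pi\}=|c^{-1}(1)|$: each block $W$ of $\rho$ forces $\min W,\max W$ into a common block of $\pi$, which is then $\rho$-special, and this sets up a bijection between the blocks of $\rho$ and the $\rho$-special blocks of $\pi$. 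Granting this, the factor $(1+t)^{|\rho|-1}$ becomes $(1+t)^{|c^{-1}(1)|-1}$, so the left-hand side matches the right-hand side term by term, completing the proof. The special case $t=1$ recovers equation (\ref{eqn:3.82}) of Remark \ref{rem:3.8}.
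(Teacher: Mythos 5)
Your proof is correct, but it follows a genuinely different route from the paper's. The paper proves (\ref{eqn:5.11}) in two soft steps: first an induction over integer values $t=m$, using the semigroup property $\bB_{m+1}=\bB\circ\bB_m$ together with the case $t=1$ already recorded as Equation (\ref{eqn:3.82}); then, for arbitrary $t>0$, it observes that each coefficient of $R_{\bB_t(\mu\boxright\nu)}$ (by Remark 4.4 of \cite{BN09}) and of $R_{\mu\boxright(\mu^{\boxplus t}\boxplus\nu)}$ (by Proposition \ref{prop:3.3} and additivity of $R$) is a polynomial function of $t$, so agreement on $\bN$ forces agreement for all $t>0$ --- the paper explicitly declines to do the ``detailed combinatorial analysis'' of the two sums. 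You do precisely that analysis: you match the Boolean cumulants term by term for every $t$ at once, deriving the weighted formula $\cf_{(i_1,\ldots,i_n)}(\eta_{\bB_t(\sigma)})=\sum_{\rho\leqleq 1_n}(1+t)^{|\rho|-1}\cf_{(i_1,\ldots,i_n);\rho}(R_\sigma)$ directly from the definition of $\bB_t$ and Equation (\ref{eqn:2.143}) (so you do not even need the citation to \cite{BN09}), expanding via Lemma \ref{lemma:3.6}, and re-indexing through the bijection of Proposition \ref{prop:2.8} exactly as in the proof of Proposition \ref{prop:3.7}. The one ingredient not already in the paper is the weight identity $|\rho|=\#\{\text{$\rho$-special blocks of }\pi\}$ whenever $\pi\leqleq\rho$, and your sketch of it is sound: for each block $W$ of $\rho$ the block $V$ of $\pi$ containing $\min W$ and $\max W$ satisfies $V\subseteq W$ (since $\pi\leq\rho$ and blocks of $\rho$ are disjoint), hence $\min V=\min W$ and $\max V=\max W$, so $V$ is $\rho$-special; the assignment $W\mapsto V$ is injective because $W$ is recovered as the unique block of $\rho$ containing $V$, and surjective by the definition of $\rho$-special. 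What each approach buys: the paper's argument is shorter and avoids combinatorics entirely, at the cost of the induction-plus-polynomiality detour and an external reference; yours is self-contained, valid for all $t$ in one pass, and makes the bijective, weight-preserving structure behind the identity explicit --- in effect it upgrades the paper's Proposition \ref{prop:3.7} to its natural $t$-weighted version.
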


\begin{proof}
We first prove by induction that 
\begin{equation}   \label{eqn:5.12}  
\bB_m ( \mu \boxright \nu ) = \mu \boxright
\bigl( \mu^{\boxplus m} \boxplus \nu \bigr), \ \ \forall \, 
m \in \bN .
\end{equation}
The base case $m=1$ of the induction is provided by formula 
(\ref{eqn:3.82}) in Remark \ref{rem:3.8}.1. The induction step 
``$m \Rightarrow m+1$'' also follows immediately by using the 
same formula:
\begin{align*}  
\bB_{m+1} ( \mu \boxright \nu )
& = \bB \bigl( \bB_m ( \mu \boxright \nu ) \bigr)  &
      \mbox{ (since $\bB_{m+1} = \bB \circ \bB_m$) }         \\
& = \bB \Bigl( \mu \boxright 
\bigl( \mu^{\boxplus m} \boxplus \nu \bigr) \Bigr)  &
      \mbox{ (by the induction hypothesis) }                 \\
& = \mu \boxright \Bigl( \mu \boxplus 
\bigl( \mu^{\boxplus m} \boxplus \nu \bigr) \Bigr)  &
      \mbox{ (by Equation (\ref{eqn:3.82})) }                \\ 
& = \mu \boxright \Bigl( 
\mu^{\boxplus (m+1)} \boxplus \nu \Bigr) .  &   { }
\end{align*}

Now we move to proving that (\ref{eqn:5.11}) holds for arbitrary 
$t> 0$. It suffices to fix $n \in \bN$ and 
$1 \leq i_1, \ldots , i_n \leq k$ and to verify that 
\begin{equation}   \label{eqn:5.13}  
\cf_{ (i_1, \ldots , i_n) } \Bigl( \, 
R_{ \bB_t ( \mu \boxright \nu ) } \, \Bigr) = 
\cf_{ (i_1, \ldots , i_n) } \Bigl( \, 
R_{ \mu \boxright ( \mu^{\boxplus t} \boxplus \nu ) }
\, \Bigr) , \ \ \forall \, t>0.
\end{equation}
For both sides of (\ref{eqn:5.13}) one has explicit writings as 
sums indexed by non-crossing partitions. Indeed, Remark 4.4 from 
\cite{BN09} tells us that the left-hand side of (\ref{eqn:5.13}) is 
equal to 
\begin{equation}   \label{eqn:5.14}  
\sum_{ \begin{array}{c}
{\scriptstyle  \rho \in NC(n),}    \\
{\scriptstyle  \rho \leqleq 1_n}
\end{array} } \ t^{ | \rho | -1 } \cf_{ (i_1, \ldots , i_n); \rho }
\bigl( R_{\mu \boxright \nu} \bigr) ,
\end{equation}
while the right-hand side of (\ref{eqn:5.13}) can be written (by 
Proposition \ref{prop:3.3} and by taking into account the additivity 
of the $R$-transform) in the form
\begin{equation}   \label{eqn:5.15}  
\sum_{ \begin{array}{c}
{\scriptstyle  \pi \in NC(n),}    \\
{\scriptstyle  \pi \leqleq 1_n}
\end{array} } \ \cf_{ (i_1, \ldots , i_n); \pi ; \oo_{\pi} }
( R_{\mu}, t R_{\mu} + R_{\nu} ).
\end{equation}
Rather than pursuing a detailed combinatorial analysis of the sums
in (\ref{eqn:5.14}) and (\ref{eqn:5.15}) we can simply exploit
the obvious fact that (for our fixed $n$ and $i_1, \ldots , i_n$)
both these sums are polynomial functions of $t$. Two polynomial
functions that agree (as shown by (\ref{eqn:5.12})) for all 
$m \in \bN$ must in fact agree for all $t>0$, and (\ref{eqn:5.13})
follows.
\end{proof}

\begin{remark}   \label{rem:5.2}
As an application of Proposition \ref{prop:5.1}, we will 
next see how the formula ``$\mu \boxright \mu = \bB ( \mu )$'' 
from Remark \ref{rem:1.2}.2 extends to a formula for 
$( \mu^{\boxplus s} ) \boxright ( \mu^{\boxplus t} )$, where
$s,t \geq 0$. In order to cover the cases when $s=0$ or $t=0$,
we will denote by $\ncdirac \in \dalg (k)$ the ``non-commutative 
Dirac distribution at 0'' which has all moments equal to 0. Then,
clearly, $R_{\ncdirac} = \eta_{\ncdirac} = 0 \in 
\bC_0 \langle \langle z_1, \ldots , z_k \rangle \rangle$;
as a consequence one has 
$\delta^{\boxplus t} = \delta^{\uplus t} = \delta$,
hence $\bB_t ( \delta ) = \delta$ for every $t>0$. Moreover, it is 
clear that $\delta$ is the 
neutral element for both the operations $\boxplus$ and $\uplus$ 
on $\dalg (k)$, which justifies the convention that 
\begin{equation}   \label{eqn:5.21}
\mu^{\boxplus 0} = \mu^{\uplus 0} = \delta , \ \ \forall
\, \mu \in \dalg (k).
\end{equation}
Concerning subordination distributions it is easy to check,
directly from Definition \ref{def:1.1}, that 
\begin{equation}   \label{eqn:5.22}
\mu \boxright \delta = \mu \ \mbox{ and } \
\delta \boxright \mu  = \delta, \ \ \forall
\, \mu \in \dalg (k).
\end{equation}
\end{remark}

\vspace{6pt}

\begin{proposition}  \label{prop:5.3}
Let $\mu$ be a distribution in $\dalg (k)$. Then for every 
$s,t \geq 0$ one has
\begin{equation}   \label{eqn:5.31}
\bigl( \mu^{\boxplus s} \bigr) \boxright
\bigl( \mu^{\boxplus t} \bigr) = 
\bigl( \, \bB_t ( \mu ) \, \bigr)^{\boxplus s}.
\end{equation}
\end{proposition}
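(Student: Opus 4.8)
The plan is to reduce the general identity (\ref{eqn:5.31}) to the single special case $\nu = \delta$ of the two structural results already available: the $\boxplus$-linearity of $\boxright$ in its left variable (Equation (\ref{eqn:1.22})) and the commutation formula of Proposition \ref{prop:5.1}. The central intermediate step I would isolate is the claim that for every $t \geq 0$,
\[
\mu \boxright \bigl( \mu^{\boxplus t} \bigr) = \bB_t ( \mu ).
\]
To prove this for $t > 0$, I would set $\nu = \delta$ in the formula $\bB_t ( \mu \boxright \nu ) = \mu \boxright ( \mu^{\boxplus t} \boxplus \nu )$ of Proposition \ref{prop:5.1}. Its left-hand side then collapses to $\bB_t ( \mu \boxright \delta ) = \bB_t ( \mu )$ by the first identity in (\ref{eqn:5.22}), while its right-hand side becomes $\mu \boxright ( \mu^{\boxplus t} \boxplus \delta ) = \mu \boxright ( \mu^{\boxplus t} )$ because $\delta$ is the neutral element for $\boxplus$. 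For $t = 0$ the same identity is immediate from the conventions recorded in Remark \ref{rem:5.2}: one has $\mu^{\boxplus 0} = \delta$, so the left-hand side is $\mu \boxright \delta = \mu$, which equals $\bB_0 ( \mu ) = \mu$.

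Granting the intermediate identity, the passage to arbitrary $s$ is then a one-line computation. For any $s > 0$ and $t \geq 0$ I would invoke Equation (\ref{eqn:1.22}) with $\nu := \mu^{\boxplus t}$ to write
\[
\bigl( \mu^{\boxplus s} \bigr) \boxright \bigl( \mu^{\boxplus t} \bigr)
= \Bigl( \mu \boxright \bigl( \mu^{\boxplus t} \bigr) \Bigr)^{\boxplus s}
= \bigl( \bB_t ( \mu ) \bigr)^{\boxplus s},
\]
where the second equality substitutes the intermediate identity (legitimate for every $t \geq 0$). This is exactly (\ref{eqn:5.31}).

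The displayed computation is valid for all $s > 0$ and all $t \geq 0$, since (\ref{eqn:1.22}) requires only $s > 0$ while the intermediate identity was established for every $t \geq 0$. I therefore expect the only remaining point — and the only delicate one, being a matter of bookkeeping rather than of any combinatorial difficulty — to be the boundary case $s = 0$. There the left side of (\ref{eqn:5.31}) is $\delta \boxright ( \mu^{\boxplus t} ) = \delta$ by the second identity in (\ref{eqn:5.22}), and the right side is $\bigl( \bB_t ( \mu ) \bigr)^{\boxplus 0} = \delta$ by the convention (\ref{eqn:5.21}), so the two agree for every $t \geq 0$. This exhausts all of $s, t \geq 0$; the entire substantive content is carried by Proposition \ref{prop:5.1} together with the linearity (\ref{eqn:1.22}), and there is no genuine obstacle beyond the careful handling of the boundary conventions for $\delta$.
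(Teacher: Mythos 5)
Your proof is correct and follows essentially the same route as the paper: the paper likewise establishes $\mu \boxright \bigl( \mu^{\boxplus t} \bigr) = \bB_t ( \mu )$ by combining Proposition \ref{prop:5.1} with the identities (\ref{eqn:5.22}) for $\delta$, and then applies the $\boxplus$-linearity from Remark \ref{rem:1.2}.1 (Equation (\ref{eqn:1.22})) to pass to $\mu^{\boxplus s}$. Your explicit handling of the boundary cases $s=0$ and $t=0$ (which Proposition \ref{prop:5.1} and Equation (\ref{eqn:1.22}) do not formally cover, being stated for $t>0$) is a small but welcome addition that the paper leaves implicit.
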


\begin{proof} First observe that 
\begin{align*}
\mu \boxright \bigl( \mu^{\boxplus t} \bigr) 
& = \mu \boxright \Bigl( \, \bigl( \mu^{\boxplus t} \bigr) 
\boxplus \delta \, \Bigr) &  \mbox{ ($\ncdirac$ neutral 
                  element for $\boxplus$) }                  \\
& = \bB_t \bigl( \mu \boxright  \delta \, \bigr) \ 
       &    \mbox{ (by Proposition \ref{prop:5.1}) }          \\
& = \bB_t ( \mu )   &   \mbox{ (by (\ref{eqn:5.22})). } 
\end{align*}
Then recall from Remark \ref{rem:1.2}.1 that 
$\bigl( \mu^{\boxplus s} \bigr) \boxright
\bigl( \mu^{\boxplus t} \bigr) = 
\Bigl( \mu  \boxright \bigl( \mu^{\boxplus t} \bigr) 
\Bigr)^{\boxplus s}$, and (\ref{eqn:5.31}) follows.
\end{proof}

\begin{remark}    \label{rem:5.35}
The remaining part of this section discusses the relation to 
free Brownian motion stated in Theorem \ref{thm:1.8} of the 
introduction. Same as in Remark \ref{rem:1.7}, we denote by 
$\gamma \in \cD_c (k)$ the joint distribution of a free family
of $k$ centered semicircular elements of variance 1. A fundamental 
property of $\gamma$ is that its $R$-transform is
\begin{equation}     \label{eqn:5.351}
R_{\gamma} (z_1, \ldots , z_k) = z_1^2 + \cdots + z_k^2
\end{equation}
(see e.g. \cite{NS06}, Example 11.21.2 on page 187). More 
generally, for every $t>0$ let $\gamma_t$ denote the 
distribution of a free family of $k$ centered semicircular 
elements of variance $t$. It is immediate that 
\[
R_{\gamma_t} (z_1, \ldots , z_k) = 
R_{\gamma} ( \sqrt{t} z_1, \ldots , \sqrt{t} z_k )
= t( z_1^2 + \cdots + z_k^2);
\]
hence $R_{\gamma_t} = t R_{\gamma}$, which shows that 
$\gamma_t = \gamma^{\boxplus t}$ for every $t > 0$.
\end{remark}

\begin{proposition}   \label{prop:5.4}
Let $\nu$ be a distribution in $\dalg (k)$. One has that
\begin{equation}   \label{eqn:5.71}
R_{\gamma \boxright \nu} (z_1, \ldots , z_k) = \sum_{i=1}^k 
z_i \bigl( 1 + M_{\nu} (z_1, \ldots , z_k) \bigr) z_i .
\end{equation}
\end{proposition}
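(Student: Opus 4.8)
The plan is to prove the proposition by direct substitution into the defining formula for $\boxright$, exploiting the especially simple shape of $R_{\gamma}$. Indeed, Proposition \ref{prop:5.4} is nothing but the special case $\mu = \gamma$ of Definition \ref{def:1.1}, made concrete through the fact (recorded in Equation (\ref{eqn:5.351})) that $R_{\gamma} = z_1^2 + \cdots + z_k^2$.

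First I would invoke Equation (\ref{eqn:1.11}) with $\mu = \gamma$, which reads
\[
R_{\gamma \boxright \nu} = R_{\gamma} \bigl( \, z_1 (1+M_{\nu}), \ldots , z_k (1+M_{\nu}) \, \bigr) \cdot \bigl( 1+M_{\nu} \bigr)^{-1}.
\]
Here the substitution is legitimate as a composition of formal power series in $\bC \langle \langle z_1, \ldots , z_k \rangle \rangle$, since each series $z_i (1+M_{\nu})$ has vanishing constant term (its lowest-degree contribution is $z_i$, because $M_{\nu}$ has no constant term). Next I would plug the explicit form $R_{\gamma}(w_1, \ldots , w_k) = \sum_{i=1}^k w_i^2$ into this, taking $w_i = z_i (1+M_{\nu})$, to obtain
\[
R_{\gamma} \bigl( \, z_1 (1+M_{\nu}), \ldots , z_k (1+M_{\nu}) \, \bigr) = \sum_{i=1}^k z_i (1+M_{\nu}) z_i (1+M_{\nu}).
\]
Multiplying this on the right by $(1+M_{\nu})^{-1}$ cancels the rightmost factor $(1+M_{\nu})$ in each summand, leaving $\sum_{i=1}^k z_i (1+M_{\nu}) z_i$, which is precisely the right-hand side of (\ref{eqn:5.71}).

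The computation is entirely routine; the only point requiring a little care is noncommutativity. Since $M_{\nu}$ and the indeterminates $z_i$ do not commute in $\bC \langle \langle z_1, \ldots , z_k \rangle \rangle$, one must preserve the order of the factors as displayed and carry out the cancellation $(1+M_{\nu}) (1+M_{\nu})^{-1} = 1$ only at the right end of each summand, never in the middle. There is no genuine obstacle here: the entire substance of the statement is carried by the quadratic form of $R_{\gamma}$, and the proof amounts to a one-line specialization of Definition \ref{def:1.1}.
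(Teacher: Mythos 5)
Your proof is correct, and it takes a genuinely different route from the paper's. The paper proves Proposition \ref{prop:5.4} combinatorially: it invokes Proposition \ref{prop:3.3} to write $\cf_{(i_1,\ldots,i_n)}(R_{\gamma\boxright\nu})$ as a sum over partitions $\pi\in NC(n)$ with $\pi\leqleq 1_n$, coloured by $\oo_\pi$; it then observes that the quadratic form of $R_\gamma$ forces the outer block to be exactly $\{1,n\}$ (killing all other terms and producing the factor $\delta_{i_1,i_n}$), identifies the surviving sum with a sum over $NC(n-2)$, and resums it into the moment $\nu(X_{i_2}\cdots X_{i_{n-1}})$ via the free moment-cumulant formula (\ref{eqn:2.141}), handling the coefficients of length $1$ and $2$ separately. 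You instead specialize Definition \ref{def:1.1} directly: substituting $R_\gamma(w_1,\ldots,w_k)=\sum_i w_i^2$ with $w_i=z_i(1+M_\nu)$ gives $\sum_i z_i(1+M_\nu)z_i(1+M_\nu)$, and right multiplication by $(1+M_\nu)^{-1}$ cancels only the rightmost factor, leaving $\sum_i z_i(1+M_\nu)z_i$. This is legitimate: the paper's own expansion in Remark \ref{rem:3.1} (Equation (\ref{eqn:3.12})) confirms that substitution into a noncommutative series acts monomial-by-monomial exactly as you use it, and your attention to the order of factors is the only point of care. Your argument is shorter and more elementary; what the paper's longer route buys is a demonstration of the combinatorial machinery of Section 3 in action (and an implicit consistency check between Definition \ref{def:1.1} and Proposition \ref{prop:3.3}), but it is not logically needed for this statement.
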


\begin{proof}
For $n \geq 3$ and $1 \leq i_1, \ldots , i_n \leq k$ one has
\[
\cf_{ (i_1, \ldots , i_n) } \bigl( R_{\gamma \boxright \nu} \bigr)
= \sum_{ \begin{array}{c}
{\scriptstyle \pi \in NC(n),}  \\
{\scriptstyle \pi \leqleq 1_n} 
\end{array} } \ \cf_{ (i_1, \ldots , i_n); \pi } 
( R_{\gamma}, R_{\nu} ) \ 
\ \mbox{ (by Proposition \ref{prop:3.3}) }                  
\]
\[
= \sum_{ \begin{array}{c}
{\scriptstyle \pi \in NC(n) \ such}  \\
{\scriptstyle that \ \{ 1,n \} \in \pi} 
\end{array} } \ \ \delta_{i_1, i_n} \cdot 
\prod_{  \begin{array}{c}
{\scriptstyle W \in \pi}  \\
{\scriptstyle W \neq \{ 1,n \} }
\end{array} } \ \cf_{ (i_1, \ldots , i_n) | W } ( R_{\nu} ) \ 
\ \mbox{ (because of the special form of $R_{\gamma}$). }          
\]
But the set of partitions $\pi \in NC(n)$ which have $\{ 1,n \}$ 
as a block is in natural bijection with $NC(n-2)$; when we follow
through with this bijection, the above sequence of equalities 
is continued with
\begin{align*}
{  }
& = \delta_{i_1, i_n} \cdot \sum_{\rho \in NC(n-2)} \
\prod_{W \in \rho} \
\cf_{ (i_2, \ldots , i_{n-1}) | W } ( R_{\nu} )                  \\
& = \delta_{i_1, i_n} \cdot \nu (X_{i_2} \cdots X_{i_{n-1}} )
\ \ \mbox{ (by the moment-cumulant formula (\ref{eqn:2.141})) }  \\
& = \cf_{ (i_1, \ldots , i_n) } \Bigl( \, \sum_{i=1}^k
z_i \bigl( 1 + M_{\nu} (z_1, \ldots , z_k) \bigr) z_i \, \Bigr) .
\end{align*}

The above calculation shows that the series on the two sides of 
Equation (\ref{eqn:5.71}) have identical coefficients of length 
$\geq 3$. It is immediately verified that the coefficients of 
length 1 and 2 also coincide (each of the two series has 
vanishing linear part and quadratic part equal to 
$\sum_{i=1}^k z_i^2$), and this completes the proof.
\end{proof}

\begin{corollary}   \label{cor:5.5}
The transformation $\Phi : \dalg (k) \to \dalg (k)$ from 
\cite{BN09} satisfies
\begin{equation}    \label{eqn:5.51}
\gamma \boxright \nu = \bB \bigl( \, \Phi ( \nu ) \, \bigr),
\ \ \forall \, \nu \in \dalg (k).
\end{equation}
\end{corollary}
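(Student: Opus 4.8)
The plan is to recognize that Corollary \ref{cor:5.5} is essentially a restatement of Theorem \ref{thm:1.8}, and that the substantive content has already been packaged into Proposition \ref{prop:5.4}. So the proof should be short: it consists of matching the explicit formula for $R_{\gamma \boxright \nu}$ from (\ref{eqn:5.71}) against the known description of $\Phi$ from \cite{BN09}, via the Boolean Bercovici-Pata bijection $\bB$. First I would recall that $\bB$ is defined by $R_{\bB(\sigma)} = \eta_{\sigma}$ (Equation (\ref{eqn:1.226})), so that the claimed identity $\gamma \boxright \nu = \bB(\Phi(\nu))$ is equivalent to the equality of series $R_{\gamma \boxright \nu} = \eta_{\Phi(\nu)}$. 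Thus the goal reduces to verifying that the right-hand side of (\ref{eqn:5.71}) equals the $\eta$-series of $\Phi(\nu)$.

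Next I would invoke the description of $\Phi$ coming from \cite{BN09}. The transformation $\Phi$ is characterized there through its effect on the relevant series; the key fact I expect to need is the identity expressing $\eta_{\Phi(\nu)}$ (equivalently $R_{\Phi(\nu)}$ together with the passage to $\eta$) in terms of $M_{\nu}$. Concretely, I anticipate that \cite{BN09} provides a formula of the shape $\eta_{\Phi(\nu)}(z_1, \ldots, z_k) = \sum_{i=1}^k z_i (1 + M_{\nu}) z_i$, or something from which this follows by a one-line manipulation. Once that formula is in hand, the proof is immediate: the right-hand side of (\ref{eqn:5.71}) is literally $\eta_{\Phi(\nu)}$, so $R_{\gamma \boxright \nu} = \eta_{\Phi(\nu)} = R_{\bB(\Phi(\nu))}$, and since a distribution in $\dalg(k)$ is uniquely determined by its $R$-transform, we conclude $\gamma \boxright \nu = \bB(\Phi(\nu))$.

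The structure of the argument is therefore: (1) translate the claim through $\bB$ into an identity of $\eta$-series; (2) cite the defining property of $\Phi$ from \cite{BN09} to identify $\eta_{\Phi(\nu)}$ with $\sum_i z_i(1+M_{\nu})z_i$; (3) compare with Proposition \ref{prop:5.4} and conclude by uniqueness of the $R$-transform. The main obstacle I foresee is purely bibliographic rather than mathematical: one must pin down the precise normalization and the exact form in which \cite{BN09} records the series attached to $\Phi(\nu)$, since the whole proof hinges on matching (\ref{eqn:5.71}) to that recorded formula term for term. If \cite{BN09} instead describes $\Phi$ via $R_{\Phi(\nu)}$ rather than $\eta_{\Phi(\nu)}$, or via a free Brownian motion / cumulant characterization, then a short extra step converting between $R$ and $\eta$ (using $\eta = M(1+M)^{-1}$ and the moment-cumulant relations of Remark \ref{rem:2.14}) will be required, but this is routine given the machinery already set up in Section 2.
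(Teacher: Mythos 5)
Your proposal is correct and follows essentially the same route as the paper: the paper's proof likewise cites the defining prescription from \cite{BN09} that $\eta_{\Phi(\nu)} = \sum_{i=1}^k z_i\bigl(1+M_{\nu}\bigr)z_i$, matches this against Proposition \ref{prop:5.4}, and concludes via the definition of $\bB$ (that $R_{\bB(\sigma)} = \eta_{\sigma}$). Your anticipated form of the \cite{BN09} formula is exactly the one the paper records as Equation (\ref{eqn:5.81}), so the bibliographic caveat you raise does not in fact require any extra conversion step.
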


\begin{proof}
In \cite{BN09} the distribution $\Phi ( \nu )$ is defined via
the prescription that its $\eta$-series is
\begin{equation}    \label{eqn:5.81}
\eta_{\Phi ( \nu )} (z_1, \ldots , z_k) = \sum_{i=1}^k
z_i \bigl( 1 + M_{\nu} (z_1, \ldots , z_k) \bigr) z_i.
\end{equation}
Comparing this to Proposition \ref{prop:5.4} we see that
$\eta_{\Phi ( \nu )}$ coincides with the $R$-transform of 
$\gamma \boxright \nu$, and Equation (\ref{eqn:5.51}) follows.
\end{proof}

$\ $

It is worth noting that the two main 
facts proved about $\Phi$ in \cite{BN09} can be easily obtained
from the prespective of subordination distributions, as 
explained in the next proposition. (The two statements of this
proposition originally appeared as Theorem 6.2 and respectively 
as Corollary 7.10 in \cite{BN09}.)

\begin{proposition}    \label{prop:5.6}
$1^o$ For every $\nu \in \dalg (k)$ and $t > 0$ one has that 
\begin{equation}   \label{eqn:5.91}
\Phi ( \nu \boxplus \gamma_t ) = \bB_t ( \, \Phi ( \nu ) \, ).
\end{equation}

$2^o$ The transformation $\Phi$ maps the subset $\cD_c (k)$ of
$\dalg (k)$ into itself.
\end{proposition}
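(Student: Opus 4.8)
The plan is to read off both statements from the alternative description of $\Phi$ provided by Corollary \ref{cor:5.5}, namely $\Phi ( \nu ) = \bB^{-1} ( \gamma \boxright \nu )$, and to feed it into the evolution formula of Proposition \ref{prop:5.1} together with the infinite-divisibility results already established.

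For part $1^o$, I would first use this description together with the identity $\gamma_t = \gamma^{\boxplus t}$ from Remark \ref{rem:5.35} to write
\[
\Phi ( \nu \boxplus \gamma_t ) = \bB^{-1} \bigl( \, \gamma \boxright ( \gamma^{\boxplus t} \boxplus \nu ) \, \bigr),
\]
where commutativity of $\boxplus$ has been used to reorder $\nu \boxplus \gamma^{\boxplus t}$. Applying Proposition \ref{prop:5.1} with $\mu = \gamma$ turns the inner subordination distribution into $\bB_t ( \gamma \boxright \nu )$, so that $\Phi ( \nu \boxplus \gamma_t ) = \bB^{-1} \bigl( \bB_t ( \gamma \boxright \nu ) \bigr)$. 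It then remains only to commute $\bB^{-1}$ past $\bB_t$: the semigroup property $\bB_{s+t} = \bB_s \circ \bB_t$ from Remark \ref{rem:1.7} shows that the $\bB_t$ commute pairwise, so in particular each $\bB_t$ commutes with $\bB = \bB_1$ and hence also with $\bB^{-1}$. This gives $\bB^{-1} ( \bB_t ( \gamma \boxright \nu ) ) = \bB_t ( \bB^{-1} ( \gamma \boxright \nu ) ) = \bB_t ( \Phi ( \nu ) )$, which is (\ref{eqn:5.91}).

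For part $2^o$, let $\nu \in \cD_c (k)$; I must show $\Phi ( \nu ) = \bB^{-1} ( \gamma \boxright \nu ) \in \cD_c (k)$. Since $\gamma^{\boxplus t} = \gamma_t \in \cD_c (k)$ for all $t > 0$, the distribution $\gamma$ is itself $\boxplus$-infinitely divisible, so Corollary \ref{cor:4.11}.1 (applied with $\mu = \gamma$) shows that $\gamma \boxright \nu$ is $\boxplus$-infinitely divisible; moreover $\gamma \boxright \nu \in \cD_c (k)$ by Corollary \ref{cor:1.6}. I would then invoke the property recalled in Remark \ref{rem:1.2}.2 that $\bB$ maps $\cD_c (k)$ \emph{onto} the set of $\boxplus$-infinitely divisible distributions in $\cD_c (k)$; hence $\gamma \boxright \nu$ lies in $\bB ( \cD_c (k) )$, which is exactly the assertion that $\Phi ( \nu ) = \bB^{-1} ( \gamma \boxright \nu )$ belongs to $\cD_c (k)$.

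Both arguments are short assemblies of previously proved facts, exactly as anticipated in Remark \ref{rem:1.9}.1. The only step needing a moment of care is the commutation of $\bB^{-1}$ with $\bB_t$ in part $1^o$, but this is immediate from the semigroup structure; the substantive work sits in Proposition \ref{prop:5.1} and Corollary \ref{cor:4.11}, so I anticipate no real obstacle beyond correctly lining up these ingredients.
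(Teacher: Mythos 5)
Your proposal is correct and follows essentially the same route as the paper: both rest on the description $\Phi(\nu) = \bB^{-1}(\gamma \boxright \nu)$ from Corollary \ref{cor:5.5}, the evolution formula of Proposition \ref{prop:5.1} with $\mu = \gamma$, the identity $\gamma^{\boxplus t} = \gamma_t$, and (for part $2^o$) Corollary \ref{cor:4.11}.1 together with the characterization of $\bB(\cD_c(k))$ as the $\boxplus$-infinitely divisible distributions in $\cD_c(k)$. The only cosmetic difference is that you commute $\bB^{-1}$ past $\bB_t$ directly (correctly justified via the semigroup property and bijectivity of $\bB$), whereas the paper applies $\bB$ to both sides and concludes by injectivity of $\bB$.
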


\begin{proof} $1^o$ Since the Boolean Bercovici-Pata bijection 
is one-to-one on $\dalg (k)$, it will suffice to prove that 
\[
\bB \bigl( \, \Phi ( \nu \boxplus \gamma_t ) \, \bigr) = 
\bB \bigl( \, \bB_t ( \, \Phi ( \nu ) \, ) \, \bigr) .
\]
And indeed, starting from the right-hand side of the above 
equation we can go as follows:
\begin{align*}
\bB \bigl( \, \bB_t ( \Phi ( \nu ) ) \, \bigr) 
& = \bB_t \bigl( \, \bB ( \Phi ( \nu ) ) \, \bigr) &   \mbox{ 
 (because $\bB \circ \bB_t = \bB_{t+1} = \bB_t \circ \bB$) }  \\
& = \bB_t \bigl( \, \gamma \boxright \nu \, \bigr) \ 
      &   \mbox{ (by Corollary \ref{cor:5.5}) }               \\
& = \gamma \boxright \bigl( \, \gamma^{\boxplus t} 
    \boxplus \nu \, \bigr) 
     &   \mbox{ (by Proposition \ref{prop:5.1}) }             \\
& = \gamma \boxright ( \nu \boxplus \gamma_t ) 
      &  \mbox{ (because $\gamma^{\boxplus t} = \gamma_t$) }  \\
& = \bB \bigl( \, \Phi ( \nu \boxplus \gamma_t ) \, \bigr) 
      &  \mbox{ (by Corollary \ref{cor:5.5}). } 
\end{align*}

$2^o$ Since $\bB$ is one-to-one, it will suffice to show that 
for $\nu \in \cD_c (k)$ one has $\bB ( \, \Phi ( \nu ) \, ) \in$
$\bB \bigl( \, \cD_c (k) \, \bigr)$. The latter 
set is precisely the set of distributions in $\cD_c (k)$ which 
are $\boxplus$-infinitely divisible (cf. Theorem 1 in 
\cite{BN08}). In view of (\ref{eqn:5.51}), what we have thus to 
prove is the implication
``$\nu \in \cD_c (k) \ \Rightarrow \ \gamma \boxright \nu$
is infinitely divisible''. But $\gamma$ is itself infinitely 
divisible (since $\gamma^{\boxplus t} = \gamma_t \in \cD_c (k)$,
$\forall \, t > 0$), so the required implication follows
from Corollary \ref{cor:4.11}.1.
\end{proof}

$\ $

$\ $

\begin{center}
{\bf\large 6. Properties originating from functional equations}
\end{center}
\setcounter{section}{6}
\setcounter{equation}{0}
\setcounter{theorem}{0}

\begin{remark}   \label{rem:6.1} 
In this remark we briefly return to the 1-variable framework and 
notations from Section 2A, and review the two functional equations
that are to be extended to multi-variable framework.
Recall in particular that for a probability measure $\mu$ on $\bR$,
$F_{\mu} : \bC^{+} \to \bC^{+}$ denotes the reciprocal Cauchy 
transform of $\mu$. In the case when $\mu$ is compactly supported
$F_{\mu} (z)$ can be viewed as a Laurent series in $z$, related to 
the $\eta$-series of $\mu$ by the formula
\begin{equation}     \label{eqn:6.05}
F_{\mu} (z) = z \Bigl( \, 1 - \eta_{\mu} \bigl( \frac{1}{z} \bigr) 
                       \, \Bigr).
\end{equation}
In order to verify (\ref{eqn:6.05}), one writes 
$F_{\mu} = 1/G_{\mu}$, $\eta_{\mu} = M_{\mu}/(1+ M_{\mu})$, and uses 
the relation between $M_{\mu}$ and $G_{\mu}$ that was recorded in 
Equation (\ref{eqn:2.22}) in Section 2A.

\vspace{6pt}

$1^o$ Let $\mu, \nu$ be two probability measures on $\bR$, and let
$\omega_1 , \omega_2$ be the subordination functions of 
$\mu \boxplus \nu$ with respect to $\mu$ and to $\nu$, respectively.
A remarkable equation satisfied by these functions (see e.g.
Theorem 4.1 in \cite{BB07}) is that 
\begin{equation}    \label{eqn:6.11}
\omega_1 (z) + \omega_2 (z) = z + F_{\mu \boxplus \nu} (z), 
\ \ z \in \bC^{+}.
\end{equation}
But $\omega_1 = F_{\nu \boxright \mu}$ and 
$\omega_2 = F_{\mu \boxright \nu}$, hence (\ref{eqn:6.11}) amounts to
\begin{equation}    \label{eqn:6.12}
F_{\mu \boxright \nu} (z) + F_{\nu \boxright \mu} (z)
= z + F_{\mu \boxplus \nu} (z), \ \ z \in \bC^{+}.
\end{equation}
Let us moreover replace the reciprocal Cauchy transforms in 
(\ref{eqn:6.12}) by $\eta$-series, by using Equation 
(\ref{eqn:6.05}). Then (\ref{eqn:6.12}) becomes
\[
\eta_{\mu \boxright \nu} + \eta_{\nu \boxright \mu} 
= \eta_{\mu \boxplus \nu} , 
\]
and in this form it goes through to the multi-variable framework 
of $\dalg (k)$, as shown in Proposition \ref{prop:6.2} below.

\vspace{6pt}

$2^o$ Let $\nu$ be a probability measure on $\bR$. Then for 
every $p \geq 1$ one can consider the probability measure 
$\nu^{\boxplus p}$, and in Theorem 2.5 of \cite{BB04} it was shown 
that one has 
\begin{equation}    \label{eqn:6.13}
G_{\nu^{\boxplus p}} (z) = G_{\nu} \Bigl( \,
\frac{1}{p} z + \bigl( 1 - \frac{1}{p} \bigr) 
F_{\nu^{\boxplus p}} (z) \, \Bigr), \ \ z \in \bC^{+}.
\end{equation}
In other words, Equation (\ref{eqn:6.13}) says that the Cauchy
transform of $\nu^{\boxplus p}$ is subordinated to the one of 
$\nu$, with subordination function $\omega$ defined by 
\begin{equation}   \label{eqn:6.14}
\omega (z) = \frac{1}{p} z + \bigl( 1 - \frac{1}{p} \bigr)
F_{\nu^{\boxplus p}} (z) , \ \ z \in \bC^{+} .
\end{equation}
It is immediate that $\omega$ from (\ref{eqn:6.14}) belongs to 
the set $\fF$ of reciprocal Cauchy transforms from Equation
(\ref{eqn:2.13}) of Section 2A, hence there exists a unique 
probability measure $\sigma$ on $\bR$ such that $F_{\sigma} = \omega$.
It is natural to call this $\sigma$ the ``subordination distribution
of $\nu^{\boxplus p}$ with respect to $\nu$''. (If $p \geq 2$ then
$\sigma$ is just $\nu^{\boxplus (p-1)} \boxright \nu$, but for 
$1 \leq p < 2$ this point of view doesn't always work, as the
probability measure $\nu^{\boxplus (p-1)}$ might not be defined.)
So then Equation (\ref{eqn:6.14}) becomes
\[
F_{\sigma} (z) = \frac{1}{p} z + \Bigl( 1 - \frac{1}{p} \bigr)
F_{\nu^{\boxplus p}} (z) , \ \ z \in \bC^{+} ,
\]
and upon writing the reciprocal Cauchy transforms in terms of 
$\eta$-series this takes us to 
\begin{equation}    \label{eqn:6.15} 
\eta_{\sigma} = \frac{p-1}{p} \cdot \eta_{ \nu^{\boxplus p} } .
\end{equation}
This latter formula is the one that will be extended to the 
framework of $\cD_c (k)$ -- see Corollary \ref{cor:6.4} and
Remark \ref{rem:6.5} below.
\end{remark}

\begin{proposition}    \label{prop:6.2}
For every $\mu, \nu \in \dalg (k)$ one has that 
\begin{equation}  \label{eqn:3.121}
\eta_{\mu \boxplus \nu} =  \eta_{\mu \boxright \nu}  +
\eta_{\nu \boxright \mu} .
\end{equation}
\end{proposition}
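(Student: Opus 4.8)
The plan is to verify (\ref{eqn:3.121}) coefficient-by-coefficient, reducing both sides to sums over non-crossing partitions via the moment-cumulant machinery assembled in Sections 2 and 3. I fix $n \geq 1$ and indices $1 \leq i_1, \ldots, i_n \leq k$, and compare the coefficients $\cf_{(i_1, \ldots, i_n)}$ of the three $\eta$-series appearing in the identity.

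For the left-hand side, I would start from formula (\ref{eqn:2.143}), which expresses Boolean cumulants in terms of free cumulants, applied to $\mu \boxplus \nu$. Since the $R$-transform linearizes $\boxplus$ (so $R_{\mu \boxplus \nu} = R_\mu + R_\nu$), this writes $\cf_{(i_1, \ldots, i_n)}(\eta_{\mu \boxplus \nu})$ as a sum over $\pi \in NC(n)$ with $\pi \leqleq 1_n$ of the generalized coefficients $\cf_{(i_1, \ldots, i_n); \pi}(R_\mu + R_\nu)$. Expanding each of these by the additivity formula (\ref{eqn:2.121}) turns the right-hand side into a double sum, over all such $\pi$ and over all colourings $c : \pi \to \{1,2\}$, of the mixed coefficients $\cf_{(i_1, \ldots, i_n); \pi; c}(R_\mu, R_\nu)$.

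For the right-hand side, I would invoke Proposition \ref{prop:3.7} in its form (\ref{eqn:3.71}). Writing $V_o$ for the unique outer block of $\pi$ (which exists precisely because $\pi \leqleq 1_n$, cf. Remark \ref{rem:2.6}), this gives $\cf_{(i_1, \ldots, i_n)}(\eta_{\mu \boxright \nu})$ as the sum over the same $\pi$'s, but restricted to colourings with $c(V_o) = 1$, of $\cf_{(i_1, \ldots, i_n); \pi; c}(R_\mu, R_\nu)$. Applying the same formula with $\mu$ and $\nu$ interchanged expresses $\cf_{(i_1, \ldots, i_n)}(\eta_{\nu \boxright \mu})$ as a like sum of coefficients $\cf_{(i_1, \ldots, i_n); \pi; c}(R_\nu, R_\mu)$ over colourings with $c(V_o) = 1$. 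The key device is the \emph{colour-swap} bijection $c \mapsto c'$ with $c'(V) = 3 - c(V)$: directly from the definition (\ref{eqn:2.114}) of mixed coefficients one has $\cf_{(i_1, \ldots, i_n); \pi; c}(R_\nu, R_\mu) = \cf_{(i_1, \ldots, i_n); \pi; c'}(R_\mu, R_\nu)$, while the constraint $c(V_o) = 1$ is transported to $c'(V_o) = 2$.

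Finally I would add the two $\boxright$ contributions. For each fixed $\pi$, the colourings with $c(V_o) = 1$ and those with $c(V_o) = 2$ partition the full set of colourings of $\pi$; hence the two summands together reconstitute exactly $\sum_{c : \pi \to \{1,2\}} \cf_{(i_1, \ldots, i_n); \pi; c}(R_\mu, R_\nu)$, which is precisely the expression already obtained for the left-hand side. Comparing coefficients then yields (\ref{eqn:3.121}). The only genuinely delicate point I anticipate is the bookkeeping in the colour-swap step: one must check that interchanging the two series arguments in the mixed coefficient is equivalent to swapping the two colour values, and that this correctly pairs the outer-block constraint $c(V_o) = 1$ of one term against $c(V_o) = 2$ of the other. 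Once that correspondence is settled, the identity is immediate.
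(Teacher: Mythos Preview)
Your proposal is correct and follows essentially the same approach as the paper: expand $\cf_{(i_1,\ldots,i_n)}(\eta_{\mu\boxplus\nu})$ via (\ref{eqn:2.143}) and (\ref{eqn:2.121}) into a double sum over $\pi\leqleq 1_n$ and all colourings, then split according to the value of $c$ on the outer block and identify the two pieces via Proposition~\ref{prop:3.7}. The only difference is expository: you spell out the colour-swap bijection $c\mapsto 3-c$ explicitly, whereas the paper simply remarks that $\Sigma_2$ is ``the counterpart of $\Sigma_1$ with the roles of $\mu$ and $\nu$ reversed''.
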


\begin{proof} 
We fix $n \in \bN$ and $1 \leq i_1, \ldots , i_n \leq k$ and compare
the coefficients of $z_{i_1} \cdots z_{i_n}$ for the series on the 
two sides of Equation (\ref{eqn:3.121}). By using the relation 
between $R$ and $\eta$ and the linearizing property of $R$, and 
then by invoking Equation (\ref{eqn:2.121}) in Remark \ref{rem:2.12}
we find that
\begin{align*} 
\cf_{ (i_1, \ldots , i_n) } \bigl( \eta_{\mu \boxplus \nu} \bigr)
& = \sum_{ \begin{array}{c} 
       {\scriptstyle \pi \in NC(n),}  \\
       {\scriptstyle \pi \leqleq 1_n} \end{array}  } 
       \ \cf_{ (i_1, \ldots , i_n); \pi } (R_{\mu} + R_{\nu})  \\ 
& = \sum_{ \begin{array}{c} 
       {\scriptstyle \pi \in NC(n),}  \\
       {\scriptstyle \pi \leqleq 1_n} \end{array}  } \
       \ \sum_{ c: \pi \to \{ 1,2 \} } \
       \ \cf_{ (i_1, \ldots , i_n); \pi ;c } (R_{\mu}, R_{\nu}).
\end{align*}  
In the latter double sum, the colourings $c$ of $\pi$ can be 
subdivided according to whether $c(V_0) =1$ or $c(V_0) =2$, where 
$V_0$ is the unique outer block of $\pi$. This leads to an 
equality of the form
\[
\cf_{ (i_1, \ldots , i_n) } \bigl( \eta_{\mu \boxplus \nu} \bigr)
= \Sigma_1 + \Sigma_2,
\]
where $\Sigma_1$ is exactly as on the right-hand side of Equation 
(\ref{eqn:3.71}) from Proposition \ref{prop:3.7}, and $\Sigma_2$
is the counterpart of $\Sigma_1$ with the roles of $\mu$ and $\nu$
being reversed. We are only left to invoke Proposition 
\ref{prop:3.7} to conclude that 
\[
\Sigma_1 + \Sigma_2
=  \cf_{ (i_1, \ldots , i_n) } ( \eta_{\mu \boxright \nu} )
    + \cf_{ (i_1, \ldots , i_n) } ( \eta_{\nu \boxright \mu} )  
=  \cf_{ (i_1, \ldots , i_n) } \bigl( \eta_{\mu \boxright \nu} 
+ \eta_{\nu \boxright \mu} \bigr) ,
\]
and (\ref{eqn:3.121}) follows.
\end{proof}

$\ $

When discussing the multi-variable analogue for Equation
(\ref{eqn:6.15}) it is convenient to note that there 
is no problem to generally talk about the 
``subordination distribution of $\lambda$ with respect to $\nu$''
for any $\lambda, \nu \in \dalg (k)$.

\begin{definition}    \label{def:6.3}
Let two distributions $\lambda , \nu \in \dalg (k)$ be given. 
Consider the distribution $\mu \in \dalg (k)$ which is uniquely 
determined by the requirement that 
\begin{equation}   \label{eqn:6.31}
R_{\mu} = R_{\lambda} - R_{\nu}
\end{equation}
(or equivalently, via the requirement that 
$\mu \boxplus \nu = \lambda$). Then the {\em subordination
distribution of $\lambda$ with respect to $\nu$} is, by definition,
equal to $\mu \boxright \nu$.
\end{definition}

\begin{corollary}   \label{cor:6.4}

$1^o$ For every $\nu \in \dalg (k)$ and every $p \geq 1$, the 
subordination distribution of $\nu^{\boxplus p}$ with respect 
to $\nu$ is equal to $\bigl( \bB ( \nu ) \bigr)^{\boxplus (p-1)}$.

$2^o$ Let $\nu$ be a distribution in $\cD_c (k)$. Then, for every 
$p \geq 1$, the subordination distribution of $\nu^{\boxplus p}$ 
with respect to $\nu$ belongs to $\cD_c (k)$ as well, and is 
moreover $\boxplus$-infinitely divisible. 
\end{corollary}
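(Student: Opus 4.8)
The plan is to deduce both parts directly from Proposition~\ref{prop:5.3} and the characterization of $\bB(\cD_c(k))$ recorded in Remark~\ref{rem:1.2}; no new estimates should be needed.

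First I would unwind Definition~\ref{def:6.3} in the case $\lambda = \nu^{\boxplus p}$. Since the $R$-transform linearizes $\boxplus$ one has $R_{\nu^{\boxplus p}} = p\,R_\nu$, so the auxiliary distribution $\mu$ determined by $R_\mu = R_{\nu^{\boxplus p}} - R_\nu$ satisfies $R_\mu = (p-1)R_\nu$; that is, $\mu = \nu^{\boxplus(p-1)}$, a convolution power legitimately defined in $\dalg(k)$ for the real exponent $p-1 \geq 0$. Hence the subordination distribution of $\nu^{\boxplus p}$ with respect to $\nu$ is precisely $\nu^{\boxplus(p-1)} \boxright \nu$.

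Next I would invoke Proposition~\ref{prop:5.3} with $\mu$ there taken to be $\nu$, and with $s := p-1$, $t := 1$, yielding
\[
\bigl( \nu^{\boxplus(p-1)} \bigr) \boxright \bigl( \nu^{\boxplus 1} \bigr)
= \bigl( \bB_1(\nu) \bigr)^{\boxplus(p-1)} .
\]
Because $\nu^{\boxplus 1} = \nu$ and $\bB_1 = \bB$ is the Boolean Bercovici--Pata bijection (Remark~\ref{rem:1.7}), the right-hand side is $\bigl( \bB(\nu) \bigr)^{\boxplus(p-1)}$, which is exactly the claim of part $1^o$.

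For part $2^o$ I would feed the formula from $1^o$ into the two properties of $\bB$ from Remark~\ref{rem:1.2}: $\bB$ carries $\cD_c(k)$ into itself, and $\bB(\cD_c(k))$ is exactly the set of $\boxplus$-infinitely divisible distributions in $\cD_c(k)$. Thus for $\nu \in \cD_c(k)$ the distribution $\rho := \bB(\nu)$ lies in $\cD_c(k)$ and is infinitely divisible, so $\rho^{\boxplus s} \in \cD_c(k)$ for all $s \geq 0$; taking $s = p-1$ places the subordination distribution in $\cD_c(k)$. Infinite divisibility then follows because any nonnegative $\boxplus$-power of an infinitely divisible element is again infinitely divisible: for every $t > 0$ one has $(\rho^{\boxplus(p-1)})^{\boxplus t} = \rho^{\boxplus((p-1)t)} \in \cD_c(k)$ (the case $p=1$ being trivial, since $\rho^{\boxplus 0} = \delta$). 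I expect the only real obstacle to be bookkeeping — correctly identifying the hidden distribution $\mu = \nu^{\boxplus(p-1)}$ in Definition~\ref{def:6.3} and matching the exponents in Proposition~\ref{prop:5.3} so that $t=1$ collapses $\bB_t$ to $\bB$; once that alignment is made, both parts are immediate.
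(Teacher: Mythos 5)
Your proposal is correct and follows essentially the same route as the paper: identify the subordination distribution of $\nu^{\boxplus p}$ with respect to $\nu$ as $\nu^{\boxplus(p-1)} \boxright \nu$ via Definition \ref{def:6.3}, apply Proposition \ref{prop:5.3} with $s=p-1$, $t=1$, and then deduce part $2^o$ from the facts that $\bB(\nu)$ is $\boxplus$-infinitely divisible and that its nonnegative convolution powers therefore remain in $\cD_c(k)$ and are infinitely divisible. Your extra bookkeeping (the $R$-transform computation identifying $\mu = \nu^{\boxplus(p-1)}$, and the $p=1$ edge case) only makes explicit what the paper leaves implicit.
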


\begin{proof} $1^o$ According to Definition \ref{def:6.3}, the 
distribution in question is $\nu^{\boxplus (p-1)} \boxright \nu$. 
Thus we only need to invoke the particular case of Proposition 
\ref{prop:5.3} where $s = p-1$ and $t=1$.

$2^o$ This follows from part $1^o$ of the corollary and the fact
that $\bB ( \mu )$ is $\boxplus$-infinitely divisible (which 
implies that any convolution power 
$\bigl( \bB ( \nu ) \bigr)^{\boxplus t}$, $t \geq 0$,
lives in $\cD_c (k)$ and is itself infinitely divisible).
\end{proof}

\begin{remark}    \label{rem:6.5}
It is an easy exercise (left to the reader) to verify 
the identity 
\begin{equation}   \label{eqn:6.51}
\bigl( \bB ( \nu ) \bigr)^{\boxplus (p-1)} 
= \bigl( \nu^{\boxplus p} \bigr)^{\uplus (p-1)/p} , \ \ 
\forall \, \nu \in \dalg (k), \ \forall \, p \in [ 1, \infty ).
\end{equation}
So if we denote the subordination distribution of $\nu^{\boxplus p}$
with respect to $\nu$ by $\sigma$, then by invoking Corollary 
\ref{cor:6.4} and by taking the $\eta$-series of the distribution
on the right-hand side of (\ref{eqn:6.51}) we obtain that 
$\eta_{\sigma}$ = 
$\bigl( (p-1)/p \bigr) \cdot \eta_{ \nu^{\boxplus p} }$.
Thus Corollary \ref{cor:6.4} gives indeed a multi-variable 
generalization of Equation (\ref{eqn:6.15}) from Remark 
\ref{rem:6.1}.2.
\end{remark}

$\ $

$\ $

$\ $

Alexandru Nica

Department of Pure Mathematics, University of Waterloo,

Waterloo, Ontario N2L 3G1, Canada.

Email: anica@math.uwaterloo.ca


\begin{thebibliography}{99}

\bibitem{A65} N.I. Akhiezer.
{\em The classical moment problem and some related problems in 
analysis}, Hafner Publishing Co, New York, 1965.

\bibitem{A08} M. Anshelevich.
Free evolution on algebras with two states, preprint 2008.
Available at www.arXiv.org under arXiv:0803.4280.

\bibitem{BB04} S.T. Belinschi, H. Bercovici. 
Atoms and regularity for measures in a partially 
defined free convolution semigroup, 
{\em Mathematisches Zeitschrift} 248 (2004), 665-674.

\bibitem{BB07} S.T. Belinschi, H. Bercovici. 
A new approach to subordination results in free probability,
{\em Journal d'Analyse Mathematique} 101 (2007), 357-366.

\bibitem{BN08} S.T. Belinschi, A. Nica.
{$\eta$}-series and a Boolean Bercovici--Pata
bijection for bounded $k$-tuples,
{\em Advances in Mathematics} 217 (2008), 1-41.

\bibitem{BN09} S.T. Belinschi, A. Nica. 
Free Brownian motion and evolution towards 
$\boxplus$-infinite divisibility for $k$-tuples,
to appear in {\em International Journal of Mathematics}.
Available at www.arXiv.org under arXiv:0711.3787.

\bibitem{BV93} H. Bercovici, D. Voiculescu. 
Free convolution of measures with unbounded support,
{\em Indiana University Mathematics Journal}, 42 (1993),
733-773.

\bibitem{BP99} H. Bercovici, V. Pata. 
Stable laws and domains of attraction in free probability theory. 
With an Appendix by P. Biane: The density of free stable 
distributions, {\em Annals of Mathematics} 149 (1999), 1023-1060.

\bibitem{B98} P. Biane. 
Processes with free increments,
{\em Mathematisches Zeitschrift} 227 (1998), 143-174.

\bibitem{BLS96} M. Bozejko, M. Leinert, R. Speicher.
Convolution and limit theorems for conditionally free random 
variables, {\em Pacific Journal of Mathematics} 175 (1996), 
357-388.

\bibitem{L07} R. Lenczewski.
Decompositions of the free additive convolution,
{\em Journal of Functional Analysis} 246 (2007), 330-365.

\bibitem{M92} H. Maassen. 
Addition of freely independent random variables,
{\em Journal of Functional Analysis}  106 (1992), 409--438. 

\bibitem{NS06} A. Nica, R. Speicher.
{\em Lectures on the combinatorics of free probability},
London Mathematical Society Lecture Note Series 335,
Cambridge University Press, 2006.

\bibitem{V86} D. Voiculescu.
Addition of certain non-commuting random variables, 
{\em Journal of Functional Analysis}, 66 (1986), 323-346.

\bibitem{V93} D. Voiculescu.
The analogues of entropy and of Fisher' information measure 
in free probability theory I, {\em Communications in 
Mathematical Physics} 155 (1993), 71-92.

\bibitem{V00} D. Voiculescu.
The coalgebra of the free difference quotient and free 
probability, {\em International Mathematics Research Notices}
2/2000 (2000), 79-106.

\bibitem{VDN92} D.V. Voiculescu, K.J. Dykema, A. Nica.
{\em Free random variables},
CRM Monograph Series 1, American Mathematical Society, 1992.

\end{thebibliography}
\end{document}